\documentclass[a4paper,11pt]{amsart}

\usepackage[T1]{fontenc}
\usepackage{amsmath,amssymb,amsthm,mathtools}
\usepackage{mathrsfs}
\usepackage{xcolor}
\usepackage{cite}
\usepackage[colorlinks=true,linkcolor=blue,citecolor=blue,urlcolor=blue]{hyperref}
\usepackage{aliascnt}
\usepackage[nameinlink,capitalize]{cleveref}
\usepackage{microtype}
\usepackage[margin=3cm]{geometry}

\allowdisplaybreaks
\numberwithin{equation}{section}

\theoremstyle{plain}
\newtheorem{theorem}{Theorem}[section]
\newaliascnt{proposition}{theorem}
\newtheorem{proposition}[proposition]{Proposition}
\aliascntresetthe{proposition}
\newaliascnt{lemma}{theorem}
\newtheorem{lemma}[lemma]{Lemma}
\aliascntresetthe{lemma}
\newaliascnt{corollary}{theorem}
\newtheorem{corollary}[corollary]{Corollary}
\aliascntresetthe{corollary}
\newaliascnt{conjecture}{theorem}
\newtheorem{conjecture}[conjecture]{Conjecture}
\aliascntresetthe{conjecture}
\newaliascnt{remark}{theorem}
\newtheorem{remark}[remark]{Remark}
\aliascntresetthe{remark}

\crefname{theorem}{theorem}{theorems}
\Crefname{theorem}{Theorem}{Theorems}
\crefname{proposition}{proposition}{propositions}
\Crefname{proposition}{Proposition}{Propositions}
\crefname{lemma}{lemma}{lemmas}
\Crefname{lemma}{Lemma}{Lemmas}
\crefname{corollary}{corollary}{corollaries}
\Crefname{corollary}{Corollary}{Corollaries}
\crefname{conjecture}{conjecture}{conjectures}
\Crefname{conjecture}{Conjecture}{Conjectures}
\crefname{remark}{remark}{remarks}
\Crefname{remark}{Remark}{Remarks}

\newcommand{\Sph}{\mathbb S}
\newcommand{\Ball}{\mathbb B}
\newcommand{\R}{\mathbb R}
\newcommand{\Vol}{\operatorname{Vol}}
\newcommand{\inj}{\operatorname{inj}}
\newcommand{\tr}{\operatorname{tr}}
\newcommand{\dd}{\,d}

\newif\ifmarked
\markedfalse
\newcommand{\markbegin}{\ifmarked\color{blue}\fi}
\newcommand{\markend}{\ifmarked\color{black}\fi}

\title[Volume gap for minimal submanifolds in spheres, II]{\markbegin Volume gap for minimal submanifolds in spheres, II\markend}

\author[J. Q. Ge]{Jianquan Ge}
\address{School of Mathematical Sciences, Laboratory of Mathematics and Complex Systems, Beijing Normal University, Beijing 100875, P. R. China}
\email{jqge@bnu.edu.cn}

\author[F. G. Li]{Fagui Li${}^{*}$}
\address{Frontier Interdisciplinary Domain, Beijing Institute of Technology, Zhuhai, Guangdong 519088, P. R. China}
\email{lifagui@bitzh.edu.cn}

\subjclass[2020]{53C42, 53C24, 53C40}
\keywords{Minimal submanifold, higher codimension, linearly full immersion, volume gap, multiplicity, spherical monotonicity, minimal cone, normal bundle, nodal domain, hyperplane partition}
\date{}
\thanks{* Corresponding author.}
\thanks{J. Q. Ge is partially supported by NSFC (No. 12571049) and the Fundamental Research Funds for the Central Universities. }
\thanks{F. G. Li is partially supported by NSFC (No. 12271040 and 12501061), the Guangdong Provincial Association for Science and Technology Youth Talent Support Program (No. SKXRC2026413) and the Research Start-up Funding of Beijing Institute of Technology (No. 5640011253301).}
\begin{document}

\begin{abstract}
\markbegin
Let $f:M^n\looparrowright\Sph^{n+q}(1)$, $n\ge2$ and $q\ge1$, be a closed, connected, non-totally-geodesic minimal immersion with second fundamental form $h$, and put $S=|h|^2$ and $S_*=\max_M S$.  If $p\in f(M)$ has multiplicity $m$ and $f^{-1}(p)=\{x_1,\ldots,x_m\}$, then
\[
 \Vol(M)\ge
 \left[m+\varepsilon_n\sum_{j=1}^m
 \left(\frac{S(x_j)}{S_*}\right)^2\right]\Vol(\Sph^n),
\]
where $[110n(n+2)^2]^{-1}<\varepsilon_n<[104n(n+2)^2]^{-1}$.  If the immersion is linearly full, then
\[
 \frac{\Vol(M)}{\Vol(\Sph^n)}
 \ge
 \max\!\left\{1+\varepsilon_n,
 \frac{4(n+1)^n}{(n+3)^{n+2}}(n+q+1)\right\}.
\]
Moreover, for every hyperplane $H$ through the origin, each connected component of $M\setminus f^{-1}(H)$ has volume at least
$4(n+1)^n(n+3)^{-n-2}\Vol(\Sph^n)$; consequently the number of components is at most
$\frac{(n+3)^{n+2}}{4(n+1)^n}\frac{\Vol(M)}{\Vol(\Sph^n)}$.
\end{abstract}

\maketitle

\section{Introduction}

Let
$
 f:M^n\looparrowright\Sph^{n+q}(1)\subset\R^{n+q+1}
$
be a closed minimal immersion with second fundamental form $h$.  The immersion is called \emph{linearly full} if its image is not contained in a proper linear subsphere; this is the ``maximal dimension'' hypothesis used in the classical literature.

The first universal linearly full volume gap was proved by Cheng, Li, and Yau using heat-kernel comparison.  Define
\[
 C_n^{\rm CLY}
 =\frac12 n^{n/2}e\,\Gamma\!\left(\frac n2,1\right),
 \qquad
 \Gamma(s,1)=\int_1^\infty t^{s-1}e^{-t}\dd t,
\]
and
\[
 B_n^{\rm CLY}=2n+3+2\exp\bigl(2nC_n^{\rm CLY}\bigr).
\]

\begin{theorem}[Cheng--Li--Yau \cite{ChengLiYau1984}]\label{thm:CLY}
If $M^n\looparrowright\Sph^{n+q}(1)$ is compact, minimal, and linearly full, then
\[
 \Vol(M)>
 \left(1+\frac{2q-1}{B_n^{\rm CLY}}\right)\Vol(\Sph^n).
\]
\end{theorem}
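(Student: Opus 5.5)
The plan is to follow the heat-kernel comparison strategy of Cheng--Li--Yau. We compare the heat kernel of $M$ with the ambient Euclidean heat kernel restricted to $M$, and then use linear fullness to supply many independent eigenfunctions of $M$ with eigenvalue $n$.

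\textbf{Step 1: eigenfunctions from the coordinates.} The coordinate functions $x_1,\dots,x_{n+q+1}$ of $\R^{n+q+1}$, restricted to $M$, satisfy $\Delta x_i=-nx_i$ by minimality. Linear fullness says these $n+q+1$ functions are linearly independent on $M$. So the eigenvalue $\lambda=n$ has multiplicity at least $n+q+1$.

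\textbf{Step 2: a heat trace lower bound.} Write $H(x,y,t)$ for the heat kernel of $M$. Spectral expansion gives
\[
 \int_M H(x,x,t)\dd x \ge \frac{1}{\Vol(M)}+(n+q+1)e^{-nt}.
\]
More usefully, we get a pointwise bound $H(x,x,t)\ge \Vol(M)^{-1}+e^{-nt}\sum_i \phi_i(x)^2$, where $\phi_i$ is an orthonormalization of the coordinate functions. Since $\sum x_i^2=1$, averaging gives $\sum_i\phi_i^2$ on average equal to $(n+q+1)/\Vol(M)$.

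\textbf{Step 3: an upper bound by comparison.} We use Markvorsen/Cheng--Li--Yau comparison on minimal submanifolds. The function $E(x,t)$ built from the heat kernel of $\Sph^n$ (or of the hyperbolic/Euclidean model), as a function of the extrinsic distance $r=|x-y|$ or the spherical angle, is a supersolution of the heat equation on $M$. The reason is that for minimal $M$ one has $\Delta_M \rho\le$ the model Laplacian in the radial direction, and the radial profile is decreasing. This yields $H(x,x,t)\le K_{\Sph^n}(0,t)\cdot(\text{correction})$. The correction comes from a ball-type barrier with Dirichlet condition and is controlled by explicit Gamma-type integrals; this is where $C_n^{\rm CLY}$ arises, as $\tfrac12 n^{n/2}e\,\Gamma(n/2,1)$ is exactly the tail integral of $t^{n/2-1}e^{-t}$.

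\textbf{Step 4: combine.} Integrate over $M$ and compare with the identity $\int_{\Sph^n}K(x,x,t)=\sum_k m_k e^{-\lambda_k t}$ on the round sphere, whose first eigenspace has dimension exactly $n+1$. The $q$ extra eigenfunctions contribute $q\,e^{-nt}$ beyond the sphere's count. Choose $t$ of order $1/n$ to balance the error terms; this produces the exponential $\exp(2nC_n^{\rm CLY})$ in $B_n^{\rm CLY}$. Solving the resulting inequality for $\Vol(M)/\Vol(\Sph^n)$ gives $1+(2q-1)/B_n^{\rm CLY}$.

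\textbf{Main obstacle.} The delicate step is Step 3. We need a sharp upper bound for the diagonal heat kernel of $M$ whose leading term matches the round sphere exactly, so that the comparison is sharp when $M=\Sph^n$. We also need quantitative control of the error uniform in $n$. I would first try a transplanted spherical heat kernel via the extrinsic distance, with the maximum principle on $M\times(0,t]$. The bookkeeping of constants that yields $2q-1$ rather than $2q$ then comes from absorbing one eigenfunction's worth of error.
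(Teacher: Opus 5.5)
The paper does not prove this theorem; it quotes it from Cheng--Li--Yau. Your architecture matches their heat-kernel route: eigenvalue $n$ of multiplicity at least $n+q+1$ by linear fullness, heat trace, comparison with $\Sph^n$. The genuine gap is Step 3, the only nontrivial step, and the mechanism you propose does not work.

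For $\rho=d_{\Sph}(f(x_0),f(\cdot))$, minimality gives $\Delta_M\rho=\cot\rho\,(n-|\nabla\rho|^2)$. This is at least (not at most) the model value $(n-1)\cot\rho$ when $\rho<\pi/2$, and the inequality reverses beyond $\pi/2$. Moreover, monotonicity of a radial profile $\bar H(\rho)$ says nothing about the second-order term $\bar H''(|\nabla\rho|^2-1)$. A Dirichlet ball barrier cannot help either: by domain monotonicity Dirichlet kernels lie below $H_M$, so upper bounds for them give no upper bound for $H_M(x,x,t)$ on the closed manifold. Any multiplicative correction $1+c(t)$ would also need $c(t)\lesssim qe^{-nt}$, or no gap above $1$ survives.

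What works is a global transplant. Write the heat kernel of $\Sph^n$ as $G(\cos\rho,t)$ and put $u(y,t)=G(\varphi_{f(x_0)}(y),t)$. From $\Delta_M\varphi=-n\varphi$ and \eqref{eq:normal-defect-pointwise} one gets $(\partial_t-\Delta_M)u=(\partial_x^2G)(\varphi,t)\,|\xi_{f(x_0)}|^2$. So the property you need is convexity of $G$ in $x=\cos\rho$, not monotonicity in $\rho$. Convexity does hold. Use the Gegenbauer identity $\frac{d^2}{dx^2}C_k^{(\nu)}=4\nu(\nu+1)C_{k-2}^{(\nu+2)}$ with $\nu=\frac{n-1}{2}$, together with $k(k+n-1)=(k-2)(k+n+1)+2(n+1)$. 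These show that $\partial_x^2G$ is a positive multiple of $e^{-2(n+1)t}$ times the zonal heat kernel of $\Sph^{n+4}$. The initial trace of $u$ is $\sum_{x\in f^{-1}(f(x_0))}\delta_x$, so Duhamel's principle gives $H_M(x_0,x_0,t)\le G(1,t)$. Hence $\sum_ie^{-\lambda_it}\le V\sum_{k\ge0}m_ke^{-k(k+n-1)t}$ for all $t>0$, where $V=\Vol(M)/\Vol(\Sph^n)$ and $m_k$ is the multiplicity of $k(k+n-1)$ on $\Sph^n$. No correction term appears.

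Step 4 is inconsistent with the constant you are aiming for. If $t\sim1/n$ then $e^{nt}=O(1)$, so $\exp(2nC_n^{\rm CLY})$ cannot arise. The stated constant has the form $B_n^{\rm CLY}=2n+3+2e^{nt_0}$ with the large time $t_0=2C_n^{\rm CLY}\ge2$. In this argument $C_n^{\rm CLY}$ enters only through $t_0$, not through the comparison. For $t_0\ge2$ the modes of degree $k\ge2$ satisfy $\sum_{k\ge2}m_ke^{-k(k+n-1)t_0}\le\tfrac12e^{-nt_0}$; to see this, use $m_k\le2^{n+k}$ and $k(k+n-1)-n=(k-1)(k+n)$. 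Therefore
$1+(n+q+1)e^{-nt_0}<\sum_ie^{-\lambda_it_0}\le V\bigl(1+(n+\tfrac32)e^{-nt_0}\bigr)$,
which rearranges to
$V-1>(q-\tfrac12)e^{-nt_0}/\bigl(1+(n+\tfrac32)e^{-nt_0}\bigr)=(2q-1)/B_n^{\rm CLY}$.
So $2q-1$ comes from absorbing half an eigenfunction, not one, and strictness comes from the rest of the spectrum. Finally, in Step 2 the constant mode integrates to $1$, not $1/\Vol(M)$; that $1$ is what produces the baseline $V\ge1$.
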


\markbegin
For comparison with another consequence of the same heat-kernel method, we recall the first assertion of Corollary~6 of Cheng--Li--Yau \cite{ChengLiYau1984}.  We rewrite it in the present notation, with $q$ denoting their codimension $l$.  For a topological space $X$, we write $\pi_0(X)$ for the set of connected components of $X$, and $\# A$ for the cardinality of a set $A$.

\begin{theorem}[Cheng--Li--Yau  \cite{ChengLiYau1984}]\label{thm:CLY-partition}
Let $f:M^n\looparrowright\Sph^{n+q}(1)$ be a compact minimal immersion of maximal dimension, equivalently a linearly full immersion.  For a hyperplane $H\subset\R^{n+q+1}$ through the origin, let
\[
 \mathcal N_H=\#\pi_0\bigl(M\setminus f^{-1}(H)\bigr),
\]
so that $\mathcal N_H$ is the number of connected components into which $H$ divides $M$.
Then
\[
 \mathcal N_H<
 \left(\frac{\Vol(M)}{\Vol(\Sph^n)}-1\right)
 \frac{B_n^{\rm CLY}}2+q+\frac32.
\]
\end{theorem}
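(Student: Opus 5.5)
The plan follows the heat-kernel argument behind \cref{thm:CLY}, now applied separately to each nodal piece cut out by $H$. Write $H=\{x:\langle x,a\rangle=0\}$ with $|a|=1$, and let $u=\langle f,a\rangle$ on $M$. By minimality the coordinate functions satisfy $\Delta u+nu=0$, and $f^{-1}(H)$ is exactly the nodal set of $u$. Since the immersion is linearly full, $u\not\equiv0$. Let $\Omega_1,\dots,\Omega_{\mathcal N_H}$ be the components of $M\setminus f^{-1}(H)$. On each $\Omega_i$ the function $u$ has a fixed sign and vanishes on $\partial\Omega_i$, so it is a first Dirichlet eigenfunction there and $\lambda_1(\Omega_i)=n$.

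The first main step is a lower bound for $\Vol(\Omega_i)$ that does not depend on $i$.
\begin{itemize}
\item Use the Cheng--Li--Yau comparison for the Dirichlet heat kernel of a minimal submanifold of the sphere. Its on-diagonal value at a point is bounded below by the heat kernel of the totally geodesic $\Sph^n$. This comes from the monotonicity/extrinsic-ball argument they use to prove \cref{thm:CLY}.
\item Integrate over $\Omega_i$ and compare with the spectral expansion, in which the first term decays like $e^{-nt}$.
\item Choosing $t$ optimally should give $\Vol(\Omega_i)\ge c_n\Vol(\Sph^n)$. Here $c_n$ is the constant with $2c_n^{-1}=B_n^{\rm CLY}$, or something close to it; the constant $\exp(2nC_n^{\rm CLY})$ in $B_n^{\rm CLY}$ is exactly what arises from optimizing $t$.
\end{itemize}

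The second step is to sharpen the count using the hemisphere picture. The totally geodesic $\Sph^n$ cut by a hyperplane has two components, each of volume $\tfrac12\Vol(\Sph^n)$, and each has Dirichlet eigenvalue $n$. So the leading volume $\Vol(\Sph^n)$ is already accounted for by two pieces, each carrying at least half of it. I would therefore show:
\begin{itemize}
\item every nodal domain has volume at least $\tfrac12\Vol(\Sph^n)$, minus a deficit controlled by the heat-kernel excess;
\item every domain beyond the first two contributes at least $2/B_n^{\rm CLY}$ of excess volume.
\end{itemize}
Summing over the domains gives
\[
\Vol(M)\ge\Vol(\Sph^n)+(\mathcal N_H-2)\tfrac{2}{B_n^{\rm CLY}}\Vol(\Sph^n)-\text{(codimension correction)} .
\]
The codimension correction accounts for the additive $q+\tfrac32$. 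It comes from the same linear-algebra count of $2q-1$ used in \cref{thm:CLY}: there are $n+q+1$ independent coordinate eigenfunctions, and the eigenvalue-$n$ multiplicity contributes to the heat trace. Rearranging this gives the strict inequality in \cref{thm:CLY-partition}.

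The main obstacle is the bookkeeping in this second step: turning the trace inequality $\sum_j e^{-\lambda_j t}\ge(\text{sphere heat trace})$ on $M$ into a per-domain statement. The approach I would try first is domain monotonicity of Dirichlet heat kernels, $\sum_i K^{\Omega_i}\le K^M$. Combining this with the lower bound $K^{\Omega_i}(x,x,t)\ge$ (hemisphere kernel evaluated at the extrinsic distance of $x$ from $H$) should let me absorb the eigenvalue-$n$ terms, which number at least $n+q+1$ by fullness, into the constant $q+\tfrac32$. If this sharpening fails, the cruder per-domain bound from the first step still gives $\mathcal N_H\le C\,\Vol(M)/\Vol(\Sph^n)$, which is the form of the improvement stated in the abstract.
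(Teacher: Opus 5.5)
There is a genuine gap. Your per-domain strategy never uses Courant's nodal domain theorem (via min--max), which is the tool that turns global spectral information into a count of nodal domains. Without it, linear fullness cannot enter.

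The argument the paper attributes to Cheng--Li--Yau treats $u=\langle f,a\rangle$ as a \emph{global} eigenfunction of $M$:
\begin{itemize}
\item Let $k=\#\{j:\lambda_j(M)<n\}$, counting $\lambda_0=0$. Courant gives $\mathcal N_H\le k+1$.
\item Linear fullness gives the eigenvalue $n$ multiplicity at least $n+q+1$. These eigenvalues are disjoint from the $k$ below $n$, so the heat trace satisfies $Z_M(t)\ge 1+(k+n+q)e^{-nt}$.
\item Comparing with the Cheng--Li--Yau upper bound $Z_M(t)\le\frac{\Vol(M)}{\Vol(\Sph^n)}Z_{\Sph^n}(t)$ at a suitable $t$ bounds $k$.
\end{itemize}
This global count is where both the maximal-dimension hypothesis and the factor $B_n^{\rm CLY}/2$ enter.

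In your decomposition the multiplicity has nowhere to go. The other coordinate functions do not vanish on $\partial\Omega_i$, so they are not Dirichlet eigenfunctions of any $\Omega_i$. Since the $\Omega_i$ are disjoint, domain monotonicity only gives $\sum_i Z_{\Omega_i}(t)\le Z_M(t)$. Combined with $Z_{\Omega_i}(t)\ge e^{-nt}$, this yields $\mathcal N_H e^{-nt}\le\frac{\Vol(M)}{\Vol(\Sph^n)}Z_{\Sph^n}(t)$, and there the eigenvalue-$n$ eigenspace of $M$ is never counted. Your second step has no proof and no known comparison theorem behind it. That step consists of three claims: volume at least $\frac12\Vol(\Sph^n)$ up to a deficit, an excess of $2/B_n^{\rm CLY}$ for each extra domain, and a lower bound of $K^{\Omega_i}$ by a hemisphere kernel.

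Your first step also has the comparison backwards. A lower bound $K^{\Omega_i}(x,x,t)\ge\bar H(0,t)$ is false for large $t$: the left side decays like $e^{-nt}$, while $\bar H(0,t)\to1/\Vol(\Sph^n)$. Even if it held, integrating it would bound $\Vol(\Omega_i)$ from above, not below.

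The Cheng--Li--Yau comparison is an upper bound. Together with $K^{\Omega_i}\le K^M$ it gives $\Vol(\Omega_i)\ge\Vol(\Sph^n)\sup_{t>0}e^{-nt}/Z_{\Sph^n}(t)$. Hence $\mathcal N_H\le c_n^{-1}\Vol(M)/\Vol(\Sph^n)$ with $c_n^{-1}=\inf_{t>0}e^{nt}Z_{\Sph^n}(t)>n+2$. That is a heat-kernel analogue of \Cref{thm:hyperplane-partition}, not the stated inequality. At the Cheng--Li--Yau volume threshold $1+(2q-1)/B_n^{\rm CLY}$, the right-hand side of the statement equals $2q+1$. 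This is smaller than $n+2$ whenever $2q+1\le n+2$, so your fallback does not imply the statement.

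To repair the proof, drop the domain-by-domain heat-kernel analysis and run the global Courant count above.
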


Their proof uses that the coordinate height functions have eigenvalue $n$, bounds the spectral position of this eigenspace by the heat-trace estimate in their Theorem~6, and then applies the Courant nodal-domain theorem.  Thus both the maximal-dimension hypothesis and the rapidly growing factor $B_n^{\rm CLY}$ enter through the spectral argument.
\markend

Ding, Ge, and Li \cite{DingGeLi2025} sharpened the coefficients in the Cheng--Li--Yau \cite{ChengLiYau1984} argument and subsequently combined the heat-trace comparison with the Cheng--Yang eigenvalue estimate \cite{ChY} to obtain a further improvement.  The present paper is a continuation of our previous work \cite{GeLi2022} and builds on the spherical height-function monotonicity and multiplicity estimates developed there.  Here the codimension factor is recovered from the full space of spherical height functions and a mean-value formula on the Euclidean minimal cone, without a heat kernel or heat trace.

\markbegin
A related benchmark is Yau's second-smallest-volume conjecture for closed minimal hypersurfaces \cite{Yau1992}.  For $1\le k\le n-1$, let
\[
 \mathcal C_{k,n-k}
 =\Sph^k\!\left(\sqrt{\frac{k}{n}}\right)
 \times
 \Sph^{n-k}\!\left(\sqrt{\frac{n-k}{n}}\right)
 \subset \Sph^{n+1}(1)
\]
be the minimal Clifford hypersurface.

\begin{conjecture}[Yau's second-smallest-volume conjecture \cite{Yau1992}]\label{conj:Yau-volume}
Every closed non-totally-geodesic minimal hypersurface $M^n\looparrowright\Sph^{n+1}(1)$ satisfies
\[
 \Vol(M)\ge
 \min_{1\le k\le n-1}\Vol(\mathcal C_{k,n-k}).
\]
Equivalently, after the totally geodesic equator, the least possible volume should be attained by a minimal Clifford hypersurface.
\end{conjecture}

For $n=2$, \Cref{conj:Yau-volume} follows from the work of Li--Yau, Calabi, and Marques--Neves \cite{LiYau1982,Calabi1967,MarquesNeves2014}; Brendle's proof of the Lawson conjecture gives the corresponding uniqueness theorem for embedded minimal tori in $\Sph^3$ \cite{Brendle2013}.  The conjectural lower bound is known for several classes of rotational hypersurfaces \cite{PerdomoWei2015,ChengWeiZeng2021,ChengLaiWei2024}; Ilmanen--White \cite{IlmanenWhite2015} obtained the corresponding sharp asymptotic density statement for certain area-minimizing cones; and Viana \cite{Viana2023} proved the lower bound in an antipodally invariant class.  The nonembedded case was established by Ge--Li \cite{GeLi2022} and Nguyen \cite{Nguyen2023}.  The multiplicity estimates behind those results originate in the spherical monotonicity method of Choe--Gulliver \cite{ChoeGulliver1992} and its weighted refinements \cite{Nguyen2023,GeLi2022}.
\markend

The curvature part of our argument belongs to the Simons-gap tradition.  Simons \cite{Simons1968} established the fundamental equation for the squared norm of the second fundamental form $h$. The equality cases of the first gap were analyzed by Lawson~\cite{Lawson1969} and Chern, do Carmo, and Kobayashi~\cite{ChernDoCarmoKobayashi1970}, while Yau~\cite{Yau1975} sharpened the codimension-two picture.  Li--Li \cite{LiLi1992} and Chen--Xu~\cite{ChenXu1993} obtained the universal higher-codimensional threshold $2n/3$ of $S=|h|^2$; and Lu \cite{Lu2011}  connected the normal scalar-curvature inequality with the DDVV framework.  A recent explicit higher-codimensional second-gap theorem under flatness of the normal bundle is due to Ge--Li--Zhang \cite{GeLiZhang2026}.  \markbegin
Very recently, Li--Zhao \cite{LiZhaoCurvatureGap2026} proved that every closed, connected, non-totally-geodesic minimal immersion $M^n\looparrowright\Sph^{n+q}(1)$ with $n\ge3$ and $q\ge2$ satisfies
$
 \max_M |h|^2\ge \frac{2n}{3}
 +\frac{1}{787500},
$
which gives an explicit separation from the Li--Li and Chen--Xu first-gap level $2n/3$.

  For $p\in f(M)$, let
\[
 \varphi_p=\langle f,p\rangle,
 \qquad
 \xi_p=p^\perp,
\]
where $p^\perp$ is the component of the fixed Euclidean vector $p$ in the normal bundle of $M$ inside the sphere.  The basic identity proved in \Cref{prop:exact-defect} is
\[
 \int_{\{\varphi_p\ge0\}}\varphi_p\dd V
 =m\Vol(\Ball^n)+\mathcal R_p,
 \qquad
 \mathcal R_p=
 \int_{\{\varphi_p>0\}}
 \frac{\varphi_p|\xi_p|^2}{(1-\varphi_p^2)^{n/2+1}}\dd V,
\]
where $m=\#f^{-1}(p)$.  Keeping both this local defect and the global normal-height energy gives
\[
 \Vol(M)\ge m\Vol(\Sph^n)
 +\frac{\Vol(\Sph^n)}{\Vol(\Ball^n)}\mathcal R_p
 +\frac1{n+2}\int_M|\xi_p|^2\dd V.
\]

For the curvature-weighted estimate, put
\[
 S=|h|^2,
 \qquad
 S_*=\max_M S.
\]
The proof below produces an explicit constant $\varepsilon_n>0$ depending only on the dimension.  Its exact formula is given in \eqref{eq:epsilon-definition}; for the statement of the main theorem it is enough to note that
\[
 \frac{1}{110n(n+2)^2}<\varepsilon_n<
 \frac{1}{104n(n+2)^2}.
\]
\begin{theorem}\label{thm:main}
Let $n\ge2$ and $q\ge1$, and let
$f:M^n\looparrowright\Sph^{n+q}(1)$ be a closed, connected, non-totally-geodesic minimal immersion.  
If $p\in f(M)$ and $f^{-1}(p)=\{x_1,\ldots,x_m\}$, then
\[
 \Vol(M)\ge
 \left[
 m+\varepsilon_n\sum_{j=1}^m
 \left(\frac{S(x_j)}{S_*}\right)^2
 \right]\Vol(\Sph^n).
\]
Here $\varepsilon_n$ is the codimension-independent coefficient defined explicitly in \eqref{eq:epsilon-definition}.
\end{theorem}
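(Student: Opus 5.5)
We start from the inequality displayed just before the statement, which comes from \Cref{prop:exact-defect}:
\[
 \Vol(M)\ge m\Vol(\Sph^n)
 +\frac{\Vol(\Sph^n)}{\Vol(\Ball^n)}\mathcal R_p
 +\frac1{n+2}\int_M|\xi_p|^2\dd V .
\]
It then suffices to show that the two nonnegative error terms together are at least $\varepsilon_n\Vol(\Sph^n)\sum_j (S(x_j)/S_*)^2$. The plan is to localize near each preimage $x_j$. Near $x_j$ we have $\varphi_p=\cos r$ to leading order, where $r$ is intrinsic distance. The normal height $\xi_p$ vanishes at $x_j$, since $p=f(x_j)$ is tangent to the sphere only through $f$. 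Its covariant derivative is $\nabla^\perp_X\xi_p=-h(X,p^\top)$ up to sign conventions. Differentiating twice at $x_j$, where $p^\top=0$ and $\nabla p^\top=-\mathrm{Id}$ (from $\mathrm{Hess}\,\varphi_p=-\varphi_p g-\langle h,\xi_p\rangle$), we get
\[
 \xi_p(\exp_{x_j}(v))=\tfrac12 h(v,v)+O(|v|^3).
\]
Hence $|\xi_p|^2\approx \tfrac14|h_{x_j}(v,v)|^2$. Averaging over the unit sphere of directions and using minimality ($\tr h=0$) gives
\[
 \fint|h(v,v)|^2=\frac{2S(x_j)}{n(n+2)}.
\]
This is the source of the $n(n+2)$ factors in $\varepsilon_n$.

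The quantitative step is to control the Taylor remainder uniformly, using only $S_*$. We work on a geodesic ball of radius $\rho=c/\sqrt{S_*}$, with $c$ a small absolute constant and $\rho$ also capped by a small constant. On this ball we need three things.
\begin{itemize}
\item[(i)] The Gauss equation gives sectional curvatures between $1-S_*$ and $1+S_*$ in size. Hence, by Rauch/Bishop comparison, the ball contains a region of volume $\gtrsim \Vol(\Ball^n)\rho^n$, taking the immersion locally (balls in $M$ are fine even though $f$ is not injective).
\item[(ii)] We derive an ODE estimate for $|\nabla^\perp\xi_p - (\text{linear part})|$ along geodesics. This uses $|\nabla^\perp\xi_p|\le\sqrt{S}|p^\top|$ together with $|\nabla p^\top+\varphi_p\,\mathrm{Id}|\le\sqrt S|\xi_p|$. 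It shows $|\xi_p(\exp v)-\tfrac12h(v,v)|\le C\sqrt{S_*}\,|v|^2\cdot c$, i.e.\ a relative error of $O(c)$.
\item[(iii)] The directional average must be kept on the portion of directions where the leading term dominates.
\end{itemize}
This yields
\[
 \int_{B_\rho(x_j)}|\xi_p|^2\gtrsim \frac{S(x_j)}{n(n+2)}\rho^{n+4}\Vol(\Sph^{n-1})\frac{1}{n+4}.
\]
With $\rho^4\sim c^4/S_*^2$ this looks like $S(x_j)/S_*^2$, not $S(x_j)^2/S_*^2$. To fix the scaling, we instead choose the radius $\rho_j=c\sqrt{S(x_j)}/S_*$. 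This is permitted because $S(x_j)\le S_*$ keeps $\rho_j\le c/\sqrt{S_*}$, and the required condition $S(x_j)\rho_j^4\ge \text{const}\cdot (S(x_j)/S_*)^2$ then holds exactly. The result is $\rho_j^n$ times that, so with $\rho_j\ge$ order $1$ when $S_*\lesssim1$…

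Here the factor $\rho_j^n$ is a genuine problem, and this is the main obstacle. A plain local ball integral loses $\rho^n$, which is not dimension-uniform in $S_*$. The remedy is to use the weighted defect $\mathcal R_p$ instead of the flat energy. Its weight $\varphi_p(1-\varphi_p^2)^{-n/2-1}\sim r^{-n-2}$ exactly cancels the volume growth $r^{n-1}dr$. We therefore get
\[
 \mathcal R_p\gtrsim \sum_j\int_0^{\rho_j}\frac{S(x_j)r^4}{n(n+2)}\,r^{-n-2}r^{n-1}\,dr
 =\sum_j\frac{S(x_j)\rho_j^2}{2n(n+2)}\cdot\frac{\Vol(\Sph^{n-1})}{4}.
\]
This is scale-free in $n$ apart from explicit constants. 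Taking $\rho_j^2=c^2S(x_j)/S_*^2$, but capped so that $\rho_j\le\rho_0$ (a fixed radius where $\cos r$ and the curvature comparisons are valid, e.g.\ $\rho_0$ with $S_*\rho_0^2\le c^2$ already ensured), gives exactly $\sum_j (S(x_j)/S_*)^2$ up to constants.

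Then we divide by $\Vol(\Ball^n)$, using $\Vol(\Sph^{n-1})=n\Vol(\Ball^n)$. We must also check that the balls around distinct $x_j$ either are disjoint in $M$ (they are distinct points of $M$, and we take $\rho_j$ below half their separation, or else simply note that $\mathcal R_p$ is additive over disjoint preimage balls after shrinking; the injectivity radius lower bound from the curvature bound handles this). Collecting the explicit constants from steps (i)--(iii), including the $1/(n+2)$ and $1/(2n(n+2))$ factors and the choice of $c$, produces $\varepsilon_n$ within the stated bounds $[110n(n+2)^2]^{-1}<\varepsilon_n<[104n(n+2)^2]^{-1}$. The hardest part is (ii): getting the relative Taylor error to be $O(c)$ with explicit, dimension-free constants, so that the numerical range for $\varepsilon_n$ comes out.
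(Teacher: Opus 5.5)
Your overall architecture matches the paper's. You start from \Cref{thm:two-remainders}, discard $\frac1{n+2}\int_M|\xi_p|^2\dd V$, and bound $\mathcal R_p$ from below sheet by sheet on disjoint balls of radius proportional to $\sqrt{S(x_j)}/S_*$. The ingredients are the same too: $\xi_p\approx\frac12h_{x_j}(v,v)s^2$, the fourth-moment identity, and the cancellation of $s^{n-1}$ by $(1-\varphi_p^2)^{-n/2-1}\ge s^{-n-2}$.

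The genuine gap is step (ii), which cannot be done with the ingredients you list. Put $Y(s)=P^\perp_{s\to0}\xi_p(\gamma_v(s))$. Your two first-order bounds control $\nabla\varphi_p+sT$, so they reduce $Y'(s)$ to $s\,P^\perp_{s\to0}h_{\gamma_v(s)}(T,T)$ up to an admissible error. To replace $h_{\gamma_v(s)}(T,T)$ by $h_x(v,v)$, however, you must control $\int_0^s|\nabla_Th|\dd\tau$. Pointwise information on $S$ only gives $|P^\perp_{s\to0}h_{\gamma_v(s)}(T,T)-h_x(v,v)|\le2\sqrt{S_*}$. That is at least as large as the main term, whose averaged size is $\sigma_n\sqrt{S(x_j)}$.

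The missing idea is an explicit, scale-invariant global bound $\sup_M|\nabla h|\le D_nS_*$ (\Cref{prop:gradient}). The paper derives it in three steps:
\begin{itemize}
\item it uses the differentiated Simons identity (\Cref{lem:differentiated-Simons});
\item it applies a Peng--Terng-type estimate (\Cref{lem:PT}) to get $\frac12\Delta|\nabla h|^2\ge|\nabla^2h|^2+(2n+3-L_nS)|\nabla h|^2$;
\item it applies the maximum principle to $|\nabla h|^2+cS^2$, combined with $\frac12\Delta S^2\ge2S|\nabla h|^2+S^2(2n-3S)$, and optimizes over $c$.
\end{itemize}
With this bound, \Cref{lem:vector-jet} gives the error $E_nS_*s^3+\frac{\sqrt{S_*}}{24}s^4$. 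Relative to $\sqrt{S(x_j)}s^2$, its cubic part is $E_nt$ with $t=S_*s/\sqrt{S(x_j)}$. That, and not merely fixing the power of $S(x_j)$, is why the radius is $r_n\sqrt{S(x_j)}/S_*$. Since $\varepsilon_n$ in \eqref{eq:epsilon-definition} is built from $E_n=(1+2D_n)/6$ and $r_n$, no argument lacking this estimate reaches the stated coefficient. A soft elliptic-regularity bound $|\nabla h|\le CS_*$ would give at best a different, non-explicit, possibly codimension-dependent constant.

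There are also problems in (i) and in the disjointness step.
\begin{itemize}
\item Only the Rauch/G\"unther direction helps here; Bishop bounds volume from above under a Ricci lower bound. What you actually need is a pointwise lower bound on the polar Jacobian $J_x(s,v)$. It comes from G\"unther's comparison with $\operatorname{sec}_M\le\kappa_nS_*$, which uses \Cref{lem:trace-free} and $S_*\ge2n/3$, and it holds only inside the injectivity radius.
\item An upper curvature bound does not bound $\inj(M)$ from below; thin flat tori are counterexamples.
\item $\inj(M)$ says nothing about $d_M(x_i,x_j)$ for distinct preimages of $p$. Taking $\rho_j$ below half the actual separation makes your constant depend on $f$.
\end{itemize}
The paper obtains both $\inj(M)\ge\pi/(2\sqrt{\lambda_nS_*})$ and $d_M(x_i,x_j)\ge\pi/\sqrt{\lambda_nS_*}$ extrinsically (\Cref{lem:scale-separation}). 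Geodesics of $M$ have Euclidean curvature at most $\sqrt{\lambda_nS_*}$. Fenchel's theorem, applied to the closed image curve with one corner, then bounds the lengths of geodesic loops and sheet distances, and Klingenberg's lemma converts this into the injectivity-radius bound.

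Finally, step (iii) is unnecessary. The reverse triangle inequality in $L^2(\Sph^{n-1})$, applied to the jet estimate together with the fourth-moment identity, directly gives the angular bound $\sqrt{S(x_j)}\,s^2P_n(t)$ of \Cref{lem:angular} without discarding any directions.
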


\markbegin
\begin{remark}\label{rem:q-one}
No separate argument is needed when $q=1$.  In that case there is only one shape operator, so the normal commutator terms vanish and the quartic term in Simons' identity is exactly $R_1=S^2$.  Hence the estimates $R_1\le \frac32S^2$ and $S_*\ge\frac{2n}{3}$ used in the codimension-free proof remain valid; in fact the hypersurface Simons identity gives the sharper first-gap bound $S_*\ge n$ for every non-totally-geodesic closed minimal hypersurface.  Thus \Cref{thm:main} holds for all $q\ge1$ with the same coefficient $\varepsilon_n$.

Moreover, a closed, connected, non-totally-geodesic minimal hypersurface in $\Sph^{n+1}$ is automatically linearly full.  Indeed, if its image were contained in a proper linear subsphere, the immersion dimension forces that subsphere to be an equatorial $\Sph^n$; the image is then a closed open subset of that equator and hence the immersion is totally geodesic.  Consequently, the linearly full results below also apply automatically in codimension one.
\end{remark}

If $x_*\in M$ satisfies $S(x_*)=S_*$ and $p_*=f(x_*)$, define
\[
 m_*=\#f^{-1}(p_*),
 \qquad
 \mathcal K_*=\sum_{x\in f^{-1}(p_*)}
 \left(\frac{S(x)}{S_*}\right)^2.
\]
Then $m_*\ge1$ and $\mathcal K_*\ge1$.

For the linearly full result, set
\[
 b_n=\frac{4(n+1)^n}{(n+3)^{n+2}}.
\]

\begin{theorem}\label{thm:full-main}
Under the hypotheses of \Cref{thm:main}, assume in addition that the immersion is linearly full.  Then
\begin{equation}\label{eq:full-max}
 {
 \frac{\Vol(M)}{\Vol(\Sph^n)}
 \ge
 \max\!\left\{
 m_*+\varepsilon_n\mathcal K_*,\,
 b_n(n+q+1)
 \right\}
 \ge 
  \max\!\left\{
  1+\varepsilon_n,\,
  b_n(n+q+1)
  \right\}.}
\end{equation}
\end{theorem}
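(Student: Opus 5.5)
The first entry of the maximum comes straight from \Cref{thm:main}. We apply it at the point $p=p_*=f(x_*)$, where $x_*$ maximizes $S$. Then $m=m_*$ and the curvature sum is exactly $\mathcal K_*$, so $\Vol(M)/\Vol(\Sph^n)\ge m_*+\varepsilon_n\mathcal K_*$. Since $x_*\in f^{-1}(p_*)$, we have $m_*\ge1$. The summand of $\mathcal K_*$ indexed by $x_*$ equals $1$, and all other summands are nonnegative, so $\mathcal K_*\ge1$. This gives $m_*+\varepsilon_n\mathcal K_*\ge 1+\varepsilon_n$. The second inequality in \eqref{eq:full-max} is then immediate, because $\max\{a,c\}\ge\max\{b,c\}$ whenever $a\ge b$. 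The substance of the theorem is therefore the codimension-sensitive bound $\Vol(M)\ge b_n(n+q+1)\Vol(\Sph^n)$.

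For that bound I would average the basic inequality over all unit vectors $p\in\Sph^{n+q}$, not only over $p\in f(M)$. I would use the height-function monotonicity from \cite{GeLi2022} in the following form. For any unit vector $p$ and any $t\in(0,1)$, apply monotonicity of $\varphi_p$-weighted volumes of the Euclidean cone over $M$ centred at a point of $M$ near the top of the cap $\{\varphi_p\ge t\}$. Alternatively, use the fact that $\varphi_p$ is a first eigenfunction-type function with $\Delta\varphi_p=-n\varphi_p$. Either route should give a lower bound for $\int_{\{\varphi_p>t\}}\varphi_p\,dV$ in terms of $\Vol(\Ball^n)$ times a factor like $(1-t^2)^{n/2}$ whenever the cap is nonempty. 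The cap is nonempty as long as $t<\max_M\varphi_p$.

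Next comes linear fullness. Since $\sum_{i}\varphi_{e_i}^2=1$ on $M$ for an orthonormal basis $e_0,\dots,e_{n+q}$ of $\R^{n+q+1}$, integrating gives $\int_M\varphi_{e_i}^2\,dV$ summing to $\Vol(M)$. I would instead use the mean-value formula on the cone, which is the step the introduction advertises. For an orthonormal basis adapted to $M$, linear fullness guarantees that each $\varphi_{e_i}$ is nonconstant, so each takes positive values. Applying the cap estimate to each $\pm e_i$ should yield $\int_M\varphi_{e_i}^2\,dV\ge c_n\Vol(\Sph^n)$, and summing over the $n+q+1$ directions gives $\Vol(M)\ge (n+q+1)c_n\Vol(\Sph^n)$.

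The explicit $b_n=4(n+1)^n/(n+3)^{n+2}$ should come from optimizing over the threshold. A bound of the form $\int\varphi^2\ge t^2\cdot(\text{cap volume})\gtrsim t^2(1-t^2)^{n/2}\cdot(\dots)$ is maximized at $t^2=2/(n+2)$ or nearby. The factors $(n+1)^n/(n+3)^{n+2}$ suggest the optimum lies at an interior point determined by the exponent $n/2+1$ in $\mathcal R_p$. I would pick $t$ to match this.

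The main obstacle is the per-direction lower bound $\int_M\varphi_e^2\,dV\ge b_n\Vol(\Sph^n)$ for every unit vector $e$ along which $f$ is not contained in $e^\perp$. When $\max\varphi_e$ is small, the caps degenerate, so one needs a bound uniform in $\max\varphi_e$. My first attempt would be to choose the basis via Gram–Schmidt, starting from points of $f(M)$, so that $\max\varphi_{e_i}$ is controlled; or else to replace the basis sum by the rotation-invariant average $\int_{\Sph^{n+q}}\int_M\varphi_p^2\,dV\,dp$. The latter equals $\Vol(M)\Vol(\Sph^{n+q})/(n+q+1)$. I would bound it from below using caps around points $p\in f(M)$, where \Cref{prop:exact-defect} applies with $m\ge1$ and in fact with $\mathcal R_p\ge0$.
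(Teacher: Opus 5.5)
Your treatment of the first branch is correct and matches the paper: apply \Cref{thm:main} at $p_*$, use $m_*\ge1$ and $\mathcal K_*\ge1$, and the second inequality in \eqref{eq:full-max} follows from monotonicity of $\max$. The codimension branch $\Vol(M)\ge b_n(n+q+1)\Vol(\Sph^n)$, however, is not proved, and none of your proposed routes closes the gap.

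First, the cap bound ``$\int_{\{\varphi_p>t\}}\varphi_p\dd V\gtrsim\beta_n(1-t^2)^{n/2}$ whenever the cap is nonempty'' is false for $p\notin f(M)$. The function $\Theta_p$ gets its lower bound $m\beta_n$ only from the limit $t\to1$ (\Cref{lem:endpoint-density}). If $\max_M\varphi_p=t_0<1$, then $\int_{\{\varphi_p>t\}}\varphi_p\dd V\to0$ as $t\uparrow t_0$.

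Second, a per-direction bound $\int_M\varphi_{e_i}^2\dd V\ge c_n\omega_n$ in a Euclidean orthonormal basis gives no leverage. With $G=\int_Mf\otimes f\dd V$, one has $\sum_i\int_M\varphi_{e_i}^2\dd V=\tr G=\Vol(M)$ identically. Moreover, linear fullness does not bound $\lambda_{\min}(G)/\omega_n$ from below: full immersions can degenerate to non-full ones at fixed volume, e.g.\ in the do Carmo--Wallach families (compare the remark after \Cref{thm:frame-full}). Gram--Schmidt from image points only yields $\max_M\varphi_{e_k}\ge\operatorname{dist}(p_k,V_k)$, where $V_k$ is the span of the earlier vectors, and this can be arbitrarily small.

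Third, the identity $\int_{\Sph^{n+q}}\int_M\varphi_p^2\dd V\dd p=\Vol(M)\omega_{n+q}/(n+q+1)$ holds for every submanifold. A lower bound for it is therefore literally the claim. Caps around $p\in f(M)$ contribute nothing, because $f(M)$ is a null set in $\Sph^{n+q}$.

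The missing idea is to use a basis that is orthonormal in $L^2(M)$, not in $\R^{n+q+1}$, and to bound point values of \emph{arbitrary} height functions by their $L^2$ norm. \Cref{lem:cone-mean} gives $\|\varphi_a\|_{L^\infty(M)}^2\le\Lambda_n\int_M\varphi_a^2\dd V$ for every $a$. To prove it, apply the Michael--Simon monotonicity to the nonnegative subharmonic function $\widehat\varphi_a^2=r^2\varphi_a^2$ on the $(n+1)$-dimensional minimal cone $F(r,x)=rf(x)$, centred at $z=f(x_0)$ where $|\varphi_a|$ is maximal. Since $\{|F-z|<\rho\}\subset\{r<1+\rho\}$, this produces the factor $(1+\rho)^{n+3}/[(n+3)\beta_{n+1}\rho^{n+1}]$, which is minimized at $\rho=(n+1)/2$ and gives $\Lambda_n^{-1}=b_n\omega_n$. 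This optimization, not a threshold $t$ in $\mathcal R_p$, is the origin of $b_n$.

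Now let $\varphi_1,\dots,\varphi_N$ be an $L^2(M)$-orthonormal basis of the $N$-dimensional height space; this is where linear fullness enters. Put $K=\sum_A\varphi_A^2$. Then $K(x)=\max\{\varphi(x)^2:\|\varphi\|_2=1\}\le\Lambda_n$ and $\int_MK\dd V=N$, so $N\le\Lambda_n\Vol(M)$. Equivalently, $K(x)=\langle G^{-1}f(x),f(x)\rangle$ and $\int_MK\dd V=\tr(G^{-1}G)=N$. The factor $N$ is the trace of the identity, and no lower bound on $\int_M\varphi_e^2$ in any fixed direction is needed. The extremal function at $x$ is $\varphi_a$ with $a=G^{-1}f(x)$, which is generally not a point of $f(M)$. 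That is why cap estimates only for $p\in f(M)$ cannot suffice.
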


The cone mean-value argument can also be localized to a single nodal domain of one height function.  For $a\in\Sph^{n+q}$, let
\[
 \mathcal H_a=\{y\in\R^{n+q+1}:\langle y,a\rangle=0\}
\]
be the hyperplane through the origin orthogonal to $a$, and define
\[
 \mathcal N_a=\#\pi_0\bigl(M\setminus f^{-1}(\mathcal H_a)\bigr).
\]

\begin{theorem}\label{thm:hyperplane-partition}
Let $f:M^n\looparrowright\Sph^{n+q}(1)$ be a closed minimal immersion, where $n\ge2$ and $q\ge1$.   For every $a\in\Sph^{n+q}$ and every connected component $\Omega$ of $M\setminus f^{-1}(\mathcal H_a)$,
\[
 \Vol(\Omega)\ge b_n\Vol(\Sph^n),
 \qquad
 b_n=\frac{4(n+1)^n}{(n+3)^{n+2}}.
\]
Consequently, $\mathcal N_a$ is finite and
\[
 \mathcal N_a
 \le
 \frac{(n+3)^{n+2}}{4(n+1)^n}
 \frac{\Vol(M)}{\Vol(\Sph^n)}
 <
 \frac{e^2}{4}(n+3)^2
 \frac{\Vol(M)}{\Vol(\Sph^n)}.
\]
If $f(M)\subset\mathcal H_a$, then $\mathcal N_a=0$ and the assertion is vacuous.
\end{theorem}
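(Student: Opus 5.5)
The plan is to run a mean-value argument for the harmonic height function on the Euclidean minimal cone, localized to the cone over a single nodal domain. Fix $a\in\Sph^{n+q}$ and a component $\Omega$ of $M\setminus f^{-1}(\mathcal H_a)$. Replacing $a$ by $-a$ if necessary, assume $\varphi_a=\langle f,a\rangle>0$ on $\Omega$, so that $\varphi_a=0$ on $\partial\Omega$. Consider the cone map $F:(0,\infty)\times M\to\R^{n+q+1}$, $F(t,x)=tf(x)$. It is a minimal immersion of the $(n+1)$-manifold $C=(0,\infty)\times M$ with metric $dt^2+t^2g$. On $C$ the linear function $u(t,x)=\langle tf(x),a\rangle=t\varphi_a(x)$ is harmonic, which is equivalent to $\Delta_M\varphi_a=-n\varphi_a$. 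Define $u_\Omega=u$ on $(0,\infty)\times\Omega$ and $u_\Omega=0$ elsewhere. This is the maximum of $0$ and a harmonic function across the zero set $\{u=0\}$, so $u_\Omega$ is a nonnegative weakly subharmonic function on $C$. The vertex has zero capacity because $n+1\ge3$, so it causes no trouble.

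Next I would apply the weighted monotonicity/mean-value inequality for minimal immersions, in the form used for the multiplicity estimates of \Cref{prop:exact-defect}. For a point $y_0=F(s,x_0)$ and a nonnegative subharmonic $w$, the quantity $r^{-n-1}\int_{C\cap B_r(y_0)}w\,dV$ is nondecreasing in $r$ and tends to $\Vol(\Ball^{n+1})\,w(y_0)$ as $r\to0$. Here $C\cap B_r$ is counted with multiplicity, i.e.\ as the preimage under $F$ of the ball. I would actually use a smooth-weight version with kernel $(r^2-|y-y_0|^2)_+$; this is where the exponents $n+2$ and $n+3$ should come from. Choose $x_0\in\Omega$ with $\varphi_a(x_0)=\varphi_{\max}:=\max_\Omega\varphi_a$, so that $u_\Omega(y_0)=s\varphi_{\max}$.

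For the upper bound, $B_r(y_0)$ lies inside $\{|y|\le s+r\}$. Integrating in polar form over the cone therefore gives
\[
 \int_{C\cap B_r(y_0)}u_\Omega\,(r^2-|y-y_0|^2)_+\,dV
 \le r^2\varphi_{\max}\Vol(\Omega)\int_0^{s+r}t^{n+1}\,dt .
\]
The weight can be kept inside the $t$-integral for a slightly better constant. Comparing this with the lower bound $c_n\,r^{n+3}s\varphi_{\max}$ from the weighted mean-value inequality makes $\varphi_{\max}$ cancel. The result is $\Vol(\Omega)\ge c'_n\,s\,r^{n+1}(s+r)^{-(n+2)}$ up to dimensional constants. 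Normalizing $s+r=1$ and maximizing $s(1-s)^{n+1}$ (or the corresponding weighted expression) over $s\in(0,1)$ produces an optimum at $s$ of order $2/(n+3)$. Writing the resulting constant via $\Vol(\Sph^n)=(n+1)\Vol(\Ball^{n+1})$ should yield exactly $b_n=4(n+1)^n(n+3)^{-n-2}$. Pinning down which weight produces precisely this $b_n$ is bookkeeping. If the unweighted ball version gives a different constant, I would adjust the kernel until the optimization reproduces $b_n$.

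The main obstacle I expect is justifying the mean-value inequality for $u_\Omega$ on an immersed, possibly non-embedded cone with a singular vertex, and with $u_\Omega$ only Lipschitz. My first approach is to work intrinsically on $C$ with the pulled-back Euclidean distance function $|F-y_0|$. Its Hessian identity $\Delta|F-y_0|^2=2(n+1)$ holds because $F$ is minimal, and integrating by parts against $u_\Omega$ uses only $\Delta u_\Omega\ge0$ weakly. The vertex is removed by a cutoff whose energy tends to zero.

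Finally, distinct components are disjoint open subsets of $M$, so $\mathcal N_a\,b_n\Vol(\Sph^n)\le\Vol(M)$. This gives finiteness and the stated bound. The last inequality follows from
\[
 \frac{(n+3)^n}{(n+1)^n}=\Bigl(1+\frac{2}{n+1}\Bigr)^n<e^2 .
\]
The case $f(M)\subset\mathcal H_a$ is trivial.
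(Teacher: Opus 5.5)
Your architecture is essentially the paper's: the harmonic function $t\varphi_a$ on the minimal cone, its zero extension off the cone over one nodal domain, Michael--Simon monotonicity centred at the image of an interior maximum point and continued across the vertex, the containment $\{|F-y_0|<r\}\subset\{t<s+r\}$, optimisation in the radius, and summation over disjoint components. The gap is in the one quantitative step, and that step is the whole content of the statement.

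\textbf{Your computation does not give $b_n$.} The weighted lower bound is $\frac{2}{n+3}\beta_{n+1}r^{n+3}s\varphi_{\max}$ and your upper bound is $\frac{r^2(s+r)^{n+2}}{n+2}\varphi_{\max}\Vol(\Omega)$. Together they give
\[
 \Vol(\Omega)\ge\frac{2(n+2)\beta_{n+1}}{n+3}\,\frac{s\,r^{n+1}}{(s+r)^{n+2}} .
\]
With $s+r=1$, the maximiser of $s(1-s)^{n+1}$ is $s=1/(n+2)$, not $2/(n+3)$. The resulting constant is
\[
 \frac{2(n+1)^n}{(n+3)(n+2)^{n+1}}\,\omega_n
 =\frac12\Bigl(1+\frac1{n+2}\Bigr)^{n+1}b_n\omega_n ,
\]
which for $n=2$ equals $\frac{125}{128}b_2\omega_2<b_2\omega_2$. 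So the argument as written does not prove the theorem when $n=2$. ``Adjust the kernel until the optimization reproduces $b_n$'' is not a proof step. In fact no decreasing radial kernel can help: $\int w\,(r^2-R^2)_+\dd V=\int_0^r2\tau\bigl(\int_{\{R<\tau\}}w\dd V\bigr)\dd\tau$. The weighted inequality is therefore only an average of the plain-ball inequalities, and after optimisation it can never beat the best single radius.

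\textbf{The repair is to drop the weight.} With plain balls,
\[
 \beta_{n+1}r^{n+1}s\varphi_{\max}\le\int_{\{|F-y_0|<r\}}u_\Omega\dd V\le\frac{(s+r)^{n+2}}{n+2}\varphi_{\max}\Vol(\Omega).
\]
The optimal choice $r=(n+1)s$ gives
\[
 \Vol(\Omega)\ge\frac{(n+1)^n}{(n+2)^{n+1}}\,\omega_n>b_n\omega_n ,
\]
where the last inequality holds because $(n+3)^{n+2}/(n+2)^{n+1}>n+3>4$. So your linear $u_\Omega$ actually yields a stronger bound, of order $1/n$ rather than $1/n^2$.

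The paper's exact $b_n$ comes from applying the plain-ball monotonicity to the zero extension of the square $t^2\varphi_a^2$. The exponents $n+2$ and $n+3$ come from $\int_0^{1+\rho}t^{n+2}\dd t$, not from a kernel. The optimum $\rho=(n+1)/2$ at $s=1$ corresponds to the normalised value $2/(n+3)$ you guessed. The paper works with the square because that $L^\infty$--$L^2$ estimate is also what its dimension count in the linearly full case requires. For counting nodal domains, your $L^\infty$--$L^1$ version suffices.

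\textbf{A smaller point on subharmonicity.} $u_\Omega$ is not locally $\max(u,0)$ at singular points of the nodal set where another positive nodal domain touches $\partial\Omega$ (compare $u=xy$ at the origin). Subharmonicity still holds: $(u-\epsilon)_+$ times the indicator of the cone over $\Omega$ is locally either $\max(u-\epsilon,0)$ or $0$. This is because the closure of the part of that cone where $u>\epsilon$ stays inside the cone. Then let $\epsilon\downarrow0$; this is the truncation behind \Cref{lem:weak-zero-extension}.
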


\begin{remark}\label{rem:CLY-partition-comparison}
The conclusions of \Cref{thm:CLY-partition,thm:hyperplane-partition} have different strengths.  Cheng--Li--Yau control the total number of nodal domains through the spectral position of the eigenvalue $n$ and require maximal dimension.  By contrast, \Cref{thm:hyperplane-partition} requires no linear fullness assumption and first gives the componentwise estimate
\[
 \Vol(\Omega)\ge b_n\Vol(\Sph^n).
\]
Only afterwards do we sum over the components.  Moreover,
\[
 b_n^{-1}
 =\frac{(n+3)^{n+2}}{4(n+1)^n}
 <\frac{e^2}{4}(n+3)^2,
\]
so the coefficient multiplying the total volume has polynomial growth of order $n^2$, whereas $B_n^{\rm CLY}/2$ contains the factor $\exp(2nC_n^{\rm CLY})$.  Since the Cheng--Li--Yau estimate is expressed in terms of $\Vol(M)/\Vol(\Sph^n)-1$ and also contains the additive term $q+3/2$, the two numerical upper bounds are not asserted to be pointwise ordered for every small volume ratio; the improvement here is the componentwise geometric estimate, the removal of the maximal-dimension assumption, and the polynomial dependence on the dimension.
\end{remark}

For a normalized additive form, define
\begin{equation}\label{eq:full-constants}
 \delta_n=
 \frac{\varepsilon_n b_n}
 {1+\varepsilon_n-(n+1)b_n},
 \qquad
 \rho_n=
 \frac{1-(n+1)b_n}
 {1+\varepsilon_n-(n+1)b_n}.
\end{equation}

\begin{corollary}\label{cor:full-additive}
Under the hypotheses of \Cref{thm:full-main},
\begin{equation}\label{eq:full-additive}
 {
 \frac{\Vol(M)}{\Vol(\Sph^n)}
 \ge
 1+\delta_nq
 +\rho_n\left[(m_*-1)+\varepsilon_n(\mathcal K_*-1)\right].}
\end{equation}
In particular,
$
 \Vol(M)\ge(1+\delta_nq)\Vol(\Sph^n)
$
and
\begin{equation}\label{eq:delta-polynomial-intro}
 \delta_n>
 \frac{1}{440n(n+2)^2(n+3)^2}
 >\frac{1}{11000n^5}.
\end{equation}
\end{corollary}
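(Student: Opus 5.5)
The plan is to derive \eqref{eq:full-additive} as a convex combination of the two lower bounds in \Cref{thm:full-main}. Write $V=\Vol(M)/\Vol(\Sph^n)$, $A=m_*+\varepsilon_n\mathcal K_*$ and $B=b_n(n+q+1)$. \Cref{thm:full-main} gives $V\ge A$ and $V\ge B$. Hence, for every $\lambda\in[0,1]$,
\[
 V\ge\lambda A+(1-\lambda)B .
\]
I will choose $\lambda=\rho_n$, so that $1-\lambda=\varepsilon_n/\bigl(1+\varepsilon_n-(n+1)b_n\bigr)$.

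With this choice the $q$-independent constant term is
\[
 \rho_n(1+\varepsilon_n)+(1-\rho_n)(n+1)b_n
 =\frac{(1-(n+1)b_n)(1+\varepsilon_n)+\varepsilon_n(n+1)b_n}{1+\varepsilon_n-(n+1)b_n}
 =1,
\]
because the numerator simplifies to $1+\varepsilon_n-(n+1)b_n$. The coefficient of $q$ is $(1-\rho_n)b_n=\delta_n$. Writing $A=(1+\varepsilon_n)+\bigl[(m_*-1)+\varepsilon_n(\mathcal K_*-1)\bigr]$ then gives exactly \eqref{eq:full-additive}. The only thing to justify here is that $\rho_n\in[0,1]$, that is, $(n+1)b_n<1$. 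This holds because
\[
 (n+1)b_n=\frac{4(n+1)^{n+1}}{(n+3)^{n+2}}<\frac{4}{n+3}\le\frac45
\]
for $n\ge2$.

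Since $m_*\ge1$, $\mathcal K_*\ge1$ and $\rho_n\ge0$, the bracket is nonnegative, so $V\ge 1+\delta_n q$.

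For \eqref{eq:delta-polynomial-intro}, I bound the denominator of $\delta_n$ by $1+\varepsilon_n-(n+1)b_n<1+\varepsilon_n<2$. Hence
\[
 \delta_n>\frac{\varepsilon_n b_n}{1+\varepsilon_n}.
\]
I then use $\varepsilon_n>[110n(n+2)^2]^{-1}$. For $b_n$, I use
\[
 b_n=\frac{4}{(n+3)^2}\Bigl(1-\frac{2}{n+3}\Bigr)^n ,
\]
together with $(1-2/(n+3))^n\ge e^{-2n/(n+1)}$, which follows from $\log(1-x)\ge -x/(1-x)$. This yields $b_n> 4e^{-2}/(n+3)^2$.

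This lower bound for $b_n$ is the main obstacle. Combined with $1+\varepsilon_n<1.01$, it gives only about $\delta_n>(0.536)/(110n(n+2)^2(n+3)^2)$, whereas the target asks for $\delta_n>\tfrac14\cdot(110n(n+2)^2(n+3)^2)^{-1}$. So the constant $4e^{-2}/1.01\approx0.536>1/4$ comfortably suffices; only a clean inequality $4e^{-2n/(n+1)}/(1+\varepsilon_n)>1$ is needed, and it holds.

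The final inequality $440n(n+2)^2(n+3)^2<11000n^5$ reduces to $(1+2/n)^2(1+3/n)^2<25$. At $n=2$ the left side is $16\cdot 6.25=100$, which fails. So I will check small $n$ directly by evaluating $\delta_n$ numerically, using the exact $b_n$ and $\varepsilon_n$ from \eqref{eq:epsilon-definition}, for $n$ up to the threshold where $(1+2/n)(1+3/n)<5$ (about $n\ge3$ fails too, so up to about $n\approx 2$–$3$ handled separately). If the numerics fail there, I will instead use the sharper exact values in place of the crude bounds.
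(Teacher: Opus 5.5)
Your derivation of \eqref{eq:full-additive} and of $\Vol(M)\ge(1+\delta_nq)\Vol(\Sph^n)$ matches the paper's argument. The paper puts weight $\theta_n=\varepsilon_n/(1+\varepsilon_n-(n+1)b_n)=1-\rho_n$ on $b_n(n+q+1)$. It uses the same identity $(1-\theta_n)(1+\varepsilon_n)+\theta_n(n+1)b_n=1$ and the same bound $(n+1)b_n<4/(n+3)$.

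Your estimate $\delta_n>\varepsilon_nb_n/(1+\varepsilon_n)$ with $b_n>4e^{-2}/(n+3)^2$ is a slightly sharper variant of the paper's, which uses $b_n>1/[2(n+3)^2]$ and a denominator $<2$ to get exactly the factor $1/440$. It correctly proves the first inequality in \eqref{eq:delta-polynomial-intro}. There is one slip: the condition you need is $4e^{-2n/(n+1)}/(1+\varepsilon_n)>1/4$, not $>1$. The version with $1$ is false for every $n\ge3$ (about $0.89$ at $n=3$), although your numerical check $0.536>1/4$ is the right one.

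The real gap is the last inequality, which you leave unresolved because of an arithmetic error. At $n=2$ one has $(1+2/n)^2(1+3/n)^2=4\cdot 6.25=25$, not $16\cdot 6.25=100$. At $n=3$ it equals $100/9<25$, so $n=3$ does not fail either. Since $(1+2/n)(1+3/n)$ is decreasing in $n$, you get $(n+2)^2(n+3)^2\le 25n^4$ for all $n\ge2$, with equality only at $n=2$. This is the paper's one-line argument via $n+2\le 2n$ and $n+3\le 5n/2$.

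So there is no failure to repair, and your fallback would not have repaired one anyway. The middle inequality in \eqref{eq:delta-polynomial-intro} does not involve $\delta_n$, so evaluating $\delta_n$ numerically cannot prove it. Also, ``if the numerics fail, use sharper values'' is not an argument.

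The correct conclusion is that the middle ``$>$'' is an equality at $n=2$. It should be read as $\ge$, which is what the paper's own proof actually yields. The bound $\delta_n>1/(11000n^5)$ still holds strictly for every $n\ge2$, because the first inequality is strict. Your sharper bound even gives $\delta_n>2.1/(11000n^5)$ with no numerics at all.
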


\begin{remark}\label{rem:CLY-comparison}
The main max estimate \eqref{eq:full-max} can be substantially stronger in high codimension.  To compare bounds having the same normalized baseline $1$, we compare the codimension term in \Cref{cor:full-additive} with the explicit Cheng--Li--Yau term.  The ratio is
\[
 \frac{\delta_nq}{(2q-1)/B_n^{\rm CLY}}
 =\delta_nB_n^{\rm CLY}\frac{q}{2q-1}.
\]
For $q=2$ the first values are
\[
\begin{array}{c|c|c|c}
 n&2\delta_n&3/B_n^{\rm CLY}&(2\delta_n)/(3/B_n^{\rm CLY})\\ \hline
 2&4.1505\times10^{-5}&2.5818\times10^{-2}&1.61\times10^{-3}\\
 3&9.2827\times10^{-6}&6.9298\times10^{-10}&1.34\times10^{4}\\
 4&3.0347\times10^{-6}&3.8583\times10^{-56}&7.87\times10^{49}.
\end{array}
\]
Thus the Cheng--Li--Yau bound is stronger in dimension two, whereas our explicit coefficient is larger by four orders of magnitude in dimension three and by much more in higher dimensions.  Ding--Ge--Li \cite{DingGeLi2025} improved the heat-kernel denominator and then enlarged the gap further by an eigenvalue argument.  Their displayed constants still contain a rapidly growing exponential factor in the dimension, while \eqref{eq:delta-polynomial-intro} has polynomial size.  This comparison concerns only the stated explicit constants; no sharpness claim is made for either universal gap.
\end{remark}

The proof has five ingredients.  First, the level-set derivative of the height function gives the exact normal defect.  Second, the full differentiated Simons identity and a self-contained higher-codimensional Peng--Terng estimate yield
\[
 \sup_M|\nabla h|\le D_nS_*.
\]
Third, a vector-valued second-jet estimate converts this derivative bound into a lower bound for $\mathcal R_p$ on each sheet above $p$.  Fourth, in the linearly full case, the one-homogeneous extensions of all height functions to the Euclidean minimal cone are used simultaneously; their mean-value inequality produces the factor $n+q+1$ without a heat kernel.  Finally, localizing the same cone argument to the weakly subharmonic zero extension of the squared height function on one nodal cone proves \Cref{thm:hyperplane-partition}.

The paper is organized accordingly.  Basic notation, geometric identities, the exact height defect, and the two-remainder inequality are collected in \Cref{sec:prelim}.  The curvature scale and derivative estimate are established in \Cref{sec:scale}.  The vector-valued sheet estimate appears in \Cref{sec:vector-defect}.  The linearly full argument is given in \Cref{sec:full}.  The hyperplane partition theorem is proved in \Cref{sec:hyperplane-partition}, and the numerical and asymptotic estimates are collected in \Cref{sec:constants}.

\markbegin
\section{Preliminaries}\label{sec:prelim}

\subsection{Geometric notation and basic identities}
Throughout the paper all manifolds and immersions are smooth.  
Let
$
 f:M^n\looparrowright \Sph^{n+q}(1)\subset\R^{n+q+1}
$
be a closed immersed submanifold with induced metric $g$.  We write
$\nabla$ for the Levi--Civita connection of $M$, $\nabla^\perp$ for the normal connection of $M$ in the sphere, and $D$ for the Euclidean connection.  The second fundamental form $h$ and the shape operator $A_\xi$ in a normal direction $\xi$ are defined by
\[
 D_XY=\nabla_XY+h(X,Y)-\langle X,Y\rangle f,
 \qquad
 \langle A_\xi X,Y\rangle=\langle h(X,Y),\xi\rangle.
\]
The mean-curvature vector is
\[
 H=\frac1n\sum_{i=1}^nh(e_i,e_i),
\]
where $\{e_i\}_{i=1}^n$ is any local orthonormal tangent frame.  The immersion is minimal precisely when $H=0$.

Choose local orthonormal tangent and normal frames $\{e_i\}_{i=1}^n$ and $\{\nu_\alpha\}_{\alpha=1}^q$.  We use
\[
 h_{ij}^\alpha=\langle h(e_i,e_j),\nu_\alpha\rangle,
 \qquad
 A_\alpha=A_{\nu_\alpha}=(h_{ij}^\alpha),
 \qquad
 S=|h|^2=\sum_{i,j,\alpha}(h_{ij}^\alpha)^2
 =\sum_\alpha|A_\alpha|^2.
\]
All matrix inner products and norms are Hilbert--Schmidt.  Covariant derivatives are denoted by
\[
 h_{ijk}^\alpha=(\nabla h)_{ijk}^\alpha,
 \qquad
 |\nabla h|^2=\sum_{i,j,k,\alpha}(h_{ijk}^\alpha)^2.
\]
For a minimal immersion, $\operatorname{tr}A_\alpha=0$ for every normal index $\alpha$.

We use the curvature convention
\[
 R(X,Y)Z=\nabla_X\nabla_YZ-\nabla_Y\nabla_XZ-\nabla_{[X,Y]}Z
\]
and the analogous convention for the normal curvature $R^\perp$.  With these conventions the Gauss, Codazzi, and Ricci equations are
\begin{align*}
 R_{ijkl}
 &=\delta_{ik}\delta_{jl}-\delta_{il}\delta_{jk}
 +\sum_\alpha\bigl(h_{ik}^\alpha h_{jl}^\alpha-h_{il}^\alpha h_{jk}^\alpha\bigr),\\
 h_{ijk}^\alpha&=h_{ikj}^\alpha,\\
 R^\perp_{\alpha\beta kl}
 &=\sum_i\bigl(h_{ik}^\alpha h_{il}^\beta-h_{il}^\alpha h_{ik}^\beta\bigr)
 =\langle[A_\alpha,A_\beta]e_k,e_l\rangle.
\end{align*}
These formulas fix the index and sign conventions used later in the differentiated Simons computation.

For volumes we write, for every integer $k\ge1$,
\[
 \omega_k=\Vol(\Sph^k)
 =\frac{2\pi^{(k+1)/2}}{\Gamma((k+1)/2)},
 \qquad
 \beta_k=\Vol(\Ball^k)
 =\frac{\pi^{k/2}}{\Gamma(k/2+1)}
 =\frac{\omega_{k-1}}k.
\]

\subsection{Spherical height functions and the exact normal defect}

For $a\in\Sph^{n+q}$, define
\[
 \varphi_a=\langle f,a\rangle.
\]
Takahashi's identities \cite{Takahashi1966} are
\begin{equation}\label{eq:height-identities}
 \nabla\varphi_a=a^T,
 \qquad
 \Delta\varphi_a=-n\varphi_a,
 \qquad
 \int_M\varphi_a\dd V=0.
\end{equation}
The fixed Euclidean vector $a$ has the pointwise orthogonal decomposition
\[
 a=\varphi_af+\nabla\varphi_a+\xi_a,
 \qquad
 \xi_a=a^\perp\in\Gamma(NM).
\]
Consequently,
\begin{equation}\label{eq:normal-defect-pointwise}
 1-\varphi_a^2-|\nabla\varphi_a|^2=|\xi_a|^2\ge0.
\end{equation}

For $0<t<1$, set
\[
 I_a(t)=\int_{\{\varphi_a\ge t\}}\varphi_a\dd V,
 \qquad
 \Theta_a(t)=\frac{I_a(t)}{(1-t^2)^{n/2}}.
\]

\begin{proposition}\label{prop:Theta}
At every regular value $t\in(0,1)$,
\begin{equation}\label{eq:Theta-derivative}
 \Theta_a'(t)
 =-\frac{t}{(1-t^2)^{n/2+1}}
 \int_{\{\varphi_a=t\}}\frac{|\xi_a|^2}{|\nabla \varphi_a|}\dd\sigma.
\end{equation}
In particular, $\Theta_a$ is nonincreasing.
\end{proposition}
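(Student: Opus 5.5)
Differentiate $I_a$ with the coarea formula and express the volume integral as a boundary flux via $\Delta\varphi_a=-n\varphi_a$. At a regular value $t$, the set $\{\varphi_a\ge t\}$ is a compact domain with smooth boundary $\{\varphi_a=t\}$, and $\nabla\varphi_a\neq0$ on that boundary. Regular values form an open set by compactness, so near $t$ the coarea formula gives
\[
 I_a'(t)=-\int_{\{\varphi_a=t\}}\frac{\varphi_a}{|\nabla\varphi_a|}\dd\sigma
 =-t\int_{\{\varphi_a=t\}}\frac{\dd\sigma}{|\nabla\varphi_a|}.
\]
Next, by \eqref{eq:height-identities} and the divergence theorem on $\{\varphi_a\ge t\}$, whose outward normal is $-\nabla\varphi_a/|\nabla\varphi_a|$,
\[
 I_a(t)=-\frac1n\int_{\{\varphi_a\ge t\}}\Delta\varphi_a\dd V
 =\frac1n\int_{\{\varphi_a=t\}}|\nabla\varphi_a|\dd\sigma.
\]

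Differentiating the quotient gives
\[
 \Theta_a'(t)=\frac{(1-t^2)I_a'(t)+ntI_a(t)}{(1-t^2)^{n/2+1}}.
\]
Substituting the two formulas above, the numerator becomes
\[
 t\int_{\{\varphi_a=t\}}\Bigl(|\nabla\varphi_a|-\frac{1-t^2}{|\nabla\varphi_a|}\Bigr)\dd\sigma
 =-t\int_{\{\varphi_a=t\}}\frac{1-\varphi_a^2-|\nabla\varphi_a|^2}{|\nabla\varphi_a|}\dd\sigma.
\]
Here we used $\varphi_a=t$ on the level set. The pointwise identity \eqref{eq:normal-defect-pointwise} replaces the numerator in the integrand by $|\xi_a|^2$, which gives \eqref{eq:Theta-derivative}.

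For monotonicity, \eqref{eq:Theta-derivative} shows that $\Theta_a'\le0$ at every regular value in $(0,1)$. By Sard's theorem the critical values form a closed null set, but this alone does not yield monotonicity, so one needs continuity and absolute-continuity control.

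$\Theta_a$ is left-continuous and $I_a$ is monotone, so $\Theta_a$ is of bounded variation. The only possible upward jumps would come from $I_a$. Since $I_a$ is nonincreasing in $t$ (the integrand $\varphi_a$ is positive and the domain shrinks), its jumps occur only downward, at values $t$ where $\{\varphi_a=t\}$ has positive measure. The continuous part of the derivative of $I_a$ is given by the coarea formula and is $\le0$.

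The main obstacle is ruling out a singular-continuous increasing part of $\Theta_a$ concentrated on the critical values. I would handle it as follows. Write $I_a(t)=\int_t^1 s\,\mu(ds)$, where $\mu$ is the pushforward of $dV$ under $\varphi_a$, a positive measure. In the distributional sense,
\[
 d\Theta_a=\frac{-t\,\mu(dt)}{(1-t^2)^{n/2}}+\frac{ntI_a(t)}{(1-t^2)^{n/2+1}}\dd t.
\]
The second term is absolutely continuous. Its Lebesgue-a.e. comparison with the first term's absolutely continuous part is exactly the computation above, valid at a.e. $t$ because a.e. $t$ is regular. The singular part of $-t\mu$ is nonpositive. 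Hence $d\Theta_a\le0$ as a measure, and $\Theta_a$ is nonincreasing.
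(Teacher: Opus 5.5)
Your proof is correct. The derivative computation matches the paper's: divergence theorem for $nI_a(t)=\int_{\{\varphi_a=t\}}|\nabla\varphi_a|\,d\sigma$, coarea for $I_a'(t)$, the quotient rule, and then \eqref{eq:normal-defect-pointwise}. The difference lies in how you pass to monotonicity.

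The paper asserts that $I_a$, and hence $\Theta_a$, is locally absolutely continuous on $(0,1)$ ``by the coarea formula'', and then integrates the a.e.\ nonpositive derivative. Strictly, coarea only covers the part of $\{\varphi_a\ge t\}$ where $\nabla\varphi_a\ne0$. The claim therefore also needs $\{\nabla\varphi_a=0,\ \varphi_a>0\}$ to have zero volume. This is true, since by \eqref{eq:height-identities} the Hessian has trace $-n\varphi_a\ne0$ there, so the set lies locally in smooth hypersurfaces. The paper does not state this.

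Your Lebesgue decomposition of $d\Theta_a$ avoids the issue entirely. On the open, full-measure set of regular values the measure is $\Theta_a'\,dt$ with $\Theta_a'\le0$. The remaining singular part $-t(1-t^2)^{-n/2}\mu_s$ is nonpositive whatever $\mu_s\ge0$ is. Together with left-continuity, this gives monotonicity with no information about the critical set.

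What the paper's absolute continuity buys downstream is the exact integrated identity in \Cref{prop:exact-defect}. Your route loses nothing there either. At critical points $|\xi_a|^2=1-\varphi_a^2$, so $t(1-t^2)^{-n/2}\mu_s$ is exactly the push-forward under $\varphi_a$ of $\varphi_a|\xi_a|^2(1-\varphi_a^2)^{-n/2-1}\,dV$ restricted to the critical set. Hence the integrated formula holds unchanged.
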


\begin{proof}
Let $\Sigma_t=\{\varphi_a=t\}$ and orient it by the outward normal of $\{\varphi_a\ge t\}$, namely $-\nabla \varphi_a/|\nabla \varphi_a|$.  The divergence theorem and \eqref{eq:height-identities} give
\[
 nI_a(t)=\int_{\Sigma_t}|\nabla \varphi_a|\dd\sigma.
\]
The coarea formula gives, at every regular value,
\[
 I_a'(t)=-t\int_{\Sigma_t}\frac1{|\nabla \varphi_a|}\dd\sigma.
\]
Differentiating $\Theta_a$ and using these two identities yields
\begin{align*}
 \Theta_a'(t)
 &=\frac{t}{(1-t^2)^{n/2+1}}
 \left[
 \int_{\Sigma_t}|\nabla \varphi_a|\dd\sigma
 -(1-t^2)\int_{\Sigma_t}\frac1{|\nabla \varphi_a|}\dd\sigma
 \right]\\
 &=-\frac{t}{(1-t^2)^{n/2+1}}
 \int_{\Sigma_t}
 \frac{1-t^2-|\nabla \varphi_a|^2}{|\nabla \varphi_a|}\dd\sigma.
\end{align*}
Now use \eqref{eq:normal-defect-pointwise}.  More precisely, by the coarea formula, $I_a$ and hence $\Theta_a$ are locally absolutely continuous on every compact subinterval of $(0,1)$.  The derivative formula therefore holds for almost every $t$, and its right-hand side is nonpositive.  It follows that $\Theta_a$ is nonincreasing on $(0,1)$.
\end{proof}

We shall use the following height comparison from \cite[Theorem~3.5]{GeLi2022}.
\begin{lemma}[Ge--Li \cite{GeLi2022}]\label{lem:height-comparison}
For every $a\in\Sph^{n+q}$,
\[
 \int_{\{\varphi_a\ge0\}}(1+\varphi_a^2)\dd V
 \ge
 \frac{(n+2)\omega_n}{2(n+1)\beta_n}
 \int_{\{\varphi_a\ge0\}}\varphi_a\dd V.
\]
\end{lemma}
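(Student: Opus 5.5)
The statement immediately above is \Cref{lem:height-comparison}, quoted from \cite[Theorem~3.5]{GeLi2022}. I would prove it using only the monotonicity of $\Theta_a$ (\Cref{prop:Theta}) and the coarea formula. The idea is to express both sides as integrals in the level parameter $t$ and then compare them pointwise in $t$.

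\emph{Step 1: write everything through the distribution of $\varphi_a$.} Put $V(t)=\Vol\{\varphi_a\ge t\}$ for $t\ge0$. Then $I_a'(t)=tV'(t)$ almost everywhere, and the layer-cake formulas give
\[
 \int_{\{\varphi_a\ge0\}}\varphi_a\dd V=\int_0^1V(t)\dd t,
 \qquad
 \int_{\{\varphi_a\ge0\}}\varphi_a^2\dd V=\int_0^1 2tV(t)\dd t .
\]
The lemma is therefore the inequality
\[
 \int_0^1 (1+2t)V(t)\dd t\ \ge\ \frac{(n+2)\omega_n}{2(n+1)\beta_n}\int_0^1V(t)\dd t,
\]
up to the boundary term $V(0)$, which must be kept carefully.

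\emph{Step 2: use monotonicity to control the shape of $I_a$.} \Cref{prop:Theta} gives $I_a(t)\ge I_a(s)\,(1-t^2)^{n/2}/(1-s^2)^{n/2}$ for $t\le s$, and in particular $I_a(t)\le I_a(0)(1-t^2)^{n/2}$. Integrating by parts,
\[
 I_a(0)=\int_0^1 V(t)\dd t=\int_0^1\frac{-I_a'(t)}{t}\dd t .
\]
This is the same pair of integrals that appears for the equatorial hemisphere, where $I(t)=\beta_n(1-t^2)^{n/2}$ exactly. For the model the ratio of the two sides is precisely the constant, because
\[
 \int_{\Sph^n_+}\varphi\dd V=\beta_n,
 \qquad
 \int_{\Sph^n_+}(1+\varphi^2)\dd V=\frac{\omega_n}{2}\cdot\frac{n+2}{n+1}.
\]
So the constant is sharp.

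\emph{Step 3: reduce to a one-variable extremal problem.} Set $J=\int(1+\varphi_a^2)$. Writing $d\mu=-dV$ for the pushforward measure of $\varphi_a$ on $[0,1]$, we have $J=\int(1+t^2)\,d\mu$ and $I_a(0)=\int t\,d\mu$. The constraint is that $\Theta(t)=\int_{[t,1]}s\,d\mu(s)/(1-t^2)^{n/2}$ is nonincreasing. The goal is to minimize $J/I_a(0)$ over such measures. I would show the minimum is attained when $\Theta$ is constant, which is the hemisphere profile. The heuristic is that mass placed at larger $t$ gains more in $I$ per unit of $J$, but monotonicity caps how concentrated near $t=1$ the distribution can be.

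Concretely, write $I(t)=\Theta(t)(1-t^2)^{n/2}$ with $\Theta$ nonincreasing. Then
\[
 d\mu=-\frac{dI}{t},
 \qquad
 J=\int_0^1\frac{1+t^2}{t}\,(-dI).
\]
Integrating by parts expresses $J$ as $\int \Theta\,d(\text{kernel})$ plus a term $\int(\ldots)(1-t^2)^{n/2}(-d\Theta)$. The comparison then follows if the kernel weighting $-d\Theta$ in $J$ dominates the constant times the corresponding weight in $I(0)=\Theta(0)$. This amounts to checking an explicit one-variable inequality $g(s)\ge c$ for all $s\in(0,1)$. Here $g(s)$ is the value of $J/I(0)$ for the profile $\Theta=\mathbf 1_{[0,s]}$, that is, the truncated hemisphere with an atom at level $s$.

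\emph{Main obstacle.} The hard part is exactly this extremal inequality. One must verify that every step profile gives a ratio at least $\frac{(n+2)\omega_n}{2(n+1)\beta_n}$, and handle the $1/t$ singularity at $t=0$, where the contribution of $\{\varphi_a=0\}$-adjacent mass must be shown favorable. I would first try computing $g(s)$ explicitly in terms of incomplete Beta integrals and proving $g$ is minimized at $s\to1$, where it reduces to the hemisphere.
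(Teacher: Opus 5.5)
Your Step~3 reduction is the right one, but the proposal stops at the step that carries the content. The extremal inequality $g(s)\ge c_n$, with $c_n=\frac{(n+2)\omega_n}{2(n+1)\beta_n}$, is declared the main obstacle and left open. As written this is a plan rather than a proof.

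The missing step is short and needs no incomplete Beta functions. For the profile $\Theta=\mathbf 1_{(0,s]}$ one has $I(t)=(1-t^2)^{n/2}$ for $t\le s$ and $I(t)=0$ for $t>s$. So $\mu$ has density $n(1-t^2)^{n/2-1}$ on $(0,s)$ and an atom of mass $(1-s^2)^{n/2}/s$ at $s$, and
\[
 g(s)=n\int_0^s(1+t^2)(1-t^2)^{n/2-1}\dd t+\frac{(1+s^2)(1-s^2)^{n/2}}{s}.
\]
On differentiating, the terms $\pm n(1+s^2)(1-s^2)^{n/2-1}$ cancel, leaving
\[
 g'(s)=-\frac{(1-s^2)^{n/2+1}}{s^2}<0 .
\]
Your hemisphere computation gives $g(1^-)=n\int_0^1(1+t^2)(1-t^2)^{n/2-1}\dd t=c_n$. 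Hence $g(s)=c_n+G(s)$ with $G(s)=\int_s^1u^{-2}(1-u^2)^{n/2+1}\dd u>0$; for $n=2$, $g(s)=2s-s^3/3+1/s$. The singularity at $t=0$ is harmless: $g(s)\sim1/s$, so profiles cut off near $0$ are the most favorable ones, not the dangerous ones.

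You should also state the superposition precisely instead of appealing to ``integrating by parts''. Let $\Theta=\Theta_a$, nonincreasing and left-continuous, and put $\nu=-\dd\Theta$ on $(0,1)$. Then $\Theta(t)=\Theta(1^-)+\nu([t,1))$ and $\nu((0,1))\le I_a(0)<\infty$. Note that monotonicity forbids an atom of $\mu$ at $1$, since it would force $\Theta\to\infty$. The product rule $-\dd I=(1-t^2)^{n/2}\dd\nu+nt(1-t^2)^{n/2-1}\Theta\dd t$ and Fubini, with all integrands nonnegative, give
\[
 J=\mu(\{0\})+c_n\Theta(1^-)+\int_{(0,1)}g\dd\nu,
 \qquad
 I_a(0)=\Theta(0^+)=\Theta(1^-)+\nu((0,1)).
\]
Hence $J-c_nI_a(0)=\mu(\{0\})+\int_{(0,1)}G\dd\nu\ge0$. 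With these two additions your argument is a complete proof.

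Also correct two slips, which Step~3 does not use.
\begin{itemize}
\item The display in Step~1 replaces $V(0)=\Vol\{\varphi_a\ge0\}$ by $\int_0^1V\dd t=I_a(0)$. This is not a harmless boundary correction: the displayed inequality is false for the hemisphere, since for $n=2$ its two sides are $5\pi/3$ and $8\pi/3$.
\item In Step~2, $\int_0^1(-I_a'(t)/t)\dd t=V(0)$, not $I_a(0)$.
\end{itemize}

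As for comparison: the paper gives no proof of this lemma and simply quotes \cite[Theorem~3.5]{GeLi2022}. Your completed route derives it from \Cref{prop:Theta} alone, that is, from the pointwise bound $|\nabla\varphi_a|^2\le1-\varphi_a^2$ on level sets. This would make \Cref{sec:prelim} self-contained. It also yields more than the inequality. By \eqref{eq:Theta-derivative} and the coarea formula, exactly as in the proof of \Cref{prop:exact-defect},
\[
 \int_{(0,1)}G\dd\nu
 =\int_{\{\varphi_a>0\}}G(\varphi_a)\frac{\varphi_a|\xi_a|^2}{(1-\varphi_a^2)^{n/2+1}}\dd V .
\]
So the defect in the lemma is an explicit $G$-weighted analogue of $\mathcal R_p$, and it is discarded when the lemma is applied in \Cref{thm:two-remainders}.
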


\begin{lemma}\label{lem:endpoint-density}
Let $p\in f(M)$ and suppose
$f^{-1}(p)=\{x_1,\ldots,x_m\}$.  For $\varphi_p$,
\begin{equation}\label{eq:endpoint-density}
 \lim_{t\to1^-}\frac{\int_{\{\varphi_p\ge t\}}\varphi_p\dd V}
 {(1-t^2)^{n/2}}=m\beta_n.
\end{equation}
\end{lemma}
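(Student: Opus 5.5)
We will localize near each preimage point and use that $f$ is an immersion, so near $x_j$ the image is a smooth $n$-dimensional sheet through $p$ tangent to $T_j=df(T_{x_j}M)$. Since $|f|=|p|=1$, we have $\varphi_p=1-\tfrac12|f-p|^2$. Hence $\{\varphi_p\ge t\}=\{|f-p|^2\le 2(1-t)\}$, which is a small extrinsic ball around $p$ pulled back to $M$. For $t$ close to $1$, this set has exactly $m$ components $U_j(t)\ni x_j$. The reason is that $f^{-1}(p)$ is finite (the immersion is locally injective and $M$ is compact), and by compactness $|f-p|$ is bounded below outside any fixed neighbourhood of $f^{-1}(p)$. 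Therefore it suffices to show, for each $j$,
\[
 \lim_{t\to1^-}\frac{1}{(1-t^2)^{n/2}}\int_{U_j(t)}\varphi_p\dd V=\beta_n ,
\]
and then sum over $j$.

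Fix $j$ and take normal (or graph) coordinates $u\in\R^n$ centred at $x_j$, writing $f(u)=p+u+O(|u|^2)$ with $u\in T_j$ identified isometrically. Then $|f(u)-p|^2=|u|^2(1+O(|u|))$, and the volume density is $1+O(|u|)$. Set $r=\sqrt{2(1-t)}$. The region $U_j(t)$ is squeezed between the coordinate balls $\{|u|\le r(1-Cr)\}$ and $\{|u|\le r(1+Cr)\}$. On $U_j(t)$ we also have $t\le\varphi_p\le1$, so $\varphi_p=1+O(r^2)$. This gives
\[
 \int_{U_j(t)}\varphi_p\dd V=\beta_n r^n\bigl(1+O(r)\bigr)=\beta_n\,2^{n/2}(1-t)^{n/2}\bigl(1+O(\sqrt{1-t})\bigr).
\]
Since $(1-t^2)^{n/2}=(1-t)^{n/2}(1+t)^{n/2}$ and $(1+t)^{n/2}\to2^{n/2}$, the ratio tends to $\beta_n$.

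The only delicate point is the uniformity of the squeeze. We need the comparison $|f(u)-p|\approx|u|$ to be two-sided with a $1+O(|u|)$ error on a fixed neighbourhood. This follows from Taylor expansion of the smooth immersion together with $df_{x_j}$ being an isometric injection. We also need the statement that no other parts of $M$ enter $\{\varphi_p\ge t\}$ for $t$ near $1$, which is the compactness argument above. Neither minimality nor \Cref{prop:Theta} is needed, although monotonicity of $\Theta_p$ guarantees that the limit exists a priori.
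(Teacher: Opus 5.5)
Your proof is correct and takes essentially the same route as the paper. Both localize to disjoint neighbourhoods of the $m$ preimages, use a second-order expansion at each $x_j$ to compare $\{\varphi_p\ge t\}$ with a Euclidean ball of radius asymptotic to $\sqrt{1-t^2}$, and sum over sheets. The differences are cosmetic: you get the quadratic behaviour from the identity $2(1-\varphi_p)=|f-p|^2$ rather than from the Hessian formula $\nabla^2\varphi_p(x_j)=-g$, and you use an explicit two-ball squeeze instead of rescaling plus dominated convergence, which also gives the rate $O(\sqrt{1-t})$. (Your argument only uses that the superlevel set lies in the fixed neighbourhoods, not that each $U_j(t)$ is connected, so the ``exactly $m$ components'' claim is harmless.)
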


\begin{proof}
Because $f$ is an immersion, every point of $f^{-1}(p)$ is isolated: in a sufficiently small coordinate neighborhood the map is an embedding.  Thus $f^{-1}(p)$ is a discrete closed subset of the compact manifold $M$, and is therefore finite.  At every $x_j$ one has
\[
 \varphi_p(x_j)=1,
 \qquad
 \nabla \varphi_p(x_j)=0,
 \qquad
 \xi_p(x_j)=0.
\]
The Hessian formula proved in \Cref{lem:normal-identities} below gives $\nabla^2\varphi_p(x_j)=-g$.  Hence each $x_j$ is a nondegenerate maximum.  In geodesic normal coordinates $y$ centered at $x_j$,
\[
 \varphi_p(\exp_{x_j}y)=1-\frac{|y|^2}{2}+O(|y|^3),
 \qquad
 \dd V=(1+O(|y|^2))\dd y.
\]
For $t$ sufficiently close to $1$, the set $\{\varphi_p\ge t\}$ is the disjoint union of $m$ neighborhoods of the points $x_j$.  Put $\rho=(1-t^2)^{1/2}$ and rescale $y=\rho z$.  Since
\[
 t=1-\frac{\rho^2}{2}+O(\rho^4),
\]
the rescaled component of $\{\varphi_p\ge t\}$ converges to the Euclidean unit ball.  The displayed Taylor expansions and dominated convergence therefore give
\[
 \rho^{-n}\int_{\{\varphi_p\ge t\}\cap U_j}\varphi_p\dd V\longrightarrow\beta_n
\]
for every $j$.  Summing the $m$ components proves \eqref{eq:endpoint-density}.
\end{proof}

\begin{proposition}\label{prop:exact-defect}
Let $p\in f(M)$ and $f^{-1}(p)=\{x_1,\ldots,x_m\}$.  Then
\begin{equation}\label{eq:exact-defect}
 \int_{\{\varphi_p\ge0\}}\varphi_p\dd V
 =m\beta_n+\mathcal R_p,
\end{equation}
where
\begin{equation}\label{eq:defect-definition}
 \mathcal R_p=
 \int_{\{\varphi_p>0\}}
 \frac{\varphi_p|p^\perp|^2}
 {(1-\varphi_p^2)^{n/2+1}}\dd V.
\end{equation}
The integral is finite and nonnegative.
\end{proposition}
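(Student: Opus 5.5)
We integrate the monotonicity formula of \Cref{prop:Theta} (with $a=p$) over $t\in(0,1)$. This expresses the drop of $\Theta_p$ between its endpoint values as an integral, which the coarea formula then turns back into $\mathcal R_p$.

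\emph{Step 1: the endpoint values.} As $t\to0^+$, $\Theta_p(t)\to I_p(0)=\int_{\{\varphi_p\ge0\}}\varphi_p\dd V$. Indeed, $I_p(t)\to\int_{\{\varphi_p>0\}}\varphi_p\dd V$ by monotone convergence, and the set $\{\varphi_p=0\}$ contributes nothing because the integrand vanishes there. As $t\to1^-$, $\Theta_p(t)\to m\beta_n$ by \Cref{lem:endpoint-density}.

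\emph{Step 2: integrate the derivative.} The proof of \Cref{prop:Theta} shows that $\Theta_p$ is locally absolutely continuous on $(0,1)$ and that \eqref{eq:Theta-derivative} holds for a.e.\ $t$; Sard's theorem gives that a.e.\ $t$ is a regular value. For $0<s<u<1$ we therefore get
\[
 \Theta_p(s)-\Theta_p(u)=\int_s^u\frac{t}{(1-t^2)^{n/2+1}}\int_{\{\varphi_p=t\}}\frac{|\xi_p|^2}{|\nabla\varphi_p|}\dd\sigma\dd t .
\]
To rewrite the right-hand side, apply the coarea formula to the nonnegative function $\varphi_p|\xi_p|^2(1-\varphi_p^2)^{-n/2-1}$ on $\{s<\varphi_p<u\}$, noting that $\varphi_p=t$ on the level set. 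The right-hand side becomes $\int_{\{s<\varphi_p<u\}}\varphi_p|\xi_p|^2(1-\varphi_p^2)^{-n/2-1}\dd V$.

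The coarea formula requires care on the critical set of $\varphi_p$ inside $\{s<\varphi_p<u\}$. This set is handled as follows. On it $|\nabla\varphi_p|=0$, so the coarea identity with Jacobian $|\nabla\varphi_p|$ assigns it no weight. I would write the integrand as $F|\nabla\varphi_p|$ with $F=\varphi_p|\xi_p|^2(1-\varphi_p^2)^{-n/2-1}/|\nabla\varphi_p|$ off the critical set. Then use the general form $\int_{\{\nabla\varphi_p\neq0\}}F|\nabla\varphi_p|\dd V=\int\int_{\Sigma_t}F\dd\sigma\dd t$. Finally, check that the critical set with $\xi_p\ne0$ has measure zero, or else accept the identity as stated in \Cref{prop:Theta}.

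In fact no such check is needed. Absolute continuity of $\Theta_p$ came from the coarea formula applied to $I_p$, whose derivative is $-t\int_{\Sigma_t}|\nabla\varphi_p|^{-1}\dd\sigma$. That formula already presumes the critical part of $\{\varphi_p>0\}$ is null, or at least absorbed consistently. So I would simply reuse the same coarea convention as in \Cref{prop:Theta}.

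\emph{Step 3: take limits.} Let $s\to0^+$ and $u\to1^-$. The integrand is nonnegative, so monotone convergence gives
\[
 I_p(0)-m\beta_n=\int_{\{0<\varphi_p<1\}}\frac{\varphi_p|\xi_p|^2}{(1-\varphi_p^2)^{n/2+1}}\dd V .
\]
This is finite because the left-hand side is finite. The set $\{\varphi_p=1\}=f^{-1}(p)$ is finite and hence null, so the right-hand side equals $\mathcal R_p$. This proves \eqref{eq:exact-defect}, and nonnegativity of $\mathcal R_p$ is immediate.

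The main obstacle is justifying the fundamental theorem of calculus on the open interval. Near $t=1$ the weight $(1-t^2)^{-n/2-1}$ blows up. This is harmless, since we pass to the limit only after integrating over compact subintervals, and finiteness comes out of the identity itself. Near $t=1$ one can also confirm finiteness directly: $|\xi_p|^2=O(|y|^4)$ near each $x_j$, by the Hessian formula of \Cref{lem:normal-identities} and $\xi_p(x_j)=0$ with vanishing first derivative, and this makes the integrand locally bounded by $O(|y|^{2-n})$, which is integrable.
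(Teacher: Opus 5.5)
Your proposal is correct and follows essentially the same route as the paper: integrate \eqref{eq:Theta-derivative} over compact subintervals of $(0,1)$, turn the right side back into a volume integral by the coarea formula, and then pass to the limits. At $t\to0^+$ you use monotone convergence, at $t\to1^-$ you use \Cref{lem:endpoint-density}, and the finite set $f^{-1}(p)$ is null. The critical-set issue you flag is harmless and the paper passes over it silently: almost everywhere on $\{\nabla\varphi_p=0\}$ one has $\nabla^2\varphi_p=0$, hence $\varphi_p=-\tfrac1n\Delta\varphi_p=0$, so that set is null inside $\{\varphi_p>0\}$.
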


\begin{proof}
  Integrating \eqref{eq:Theta-derivative} between regular values $0<a<b<1$ and applying the coarea formula gives
\[
 \Theta_p(a)-\Theta_p(b)
 =\int_{\{a<\varphi_p<b\}}
 \frac{\varphi_p|p^\perp|^2}{(1-\varphi_p^2)^{n/2+1}}\dd V.
\]
Choose regular sequences $a_k\downarrow0$ and $b_k\uparrow1$.  Since the integrand $\varphi_p$ vanishes on $\{\varphi_p=0\}$, monotone convergence gives
$\Theta_p(a_k)\to\int_{\{\varphi_p\ge0\}}\varphi_p\,\dd V$, while \Cref{lem:endpoint-density} gives $\Theta_p(b_k)\to m\beta_n$.  The integrands on the right are nonnegative and the domains increase to $\{0<\varphi_p<1\}$.  The omitted level $\{\varphi_p=1\}=f^{-1}(p)$ is finite and hence has zero measure.  Monotone convergence therefore gives \eqref{eq:exact-defect} and simultaneously proves the finiteness of \eqref{eq:defect-definition}.
\end{proof}

\begin{theorem}\label{thm:two-remainders}
For every multiplicity-$m$ point $p\in f(M)$,
\[
 \Vol(M)\ge m\omega_n
 +\frac{\omega_n}{\beta_n}\mathcal R_p
 +\frac1{n+2}\int_M|p^\perp|^2\dd V.
\]
\end{theorem}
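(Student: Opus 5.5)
The plan is to combine the exact defect identity of \Cref{prop:exact-defect} with the height comparison of \Cref{lem:height-comparison}. We apply the comparison to both $p$ and $-p$, and then eliminate the term $\int\varphi_p^2$ by an integral identity coming from Takahashi's formulas \eqref{eq:height-identities} and the pointwise normal decomposition \eqref{eq:normal-defect-pointwise}.

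\emph{Step 1 (the $L^1$ mass on each side).} By \Cref{prop:exact-defect},
\[
 \int_{\{\varphi_p\ge0\}}\varphi_p\dd V=m\beta_n+\mathcal R_p .
\]
Since $\int_M\varphi_p\dd V=0$, the same quantity equals $\int_{\{\varphi_{-p}\ge0\}}\varphi_{-p}\dd V$, because $\varphi_{-p}=-\varphi_p$.

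\emph{Step 2 (two-sided height comparison).} Apply \Cref{lem:height-comparison} with $a=p$ and with $a=-p$, and add the two inequalities. The left-hand sides combine to $\int_M(1+\varphi_p^2)\dd V$ plus a contribution from $\{\varphi_p=0\}$, which is counted twice. This set has measure zero: $\varphi_p$ is a nontrivial eigenfunction of $\Delta$ (it equals $1$ at $x_1$), so its nodal set is null by unique continuation. We obtain
\[
 \Vol(M)+\int_M\varphi_p^2\dd V\ \ge\ \frac{(n+2)\omega_n}{(n+1)\beta_n}\bigl(m\beta_n+\mathcal R_p\bigr).
\]

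\emph{Step 3 (eliminating $\int\varphi_p^2$).} From $\Delta\varphi_p=-n\varphi_p$ we get $\tfrac12\Delta\varphi_p^2=|\nabla\varphi_p|^2-n\varphi_p^2$. Integrating over the closed manifold gives $\int_M|\nabla\varphi_p|^2=n\int_M\varphi_p^2$. Integrating \eqref{eq:normal-defect-pointwise} then yields
\[
 \int_M|p^\perp|^2\dd V=\Vol(M)-(n+1)\int_M\varphi_p^2\dd V,
\]
so $\int_M\varphi_p^2=\bigl(\Vol(M)-\int_M|p^\perp|^2\bigr)/(n+1)$. Substituting this into Step 2 gives
\[
 \frac{n+2}{n+1}\Vol(M)-\frac1{n+1}\int_M|p^\perp|^2\ \ge\ \frac{n+2}{n+1}\Bigl(m\omega_n+\frac{\omega_n}{\beta_n}\mathcal R_p\Bigr).
\]
Multiplying by $(n+1)/(n+2)$ gives exactly the claimed inequality.

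\emph{Main obstacle.} The only delicate point is the bookkeeping in Step 2: we must use the full-sphere identity $\int_M\varphi_p=0$ to transfer the defect identity to the negative side, and justify discarding the nodal set $\{\varphi_p=0\}$. Everything else is routine algebra once \Cref{lem:height-comparison} is available.
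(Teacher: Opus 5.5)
Your proposal is correct and follows the paper's proof essentially step by step. You apply \Cref{lem:height-comparison} to $\pm p$ and discard the null nodal set, use $\int_M\varphi_p\dd V=0$, eliminate $\int_M\varphi_p^2\dd V$ via the identity $\Vol(M)=(n+1)\int_M\varphi_p^2\dd V+\int_M|p^\perp|^2\dd V$, and finish with \Cref{prop:exact-defect}. The only difference is cosmetic: you solve for $\int_M\varphi_p^2\dd V$ and substitute, whereas the paper writes $\Vol(M)$ as the convex combination $\frac{n+1}{n+2}\bigl(\Vol(M)+\int_M\varphi_p^2\dd V\bigr)+\frac1{n+2}\int_M|p^\perp|^2\dd V$.
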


\begin{proof}
Apply \Cref{lem:height-comparison} to $\varphi_p$ and to $-\varphi_p$.  Since $\varphi_p=1$ at every point of $f^{-1}(p)$, the function is not identically zero.  The standard nodal-set theorem for a nontrivial solution of the elliptic equation $\Delta\varphi_p+n\varphi_p=0$ implies that $\{\varphi_p=0\}$ has zero $n$-dimensional measure.  Since $\int_M\varphi_p\dd V=0$, the positive and negative parts of $\varphi_p$ have the same integral, and addition gives
\[
 \frac{n+1}{n+2}\int_M(1+\varphi_p^2)\dd V
 \ge\frac{\omega_n}{\beta_n}\int_{\{\varphi_p\ge0\}}\varphi_p\dd V.
\]
On the other hand, integration by parts in \eqref{eq:height-identities} gives
\[
 \int_M|\nabla \varphi_p|^2\dd V=n\int_M\varphi_p^2\dd V.
\]
Integrating \eqref{eq:normal-defect-pointwise} therefore yields
\[
 \Vol(M)=(n+1)\int_M\varphi_p^2\dd V+\int_M|p^\perp|^2\dd V.
\]
Consequently,
\begin{align*}
 \Vol(M)
 &=\frac{n+1}{n+2}
 \left(\Vol(M)+\int_M\varphi_p^2\dd V\right)
 +\frac1{n+2}\int_M|p^\perp|^2\dd V\\
 &\ge\frac{\omega_n}{\beta_n}
 \int_{\{\varphi_p\ge0\}}\varphi_p\dd V
 +\frac1{n+2}\int_M|p^\perp|^2\dd V.
\end{align*}
Now use \Cref{prop:exact-defect}.
\end{proof}

\markend

\section{An explicit curvature scale}\label{sec:scale}

We use the notation and curvature conventions fixed in \Cref{sec:prelim}.  In particular,
$A_\alpha=(h_{ij}^\alpha)$, $S=|h|^2=\sum_\alpha|A_\alpha|^2$, and
$\operatorname{tr}A_\alpha=0$ for a minimal immersion.  Define
\[
 R_1=
 \sum_{\alpha,\beta}\langle A_\alpha,A_\beta\rangle^2
 +\sum_{\alpha,\beta}|[A_\alpha,A_\beta]|^2.
\]
The Simons identity and the Li--Li matrix inequality are
\begin{equation}\label{eq:Simons-LiLi}
 \frac12\Delta S=|\nabla h|^2+nS-R_1,
 \qquad
 R_1\le\frac32S^2;
\end{equation}
see \cite{Simons1968,LiLi1992,Lu2011}.  At a maximum point of $S$, these formulas imply that every non-totally-geodesic immersion satisfies
\begin{equation}\label{eq:first-curvature-gap}
 S_*\ge\frac{2n}{3}.
\end{equation}

\subsection{Trace-free estimates and separation of sheets}

\begin{lemma}\label{lem:trace-free}
For every unit tangent vector $X$,
\[
 \sum_\alpha|A_\alpha X|^2\le\frac{n-1}{n}S.
\]
For every orthonormal pair $X,Y$,
\[
 \langle h(X,X),h(Y,Y)\rangle
 \le\frac{n-2}{2n}S.
\]
When $n=2$, the right side of the second inequality is zero.
\end{lemma}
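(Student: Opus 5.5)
Both inequalities are pointwise statements about a single normal direction at a time, so we reduce to the elementary fact about a trace-free symmetric matrix and then sum over an orthonormal normal frame $\{\nu_\alpha\}$.

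\emph{First inequality.} Complete the unit vector to an orthonormal frame with $e_1=X$. For each $\alpha$,
\[
 |A_\alpha X|^2=\sum_j(h_{1j}^\alpha)^2=(h_{11}^\alpha)^2+\sum_{j\ge2}(h_{1j}^\alpha)^2 .
\]
Since $\tr A_\alpha=0$, we have $h_{11}^\alpha=-\sum_{i\ge2}h_{ii}^\alpha$. Cauchy--Schwarz then gives
\[
 \sum_{i\ge2}(h_{ii}^\alpha)^2\ge\frac{(h_{11}^\alpha)^2}{n-1},
\]
and by symmetry the off-diagonal entries $h_{1j}^\alpha$, $j\ge2$, each occur twice in $|A_\alpha|^2$. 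Hence
\[
 |A_\alpha|^2\ge (h_{11}^\alpha)^2\Bigl(1+\tfrac1{n-1}\Bigr)+2\sum_{j\ge2}(h_{1j}^\alpha)^2 .
\]
Because $\tfrac{n}{n-1}\ge1$ and $2\ge\tfrac{n}{n-1}$ for $n\ge2$, both terms on the right dominate $\tfrac{n}{n-1}$ times the corresponding terms of $|A_\alpha X|^2$. Thus $|A_\alpha X|^2\le\frac{n-1}{n}|A_\alpha|^2$, and summing over $\alpha$ gives the first inequality.

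\emph{Second inequality.} Take $e_1=X$ and $e_2=Y$, and write $a=h_{11}^\alpha$, $b=h_{22}^\alpha$. The trace-free condition gives $\sum_{i\ge3}h_{ii}^\alpha=-(a+b)$.
\begin{itemize}
 \item If $n\ge3$, Cauchy--Schwarz gives $\sum_{i\ge3}(h_{ii}^\alpha)^2\ge (a+b)^2/(n-2)$, and discarding the off-diagonal entries,
 \[
 |A_\alpha|^2\ge a^2+b^2+\frac{(a+b)^2}{n-2}.
 \]
 We then need $2n\,ab\le(n-2)(a^2+b^2)+(a+b)^2$, which is equivalent to $(n-1)(a-b)^2\ge0$. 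Hence $ab\le\frac{n-2}{2n}|A_\alpha|^2$.
 \item If $n=2$, then $b=-a$, so $ab=-a^2\le0$, which is the stated zero right-hand side.
\end{itemize}
Summing over $\alpha$ and using $\langle h(X,X),h(Y,Y)\rangle=\sum_\alpha h_{11}^\alpha h_{22}^\alpha$ gives the second inequality.

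The main point needing care is the case distinction $n=2$, where the Cauchy--Schwarz step is unavailable but the trace-free condition directly forces $ab=-a^2\le0$.
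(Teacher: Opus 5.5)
Your proof is correct and follows the paper's argument. For the first inequality you use the same Cauchy--Schwarz bound on the trace-free complement, and for the second you use the bound $|A_\alpha|^2\ge a^2+b^2+(a+b)^2/(n-2)$, with $b=-a$ when $n=2$. The only difference is cosmetic: you check $2n\,ab\le(n-2)(a^2+b^2)+(a+b)^2$ for all $a,b$ via $(n-1)(a-b)^2\ge0$, whereas the paper first splits off the trivial case $a_\alpha b_\alpha\le0$; your identity shows that split is unnecessary.
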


\begin{proof}
For a trace-free symmetric matrix $A$, choose an orthonormal basis with $X=e_1$ and write
\[
 A=\begin{pmatrix}a&v^T\\v&B\end{pmatrix},
 \qquad
 \tr B=-a.
\]
Then
\[
 |AX|^2=a^2+|v|^2,
 \qquad
 |A|^2=a^2+2|v|^2+|B|^2.
\]
Since $|B|^2\ge a^2/(n-1)$ and $2\ge n/(n-1)$,
\[
 |A|^2\ge\frac{n}{n-1}(a^2+|v|^2).
\]
Apply this to each $A_\alpha$ and sum over $\alpha$.

For the second inequality, fix $\alpha$ and put
\[
 a_\alpha=\langle A_\alpha X,X\rangle,
 \qquad
 b_\alpha=\langle A_\alpha Y,Y\rangle.
\]
If $a_\alpha b_\alpha\le0$, then
$a_\alpha b_\alpha\le[(n-2)/(2n)]|A_\alpha|^2$ is immediate.  If $a_\alpha b_\alpha>0$ and $n\ge3$, trace freeness and Cauchy--Schwarz on the orthogonal complement of $\operatorname{span}\{X,Y\}$ give
\[
 |A_\alpha|^2
 \ge a_\alpha^2+b_\alpha^2
 +\frac{(a_\alpha+b_\alpha)^2}{n-2}
 \ge\frac{2n}{n-2}a_\alpha b_\alpha.
\]
Summing over $\alpha$ proves the result.  If $n=2$, trace freeness gives $b_\alpha=-a_\alpha$.
\end{proof}

Set
\[
 \kappa_n=\frac{n+1}{2n},
 \qquad
 \lambda_n=\frac{2n+1}{2n}.
\]

\begin{lemma}\label{lem:scale-separation}
The induced metric satisfies
\begin{equation}\label{eq:sectional-upper}
 \operatorname{sec}_M\le\kappa_nS_*.
\end{equation}
Moreover,
\begin{equation}\label{eq:injectivity-lower}
 \inj(M)\ge\frac{\pi}{2\sqrt{\lambda_nS_*}}.
\end{equation}
If $x\ne y$ and $f(x)=f(y)$, then
\begin{equation}\label{eq:sheet-separation}
 d_M(x,y)\ge\frac{\pi}{\sqrt{\lambda_nS_*}}.
\end{equation}
\end{lemma}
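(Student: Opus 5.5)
The three assertions all come from pointwise bounds on $h$ given by \Cref{lem:trace-free}, combined with the first gap \eqref{eq:first-curvature-gap} in the form $1\le \frac{3}{2n}S_*$. This last inequality is what lets us absorb the constant terms coming from the ambient unit sphere into multiples of $S_*$.

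\emph{Sectional curvature.} For an orthonormal pair $X,Y$, the Gauss equation gives
\[
 \sec(X,Y)=1+\langle h(X,X),h(Y,Y)\rangle-|h(X,Y)|^2\le 1+\frac{n-2}{2n}S .
\]
We bound $S\le S_*$ and $1\le\frac{3}{2n}S_*$. The right side is then at most $\frac{3+n-2}{2n}S_*=\kappa_nS_*$, which is \eqref{eq:sectional-upper}.

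\emph{Euclidean curvature of geodesics.} Let $\gamma$ be a unit-speed geodesic of $M$, viewed as the curve $f\circ\gamma$ in $\R^{n+q+1}$. By the defining relation for $h$,
\[
 D_{\gamma'}\gamma'=h(\gamma',\gamma')-\gamma .
\]
The two terms on the right are orthogonal, so the Euclidean curvature $k$ satisfies
\[
 k^2=1+|h(\gamma',\gamma')|^2\le 1+\sum_\alpha|A_\alpha\gamma'|^2\le 1+\frac{n-1}{n}S_*\le\Bigl(\frac{3}{2n}+\frac{n-1}{n}\Bigr)S_*=\lambda_nS_*,
\]
using the first inequality of \Cref{lem:trace-free}. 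Thus every geodesic of $M$ is a Euclidean curve with curvature at most $\sqrt{\lambda_nS_*}$.

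\emph{Sheet separation.} Suppose $x\ne y$ and $f(x)=f(y)$. Take a minimizing geodesic $\gamma$ of length $L=d_M(x,y)$ from $x$ to $y$; it exists because $M$ is compact. Its image $f\circ\gamma$ is a closed loop in Euclidean space that is smooth except possibly for one corner at $f(x)$, with exterior angle at most $\pi$. Fenchel's theorem for piecewise smooth closed curves says the total curvature, including the corner angle, is at least $2\pi$. Hence
\[
 \int_\gamma k\,ds\ge\pi,
\]
and therefore $L\sqrt{\lambda_nS_*}\ge\pi$, which is \eqref{eq:sheet-separation}.

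\emph{Injectivity radius.} By Klingenberg's lemma,
\[
 \inj(M)\ge\min\Bigl\{\frac{\pi}{\sqrt{\kappa_nS_*}},\ \frac{\ell}{2}\Bigr\},
\]
where $\ell$ is the length of the shortest nontrivial closed geodesic. For a smooth closed geodesic, Fenchel's theorem without a corner gives $\int k\ge2\pi$, so $\ell\ge 2\pi/\sqrt{\lambda_nS_*}$. Since $\kappa_n<\lambda_n$, both terms in the minimum are at least $\pi/\sqrt{\lambda_nS_*}$. This is stronger than \eqref{eq:injectivity-lower}, so the stated bound follows.

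The main point needing care is the Fenchel step with a corner. I would handle it by applying Fenchel's theorem to the closed polygonal-smooth curve and accounting for the turning angle at the corner, which is at most $\pi$. As an alternative, one could smooth the corner and pass to the limit.
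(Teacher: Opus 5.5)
Your proof is correct and follows the paper's argument. The three steps match: the Gauss equation with \Cref{lem:trace-free} and $1\le\frac{3}{2n}S_*$ give the sectional bound, the Euclidean curvature of geodesics satisfies $1+|h(\gamma',\gamma')|^2\le\lambda_nS_*$, and Fenchel's theorem with a single corner of angle at most $\pi$ gives the sheet separation.

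The one variation is in the injectivity step. The paper uses the geodesic-loop form of Klingenberg's lemma together with the same corner estimate, which yields exactly $\pi/(2\sqrt{\lambda_nS_*})$. You use the standard form with smooth closed geodesics. This is valid: a nearest non-conjugate cut point at distance $\inj(M)$ produces a smooth closed geodesic of length $2\inj(M)$. It gives the stronger bound $\inj(M)\ge\pi/\sqrt{\lambda_nS_*}$, which implies the stated one.
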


\begin{proof}
For orthonormal $X,Y$, the Gauss equation and \Cref{lem:trace-free} give
\[
 K_M(X,Y)
 =1+\langle h(X,X),h(Y,Y)\rangle-|h(X,Y)|^2
 \le1+\frac{n-2}{2n}S_*.
\]
By \eqref{eq:first-curvature-gap}, $1\le3S_*/(2n)$, and hence \eqref{eq:sectional-upper} follows.  Rauch comparison \cite{Rauch1951} gives conjugate radius at least $\pi/\sqrt{\kappa_nS_*}$.

Let $\gamma$ be a unit-speed geodesic in $M$.  As a curve in Euclidean space,
\[
 \left|\frac{D}{\dd s}(f\circ\gamma)'\right|^2
 =|-f+h(\gamma',\gamma')|^2
 =1+|h(\gamma',\gamma')|^2.
\]
By \Cref{lem:trace-free} and \eqref{eq:first-curvature-gap},
\[
 1+|h(\gamma',\gamma')|^2
 \le1+\frac{n-1}{n}S_*
 \le\lambda_nS_*.
\]
Suppose that $\gamma$ is a nontrivial geodesic loop of length $L$.  Its Euclidean image is a closed piecewise smooth curve with one corner.  The corner angle is at most $\pi$, while Fenchel's theorem \cite{Fenchel1951} says that the total curvature, including the corner, is at least $2\pi$.  Hence
\[
 2\pi\le\sqrt{\lambda_nS_*}\,L+\pi,
\]
so $L\ge\pi/\sqrt{\lambda_nS_*}$.  Klingenberg's injectivity-radius lemma \cite{Klingenberg1959} states that the injectivity radius is at least the smaller of the conjugate radius and half the length of the shortest nontrivial geodesic loop.  Since $\kappa_n<\lambda_n$, this proves \eqref{eq:injectivity-lower}.

Finally, let $\gamma$ minimize the intrinsic distance between two distinct points $x,y$ with $f(x)=f(y)$.  Its Euclidean image is again a closed curve with a single corner of angle at most $\pi$.  The same Fenchel estimate, now without the factor $1/2$ from Klingenberg's lemma, proves \eqref{eq:sheet-separation}.
\end{proof}

\subsection{The differentiated Simons identity}

\markbegin
For the estimates in this subsection, set
\[
 \gamma_n=
 \begin{cases}
 \dfrac32,& n=2,\\[2mm]
 \dfrac{n+3+\sqrt{17n^2-26n+9}}{2n},& n\ge3,
 \end{cases}
 \qquad
 L_n=13+\gamma_n.
\]
\markend

At a point where the tangent and normal frames are synchronous, write
$\nabla_kA_\alpha=\nabla_{e_k}A_\alpha$.  For normal indices $\alpha,\beta$, define
\[
 \mathcal A_{\alpha\beta}
 =\sum_{i,j,k,l,p}h_{ijk}^{\beta}h_{ijp}^{\beta}
 h_{pl}^{\alpha}h_{lk}^{\alpha},
\]
\[
 \mathcal B_{\alpha\beta}
 =\sum_{i,j,k,l,p}h_{ijk}^{\beta}h_{lpk}^{\beta}
 h_{pj}^{\alpha}h_{il}^{\alpha}.
\]

The following identity is Proposition~2.1 of Ge--Li--Zhang \cite{GeLiZhang2026}.
\begin{lemma}[Ge--Li--Zhang \cite{GeLiZhang2026}]\label{lem:differentiated-Simons}
Every minimal immersion satisfies
\begin{align}\label{eq:differentiated-Simons}
 \frac12\Delta|\nabla h|^2
 ={}&|\nabla^2h|^2+(2n+3)|\nabla h|^2
 -3\sum_{\alpha,\beta}
 (\mathcal A_{\alpha\beta}-2\mathcal B_{\alpha\beta})\notag\\
 &-6\sum_{k,\alpha,\beta}
 \langle A_\alpha,\nabla_kA_\beta\rangle^2\notag\\
 &-\sum_{\alpha,\beta}\langle A_\alpha,A_\beta\rangle
 \sum_k\langle\nabla_kA_\alpha,\nabla_kA_\beta\rangle\notag\\
 &+3\sum_{\alpha,\beta}
 \left\langle[A_\alpha,A_\beta],
 \sum_k[\nabla_kA_\beta,\nabla_kA_\alpha]\right\rangle.
\end{align}
\end{lemma}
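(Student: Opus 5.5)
The identity is stated as Proposition~2.1 of \cite{GeLiZhang2026}, so strictly one may cite it. To verify it, I would compute $\frac12\Delta|\nabla h|^2$ directly in local frames.

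\textbf{Step 1: Bochner split.} Begin with
\[
\frac12\Delta|\nabla h|^2=|\nabla^2h|^2+\sum h^\alpha_{ijk}\,h^\alpha_{ijkll},
\]
at a point with synchronous frames. Everything then reduces to commuting the indices of $h^\alpha_{ijkll}$ so as to produce $\nabla_k(\Delta h)$ plus curvature terms.

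\textbf{Step 2: Commute with the Ricci identity.} Write
\[
h^\alpha_{ijkll}=h^\alpha_{ijlkl}=h^\alpha_{ijllk}+(\text{Ricci-identity terms for } \nabla_l\nabla_k-\nabla_k\nabla_l \text{ acting on } \nabla h).
\]
For the $3$-tensor $\nabla h$ with values in the normal bundle, these terms carry one factor $R_{\cdot\cdot kl}$ for each tangent slot $i,j,l$ and one factor $R^\perp_{\alpha\beta kl}$ for the normal slot. Next use the classical Simons formula in tensor form,
\[
\Delta h^\alpha_{ij}=n h^\alpha_{ij}-\sum_\beta\langle A_\alpha,A_\beta\rangle h^\beta_{ij}+(\text{commutator terms } [A_\alpha,A_\beta] \text{ acting on } A_\beta),
\]
and differentiate it in $e_k$. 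This produces terms of two kinds: $\nabla A\cdot A\cdot A$ terms contracted against $h^\alpha_{ijk}$, and the linear term $n|\nabla h|^2$.

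\textbf{Step 3: Substitute the structure equations.} Insert the Gauss and Ricci equations from \Cref{sec:prelim} into the Ricci-identity terms. The constant-curvature parts contribute multiples of $|\nabla h|^2$; together with the $n|\nabla h|^2$ from Step 2 these should total $(2n+3)|\nabla h|^2$. The quadratic parts give quartic expressions of the form $\nabla h\cdot\nabla h\cdot h\cdot h$.

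\textbf{Step 4: Regroup the quartic terms.} Using Codazzi symmetry $h_{ijk}=h_{ikj}$ and trace-freeness, sort the quartic terms into the invariants $\mathcal A_{\alpha\beta}$, $\mathcal B_{\alpha\beta}$, $\langle A_\alpha,\nabla_kA_\beta\rangle^2$, $\langle A_\alpha,A_\beta\rangle\langle\nabla_kA_\alpha,\nabla_kA_\beta\rangle$, and the commutator pairing. Terms like $\sum\langle\nabla_kA_\alpha,A_\beta\rangle\langle A_\alpha,\nabla_kA_\beta\rangle$ arising from differentiating $\langle A_\alpha,A_\beta\rangle$ must be symmetrized and combined with the others.

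\textbf{Main obstacle.} The hard part is this bookkeeping: obtaining the exact coefficients $-3$, $+6$ (inside $\mathcal A-2\mathcal B$), $-6$, $-1$, $+3$. This requires a consistent sign convention for $R^\perp$ and correct identification of the index patterns as $\mathcal A$ versus $\mathcal B$. I would first check the hypersurface case $q=1$. There all commutators vanish, and the formula reduces to the known identity
\[
\frac12\Delta|\nabla A|^2=|\nabla^2A|^2+(2n+3-S)|\nabla A|^2-3(\mathcal A-2\mathcal B)-\frac32|\nabla S|^2
\]
(noting $6\sum_k\langle A,\nabla_kA\rangle^2=\frac32|\nabla S|^2$). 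This check calibrates the coefficients, and the normal-commutator terms would then be matched by polarization in $\alpha,\beta$.
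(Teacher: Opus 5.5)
Your route is the paper's route (Bochner split, commuting derivatives, differentiating the matrix Simons equation, inserting Gauss and Ricci, regrouping). However, Step~2 contains a false identity, and following it the computation comes out wrong.

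\textbf{The failing step.} The equality $h^\alpha_{ijkll}=h^\alpha_{ijlkl}$ does not hold. Codazzi only makes $\nabla h$ (and hence $\nabla^2h$) symmetric in the slots $i,j,k$. Here $k$, the first derivative slot, is being exchanged with $l$, the second derivative slot, and that requires the Ricci identity for $h$ itself, differentiated once more. In the paper's conventions, with summation over repeated indices,
\[
\begin{aligned}
 h^\alpha_{ijkll}
 ={}&h^\alpha_{ijllk}
 +\nabla_l\bigl(h^\alpha_{ip}R_{pjkl}+h^\alpha_{pj}R_{pikl}
 -h^\beta_{ij}R^\perp_{\alpha\beta kl}\bigr)\\
 &+\bigl(h^\alpha_{pjl}R_{pikl}+h^\alpha_{ipl}R_{pjkl}
 +h^\alpha_{ijp}R_{plkl}-h^\beta_{ijl}R^\perp_{\alpha\beta kl}\bigr).
\end{aligned}
\]
You kept only the second group; this is why the paper invokes both displayed Ricci identities.

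\textbf{Why the omitted group matters.} Already for $q=1$, after contraction with $h_{ijk}$, it contributes $2|\nabla h|^2-2(\mathcal A-2\mathcal B)-\frac12|\nabla S|^2$. Without it your Step~3 count gives only $n+(n+1)=2n+1$. The hypersurface reduction would then be
\[
 (2n+1-S)|\nabla h|^2-(\mathcal A-2\mathcal B)-|\nabla S|^2,
\]
which fails your own calibration.

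For $q\ge2$ the omitted group also contains $-h^\alpha_{ijk}h^\beta_{ij}\nabla_lR^\perp_{\alpha\beta kl}$. Since
\[
 \textstyle\sum_l\nabla_lR^\perp_{\alpha\beta kl}=\langle\nabla_kA_\alpha,A_\beta\rangle-\langle A_\alpha,\nabla_kA_\beta\rangle,
\]
this term equals
\[
 \textstyle -\sum\langle\nabla_kA_\alpha,A_\beta\rangle^2+\sum\langle\nabla_kA_\alpha,A_\beta\rangle\langle A_\alpha,\nabla_kA_\beta\rangle .
\]
Its second part exactly cancels the mixed term you noticed arising from differentiating $\langle A_\alpha,A_\beta\rangle A_\beta$ in the Simons equation. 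That mixed term is not among the invariants in the statement, so it cannot be disposed of by symmetrization; it is removed by this cancellation. The omitted group also supplies three of the six copies of $-\langle A_\alpha,\nabla_kA_\beta\rangle^2$.

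\textbf{The calibration.} The $q=1$ check can only detect errors; it cannot fix the codimension-dependent coefficients. The commutator pairing in the last line vanishes identically when $q=1$. Different placements of the normal indices also become indistinguishable, for example $\langle A_\alpha,\nabla_kA_\beta\rangle^2$ versus $\langle A_\alpha,\nabla_kA_\beta\rangle\langle A_\beta,\nabla_kA_\alpha\rangle$, or $\tr(\nabla_kA_\alpha\nabla_kA_\beta A_\alpha A_\beta)$ versus $\tr(\nabla_kA_\alpha\nabla_kA_\beta A_\beta A_\alpha)$. So no polarization argument recovers the coefficient $+3$ of the commutator pairing. That coefficient has to come from actually carrying the $R^\perp$, $\nabla R^\perp$, and differentiated double-commutator terms through the computation. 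The paper then regroups the resulting raw quartic terms using the full symmetry of $h^\alpha_{ijk}$ and the identity $2\langle K,PQ\rangle=\langle K,[P,Q]\rangle$ for skew $K$ and symmetric $P,Q$.
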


\begin{proof}
The differentiated Simons formula needed below is the identity recorded in \cite[Proposition~2.1]{GeLiZhang2026}.    For completeness, we fix the conventions and reproduce the raw identity, and then give the regrouping that leads to \eqref{eq:differentiated-Simons}.

Fix a point and choose tangent and normal orthonormal frames whose connection forms vanish at that point.  We use the following conventions for the Ricci identities:
\begin{align*}
 h_{ijkl}^{\alpha}-h_{ijlk}^{\alpha}
 &=h_{ip}^{\alpha}R_{pjkl}+h_{pj}^{\alpha}R_{pikl}
   -h_{ij}^{\beta}R^{\perp}_{\alpha\beta kl},\\
 h_{ijklm}^{\alpha}-h_{ijkml}^{\alpha}
 &=h_{pjk}^{\alpha}R_{pilm}+h_{ipk}^{\alpha}R_{pjlm}
   +h_{ijp}^{\alpha}R_{pklm}
   -h_{ijk}^{\beta}R^{\perp}_{\alpha\beta lm}.
\end{align*}
The Gauss and normal Ricci equations are
\begin{align*}
 R_{ijkl}
 &=\delta_{ik}\delta_{jl}-\delta_{il}\delta_{jk}
   +\sum_{\alpha}
   \bigl(h_{ik}^{\alpha}h_{jl}^{\alpha}
         -h_{il}^{\alpha}h_{jk}^{\alpha}\bigr),\\
 R^{\perp}_{\alpha\beta kl}
 &=\sum_i\bigl(h_{ik}^{\alpha}h_{il}^{\beta}
                -h_{il}^{\alpha}h_{ik}^{\beta}\bigr).
\end{align*}
Codazzi symmetry and minimality give
$h_{ijk}^{\alpha}=h_{ikj}^{\alpha}$ and
$\sum_i h_{iik}^{\alpha}=0$.  The ordinary Simons equation can be written in matrix form as
\[
 \Delta A_{\alpha}
 =nA_{\alpha}
 -\sum_{\beta}\langle A_{\alpha},A_{\beta}\rangle A_{\beta}
 -\sum_{\beta}[A_{\beta},[A_{\beta},A_{\alpha}]].
\]
Moreover,
\[
 \frac12\Delta|\nabla h|^2
 =|\nabla^2h|^2
  +\sum_{i,j,k,\alpha}h_{ijk}^{\alpha}\Delta h_{ijk}^{\alpha}.
\]
Differentiating the matrix Simons equation covariantly in the $e_k$ direction, commuting the derivative through the rough Laplacian with the two displayed Ricci identities, and then substituting the Gauss and normal Ricci equations gives the unrestricted formula of \cite[Proposition~2.1]{GeLiZhang2026}.  Codazzi symmetry and differentiated minimality cancel the trace terms, while the linear ambient-curvature terms combine to $(2n+3)|\nabla h|^2$.  In our index conventions the resulting raw identity is
\begin{align*}
 \frac12\Delta|\nabla h|^2
 ={}&|\nabla^2h|^2+(2n+3)|\nabla h|^2\\
 &+\sum_{i,j,k,p,l,\alpha,\beta}
 \bigl(
 6h_{ijk}^{\alpha}h_{lpk}^{\alpha}h_{pj}^{\beta}h_{il}^{\beta}
 -3h_{ijk}^{\alpha}h_{ijp}^{\alpha}h_{pl}^{\beta}h_{lk}^{\beta}\\
 &\qquad
 -6h_{ijk}^{\alpha}h_{pjl}^{\alpha}h_{pl}^{\beta}h_{ik}^{\beta}
 +6h_{ijk}^{\alpha}h_{lp}^{\alpha}h_{pjk}^{\beta}h_{il}^{\beta}\\
 &\qquad
 -6h_{ijk}^{\alpha}h_{pi}^{\alpha}h_{pl}^{\beta}h_{jlk}^{\beta}
 -h_{ijk}^{\alpha}h_{pl}^{\alpha}h_{pl}^{\beta}h_{ijk}^{\beta}
 \bigr).
\end{align*}
We now regroup every quartic term, without imposing any normal-flatness condition.  The first and second raw terms, after interchanging the dummy normal indices in the first one and matching the definitions of $\mathcal A_{\alpha\beta}$ and $\mathcal B_{\alpha\beta}$, contribute
\[
 -3\sum_{\alpha,\beta}
 (\mathcal A_{\alpha\beta}-2\mathcal B_{\alpha\beta}).
\]
The full symmetry of $h_{ijk}^{\alpha}$ and differentiated minimality give
\[
 \sum_{i,j,k,p,l,\alpha,\beta}
 h_{ijk}^{\alpha}h_{pjl}^{\alpha}h_{pl}^{\beta}h_{ik}^{\beta}
 =\sum_{k,\alpha,\beta}
 \langle A_\alpha,\nabla_kA_\beta\rangle^2.
\]
The last term of the raw formula is exactly
\[
 -\sum_{\alpha,\beta}\langle A_\alpha,A_\beta\rangle
 \sum_k\langle\nabla_kA_\alpha,\nabla_kA_\beta\rangle.
\]
For the remaining two terms, matrix multiplication and cyclicity of the trace give
\[
 6\sum_{\alpha,\beta}
 \left\langle[A_\alpha,A_\beta],
 \sum_k\nabla_kA_\beta\,\nabla_kA_\alpha\right\rangle.
\]
If $K$ is skew-symmetric and $P,Q$ are symmetric, then
\[
 2\langle K,PQ\rangle=\langle K,[P,Q]\rangle.
\]
Applying this identity with
$K=[A_\alpha,A_\beta]$, $P=\nabla_kA_\beta$, and
$Q=\nabla_kA_\alpha$ gives the last line of
\eqref{eq:differentiated-Simons}.
\end{proof}

For $n\ge3$, the next estimate is Lemma~3.4 of Ge--Li--Zhang \cite{GeLiZhang2026}; the case $n=2$ is verified directly below.
\begin{lemma}[Ge--Li--Zhang \cite{GeLiZhang2026}]\label{lem:PT}
For every fixed pair of normal indices $\alpha,\beta$,
\begin{equation}\label{eq:PT}
 3(\mathcal A_{\alpha\beta}-2\mathcal B_{\alpha\beta})
 \le\gamma_n|A_\alpha|^2|\nabla h^\beta|^2,
\end{equation}
where
$|\nabla h^\beta|^2=\sum_{i,j,k}(h_{ijk}^\beta)^2$ and $\gamma_n$ is the constant defined above.
\end{lemma}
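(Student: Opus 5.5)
For $n\ge3$ the inequality \eqref{eq:PT} is exactly Lemma~3.4 of Ge--Li--Zhang \cite{GeLiZhang2026}, so I would quote it. The work is the case $n=2$, for which I would compute both sides in an eigenframe of $A_\alpha$. Throughout, the normal indices $\alpha,\beta$ are fixed, and I write $T_{ijk}=h^\beta_{ijk}$. By Codazzi symmetry and differentiated minimality, $T$ is a fully symmetric $3$-tensor that is trace-free in every pair of indices.

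The first step works in any dimension. Choose the tangent frame at the point so that $A_\alpha=\operatorname{diag}(\lambda_1,\ldots,\lambda_n)$ with $\sum_i\lambda_i=0$. Substituting $h^\alpha_{pl}=\lambda_p\delta_{pl}$ into the definitions collapses the inner sums:
\[
 \mathcal A_{\alpha\beta}=\sum_{i,j,k}T_{ijk}^2\lambda_k^2,
 \qquad
 \mathcal B_{\alpha\beta}=\sum_{i,j,k}T_{ijk}^2\lambda_i\lambda_j .
\]
Since $T_{ijk}^2$ is symmetric in $(i,j,k)$, I would symmetrize the weights. This gives $\mathcal A_{\alpha\beta}-2\mathcal B_{\alpha\beta}=\sum_{i,j,k}c_{ijk}T_{ijk}^2$, where
\[
 c_{ijk}=\tfrac13(\lambda_i^2+\lambda_j^2+\lambda_k^2)
 -\tfrac23(\lambda_i\lambda_j+\lambda_j\lambda_k+\lambda_k\lambda_i).
\]
The lemma then reduces to the quadratic-form inequality $3\sum c_{ijk}T_{ijk}^2\le\gamma_n\bigl(\sum_i\lambda_i^2\bigr)\sum T_{ijk}^2$, taken over symmetric trace-free $T$.

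For $n=2$, write $\lambda_1=\lambda$ and $\lambda_2=-\lambda$, so $|A_\alpha|^2=2\lambda^2$. The trace-free symmetric tensor has only two free components, $T_{111}=a$ and $T_{222}=b$. Then $T_{122}$ and its permutations equal $-a$, and $T_{112}$ and its permutations equal $-b$. The weights are $c_{111}=c_{222}=-\lambda^2$, while the mixed indices (three orderings each) have $c=\tfrac53\lambda^2$. Summing,
\[
 \mathcal A_{\alpha\beta}-2\mathcal B_{\alpha\beta}
 =-\lambda^2(a^2+b^2)+5\lambda^2(a^2+b^2)=4\lambda^2(a^2+b^2).
\]
On the other hand, $|\nabla h^\beta|^2=4(a^2+b^2)$. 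Hence $3(\mathcal A_{\alpha\beta}-2\mathcal B_{\alpha\beta})=\tfrac32|A_\alpha|^2|\nabla h^\beta|^2$, which gives \eqref{eq:PT} with equality and $\gamma_2=\tfrac32$.

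For $n\ge3$, I would only indicate how \cite{GeLiZhang2026} proceeds, since the lemma is cited rather than reproved there. One bounds the weights $c_{ijk}$ separately on the three index types (all equal, two equal, all distinct) under the constraint $\sum_i\lambda_i=0$. One then uses the trace-free relations $T_{iik}=-\sum_{j\ne i}T_{jjk}$ to transfer weight from the dangerous diagonal-type components onto the others. The worst case leads to a two-by-two eigenvalue problem, whose largest root is $\frac{n+3+\sqrt{17n^2-26n+9}}{2n}$. This optimization is the main obstacle, and it is precisely why I rely on the citation rather than redo it. The only new verification needed is the explicit $n=2$ computation above.
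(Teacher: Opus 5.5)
Your proposal is correct and follows the paper's own route: diagonalize $A_\alpha$, reduce \eqref{eq:PT} to the symmetrized weights on $T_{ijk}^2$, check $n=2$ by direct computation, and cite Ge--Li--Zhang for $n\ge3$. Your values $4\lambda^2(a^2+b^2)$ and $4(a^2+b^2)$ give exactly the paper's equality case with $\gamma_2=3/2$.

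One remark: the $n\ge3$ case is less of an obstacle than you suggest. The paper's sketch does not transfer weight through the trace-free relations on $T$. It simply bounds each weight $3c_{ijk}$ pointwise by $\gamma_n|A_\alpha|^2$. The binding case is $3c_{iij}=\lambda_j^2-4\lambda_i\lambda_j$, which is controlled using only $|A_\alpha|^2\ge\lambda_i^2+\lambda_j^2+(\lambda_i+\lambda_j)^2/(n-2)$.
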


\begin{proof}
For $n\ge3$ this is the algebraic estimate in \cite[Lemma~3.4]{GeLiZhang2026}; we include the short calculation because it also makes the exceptional case $n=2$ transparent.  Diagonalize
$A_\alpha=\operatorname{diag}(\lambda_1,\ldots,\lambda_n)$ and put
$T_{ijk}=h_{ijk}^\beta$.  By Codazzi symmetry and differentiated minimality, $T$ is fully symmetric and
$\sum_iT_{iik}=0$.  Direct contraction gives
\begin{align*}
 3(\mathcal A_{\alpha\beta}-2\mathcal B_{\alpha\beta})
 =\sum_{i,j,k}T_{ijk}^2\bigl[&\lambda_i^2+\lambda_j^2+\lambda_k^2
 -2(\lambda_i\lambda_j+\lambda_j\lambda_k+\lambda_k\lambda_i)\bigr].
\end{align*}
If $i,j,k$ are distinct, the coefficient in brackets is at most
$2(\lambda_i^2+\lambda_j^2+\lambda_k^2)\le2|A_\alpha|^2$.  If $i=j=k$, it equals $-3\lambda_i^2$ and is harmless.  If exactly two indices agree, the coefficient has the form
\[
 \lambda_j^2-4\lambda_i\lambda_j.
\]
For $n\ge3$, trace freeness yields
\[
 |A_\alpha|^2\ge
 \lambda_i^2+\lambda_j^2+
 \frac{(\lambda_i+\lambda_j)^2}{n-2}.
\]
The largest generalized eigenvalue in
\[
 \lambda_j^2-4\lambda_i\lambda_j
 \le\gamma\left(
 \lambda_i^2+\lambda_j^2+
 \frac{(\lambda_i+\lambda_j)^2}{n-2}
 \right)
\]
is the positive root of
\[
 n\gamma^2-(n+3)\gamma-4n+8=0,
\]
namely
\[
 \gamma_n=
 \frac{n+3+\sqrt{17n^2-26n+9}}{2n}.
\]
This number is larger than $2$, so it also controls the all-distinct case.  Summing the coefficient bounds against $T_{ijk}^2$ proves \eqref{eq:PT} for $n\ge3$.

It remains to verify $n=2$, which is not covered by the cited lemma.  Write
$A_\alpha=\operatorname{diag}(\lambda,-\lambda)$.  Full symmetry and
$\sum_iT_{iik}=0$ give
\[
 T_{111}=a,\qquad T_{112}=b,\qquad
 T_{122}=-a,\qquad T_{222}=-b.
\]
Substitution into the preceding contraction formula yields
\[
 3(\mathcal A_{\alpha\beta}-2\mathcal B_{\alpha\beta})
 =12\lambda^2(a^2+b^2),
\]
whereas
\[
 |A_\alpha|^2|\nabla h^\beta|^2
 =8\lambda^2(a^2+b^2).
\]
Thus \eqref{eq:PT} holds with the sharp coefficient
$\gamma_2=3/2$.
\end{proof}

\begin{proposition}\label{prop:bochner}
With $Q=|\nabla h|^2$,
\[
 \frac12\Delta Q
 \ge|\nabla^2h|^2+(2n+3-L_nS)Q.
\]
\end{proposition}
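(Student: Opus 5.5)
The plan is to start from the identity of \Cref{lem:differentiated-Simons} and bound each of its four quartic groups from below by a multiple of $SQ$, where $Q=|\nabla h|^2=\sum_{k,\alpha}|\nabla_kA_\alpha|^2$. The term $|\nabla^2h|^2$ is kept as it stands. The target is to show that the four coefficients add up exactly to $L_n=13+\gamma_n$, with the contributions $\gamma_n$, $6$, $1$ and $6$. Everything is pointwise, in frames that are synchronous at the chosen point, so only matrix algebra is involved.

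\emph{First group (Peng--Terng term).} Apply \Cref{lem:PT} for each pair $\alpha,\beta$ and sum:
\[
3\sum_{\alpha,\beta}(\mathcal A_{\alpha\beta}-2\mathcal B_{\alpha\beta})\le\gamma_n\sum_\alpha|A_\alpha|^2\sum_\beta|\nabla h^\beta|^2=\gamma_nSQ.
\]

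\emph{Second group.} Cauchy--Schwarz for the Hilbert--Schmidt product gives $\langle A_\alpha,\nabla_kA_\beta\rangle^2\le|A_\alpha|^2|\nabla_kA_\beta|^2$. Summing over $k,\alpha,\beta$,
\[
6\sum_{k,\alpha,\beta}\langle A_\alpha,\nabla_kA_\beta\rangle^2\le 6SQ.
\]

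\emph{Third group.} Introduce the Gram matrices $C_{\alpha\beta}=\langle A_\alpha,A_\beta\rangle$ and $G_{\alpha\beta}=\sum_k\langle\nabla_kA_\alpha,\nabla_kA_\beta\rangle$. Both are positive semidefinite $q\times q$ matrices, with $\tr C=S$ and $\tr G=Q$. The term to control is $\tr(CG)$, and
\[
\tr(CG)\le\lambda_{\max}(G)\,\tr C\le\tr G\,\tr C=SQ.
\]

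\emph{Fourth group (commutator term).} I expect this to be the main point needing care, because the sign of the term is uncontrolled and it must be bounded in absolute value. First apply Cauchy--Schwarz for each $\alpha,\beta,k$. Then use the Böttcher--Wenzel inequality $|[X,Y]|\le\sqrt2|X||Y|$ for real matrices on both commutators; if one prefers not to cite it, the weaker elementary bound $|[X,Y]|\le2|X||Y|$ suffices only with a worse constant, so the sharp form is needed to reach $13$. This gives
\[
3\Bigl|\sum_{\alpha,\beta}\Bigl\langle[A_\alpha,A_\beta],\sum_k[\nabla_kA_\beta,\nabla_kA_\alpha]\Bigr\rangle\Bigr|\le 6\sum_k\Bigl(\sum_\alpha|A_\alpha||\nabla_kA_\alpha|\Bigr)^2 .
\]
Cauchy--Schwarz in $\alpha$ bounds the inner square by $S\sum_\alpha|\nabla_kA_\alpha|^2$. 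Summing over $k$, the whole group is at most $6SQ$ in absolute value.

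Inserting the four bounds into \eqref{eq:differentiated-Simons} gives
\[
\frac12\Delta Q\ge|\nabla^2h|^2+(2n+3)Q-(\gamma_n+6+1+6)SQ=|\nabla^2h|^2+(2n+3-L_nS)Q,
\]
which is the assertion of \Cref{prop:bochner}.
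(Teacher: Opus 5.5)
Your proposal is correct and follows the paper's proof essentially step for step. You insert the same four bounds into \eqref{eq:differentiated-Simons}: $\gamma_nSQ$ from \Cref{lem:PT}, $6SQ$ from Cauchy--Schwarz, $SQ$ from the contraction of the two positive semidefinite Gram matrices, and $6SQ$ from the commutator inequality $|[P,Q]|\le\sqrt2|P||Q|$ followed by Cauchy--Schwarz in $\alpha$, which together give the coefficient $L_n=13+\gamma_n$.
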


\begin{proof}
Summing \eqref{eq:PT} over $\alpha,\beta$ gives
\[
 3\sum_{\alpha,\beta}
 (\mathcal A_{\alpha\beta}-2\mathcal B_{\alpha\beta})
 \le\gamma_nSQ.
\]
For the second term in \eqref{eq:differentiated-Simons}, Cauchy--Schwarz gives
\[
 \sum_{k,\alpha,\beta}
 \langle A_\alpha,\nabla_kA_\beta\rangle^2
 \le SQ.
\]
The two matrices
\[
 (\langle A_\alpha,A_\beta\rangle)_{\alpha\beta},
 \qquad
 \left(\sum_k\langle\nabla_kA_\alpha,
 \nabla_kA_\beta\rangle\right)_{\alpha\beta}
\]
are positive semidefinite.  Their contraction is therefore nonnegative and at most the product of their traces, namely $SQ$.

Finally, the commutator inequality
$|[P,Q]|\le\sqrt2|P||Q|$ gives, for each $k$,
\begin{align*}
 &3\left|\sum_{\alpha,\beta}
 \langle[A_\alpha,A_\beta],
 [\nabla_kA_\beta,\nabla_kA_\alpha]\rangle\right|\\
 &\qquad\le6
 \left(\sum_\alpha|A_\alpha||\nabla_kA_\alpha|\right)^2
 \le6S\sum_\alpha|\nabla_kA_\alpha|^2.
\end{align*}
After summing over $k$, the four adverse contributions in
\eqref{eq:differentiated-Simons} are bounded by
$(\gamma_n+6+1+6)SQ=L_nSQ$.
\end{proof}

\markbegin
Set
\[
 D_n=\frac{\sqrt{L_n}+\sqrt3}{\sqrt2}.
\]
\markend

\begin{proposition}\label{prop:gradient}
Every closed minimal immersion considered above satisfies
\begin{equation}\label{eq:gradient-bound}
 \sup_M|\nabla h|\le D_nS_*.
\end{equation}
\end{proposition}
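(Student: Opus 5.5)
We apply the maximum principle to an auxiliary function that combines $Q=|\nabla h|^2$ with $S$. The constant $D_n=(\sqrt{L_n}+\sqrt3)/\sqrt2$ suggests the test function
\[
 F=Q+cS^2 ,\qquad\text{or more naturally}\qquad F=Q-\mu S^2+\nu S_*S
\]
for suitable constants $\mu,\nu$. At a maximum point $x_0$ of $F$ we have $\Delta F\le0$ and $\nabla F=0$. We combine \Cref{prop:bochner} for $\frac12\Delta Q$ with the Simons identity \eqref{eq:Simons-LiLi}:
\[
 \frac12\Delta S=Q+nS-R_1\ge Q+nS-\tfrac32S^2,\qquad R_1\le\tfrac32S^2 .
\]
This second bound gives a positive multiple of $Q$ in $\Delta S$, which is the term that can absorb the adverse term $-L_nSQ$ coming from \Cref{prop:bochner}.

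Concretely, the first choice is $F=Q+\nu S_*S$. Then at $x_0$,
\[
 0\ge\tfrac12\Delta F\ge(2n+3-L_nS)Q+\nu S_*\bigl(Q+nS-\tfrac32S^2\bigr).
\]
Using $S\le S_*$ and dropping the nonnegative terms $(2n+3)Q$ and $\nu S_*nS$, this yields
\[
 (\nu-L_n)S_*Q\le\tfrac32\nu S_*^3 .
\]
Hence, if $\nu>L_n$, we get $Q(x_0)\le\frac{3\nu}{2(\nu-L_n)}S_*^2$. Everywhere on $M$ we then have
\[
 Q\le F(x_0)\le Q(x_0)+\nu S_*^2\le\Bigl(\frac{3\nu}{2(\nu-L_n)}+\nu\Bigr)S_*^2 .
\]
The terms $|\nabla^2h|^2$ and $\nabla F=0$ are not even needed here. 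Optimizing over $\nu>L_n$ means setting $\nu=L_n+t$ and minimizing $L_n+t+\frac{3L_n}{2t}+\frac32$. The minimum is at $t=\sqrt{3L_n/2}$, with value
\[
 L_n+\sqrt{6L_n}+\tfrac32=\Bigl(\frac{\sqrt{L_n}+\sqrt3}{\sqrt2}\Bigr)^2\cdot\ldots
\]
Checking this: $D_n^2=\frac{L_n+2\sqrt{3L_n}+3}{2}$, whereas the optimum above is $L_n+\sqrt{6L_n}+\tfrac32=(\sqrt{L_n}+\sqrt{3/2})^2$. That exceeds $D_n^2$ by roughly a factor of $2$, so the crude estimate loses this factor and needs refining.

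The refinement is to use the normalized value $F/ S_*^2$ and exploit the term $nS$ and the sharper placement of $S$. Keep $S=s$ in the inequality at $x_0$, which gives
\[
 (\nu S_*-L_n s)Q\le\nu S_*\bigl(\tfrac32s^2-ns\bigr),
\]
and then bound $F(x_0)=Q+\nu S_*s$ as a function of $s\in[0,S_*]$, not by replacing $s$ with $S_*$ separately in each term. Writing $s=\theta S_*$, we need
\[
 \max_{\theta\in[0,1]}\Bigl[\frac{\tfrac32\nu\theta^2}{\nu-L_n\theta}+\nu\theta\Bigr]\le D_n^2
\]
for some choice of $\nu$. We expect the $\tfrac12$ in $D_n^2$ to come from a tighter splitting, for instance $F=Q+\nu S_*S-\mu S^2$ with the extra quadratic term cancelling part of the $\tfrac32S^2$ contribution. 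The tuning of $\nu,\mu$ to land exactly on $D_n$ is the main obstacle. The first attempt will be to choose $\mu$ so that the $\theta^2$ coefficient matches, and then to verify that the resulting one-variable maximum equals $\frac{(\sqrt{L_n}+\sqrt3)^2}{2}$. If exact matching fails, any explicit constant of the same form still suffices for the later sections, since only the structure $\sup|\nabla h|\le C_nS_*$ is used, with $C_n$ entering $\varepsilon_n$.

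Finally, taking square roots gives $\sup_M|\nabla h|\le D_nS_*$. Smoothness of $F$ holds because $Q$ and $S$ are smooth, so no regularization is needed at the maximum point.
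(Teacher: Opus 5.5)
Your first computation is correct, but it proves only $\sup_M|\nabla h|\le(\sqrt{L_n}+\sqrt{3/2})\,S_*$. This is strictly weaker than the claimed $D_nS_*=(\sqrt{L_n/2}+\sqrt{3/2})\,S_*$, and the refinements you sketch cannot close the gap.

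In the $\theta$-maximization, the function $\theta\mapsto\frac{3\nu\theta^2}{2(\nu-L_n\theta)}+\nu\theta$ is increasing on $[0,1]$ for $\nu>L_n$. Its maximum is therefore at $\theta=1$, and it returns exactly your old constant $\nu+\frac{3\nu}{2(\nu-L_n)}$. The retained $-ns$ (and $(2n+3)Q$) terms only help by amounts of relative size $n/S_*$. Since $S_*$ has no a priori upper bound, the uniform constant is unchanged.

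Subtracting $\mu S^2$ with $\mu>0$ goes the wrong way, because $\frac12\Delta(-\mu S^2)=-\mu(S\Delta S+|\nabla S|^2)$ contributes the adverse terms $-2\mu SQ$ and $-\mu|\nabla S|^2$.

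The fallback that any constant would do does not prove the proposition as stated. It would also invalidate the paper's explicit numerics. With your constant in place of $D_n$, one gets $E_n=(1+2D_n)/6>1.84$. The appendix bound $\varepsilon_n<[48E_n^2n(n+2)^2]^{-1}$ then falls below $[160n(n+2)^2]^{-1}$, contradicting the asserted $\varepsilon_n>[110n(n+2)^2]^{-1}$.

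The missing idea is to weight by $S^2$ instead of $S_*S$, taking $F_c=Q+cS^2$. Because
\[
 \tfrac12\Delta S^2=S\Delta S+|\nabla S|^2\ge2SQ+S^2(2n-3S),
\]
the good $Q$-coefficient is $2cS$. It is proportional to $S$, exactly like the adverse term $L_nSQ$ in \Cref{prop:bochner}. So the threshold is $c>L_n/2$ rather than $\nu>L_n$, and this halving is precisely the factor you are missing.

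At a maximum $x_0$ of $F_c$, with $s=S(x_0)$,
\[
 0\ge\bigl[2n+3+(2c-L_n)s\bigr]Q(x_0)+cs^2(2n-3s).
\]
Hence $Q(x_0)=0$ if $s\le2n/3$, and $Q(x_0)\le\frac{3c}{2c-L_n}s^2$ if $s>2n/3$. Therefore $\sup_MQ\le\sup_MF_c\le\bigl(c+\frac{3c}{2c-L_n}\bigr)S_*^2$. With $y=2c-L_n$, the coefficient $\frac12\bigl(y+L_n+3+3L_n/y\bigr)$ is minimized at $y=\sqrt{3L_n}$, where it equals $\frac12(\sqrt{L_n}+\sqrt3)^2=D_n^2$.

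Keeping your linear term does not help. For $F=Q+\nu S_*S+cS^2$, the same computation at $\theta=1$ gives the coefficient $\frac{3u}{2u-L_n}+u+\frac{\nu}{2}$ with $u=\frac{\nu}{2}+c$. For fixed $u$ this is smallest at $\nu=0$, so the linear term should be dropped entirely.
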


\begin{proof}
From \eqref{eq:Simons-LiLi},
\[
 \frac12\Delta S^2
 =2S|\nabla h|^2+2nS^2-2SR_1+|\nabla S|^2
 \ge2S|\nabla h|^2+S^2(2n-3S).
\]
Fix $c>L_n/2$, let $x_0$ maximize
\[
 F_c=|\nabla h|^2+cS^2,
\]
and put $s=S(x_0)$.  At $x_0$, \Cref{prop:bochner} gives
\[
 0\ge
 \bigl[2n+3+(2c-L_n)s\bigr]|\nabla h|^2(x_0)
 +cs^2(2n-3s).
\]
If $s\le2n/3$, both terms on the right-hand side of the preceding inequality are nonnegative.  Hence $|\nabla h|^2(x_0)=0$; consequently $F_c(x_0)=cs^2\le cS_*^2$.  If $s>2n/3$, then
\[
 |\nabla h|^2(x_0)
 \le\frac{3c}{2c-L_n}s^2,
\]
and hence in both cases
\[
 \sup_M|\nabla h|^2
 \le\sup_MF_c
 \le\left(c+\frac{3c}{2c-L_n}\right)S_*^2.
\]
Write $y=2c-L_n>0$.  The coefficient is
\[
 \frac12\left(y+L_n+3+\frac{3L_n}{y}\right),
\]
which is minimized at $y=\sqrt{3L_n}$.  The minimum is
\[
 \frac{L_n+3+2\sqrt{3L_n}}2
 =\left(\frac{\sqrt{L_n}+\sqrt3}{\sqrt2}\right)^2
 =D_n^2.
\]
This proves \eqref{eq:gradient-bound} and the stated optimality within this auxiliary family.
\end{proof}

\section{The vector-valued sheet defect}\label{sec:vector-defect}

\markbegin
Define only the constants needed in this section by
\[
 E_n=\frac{1+2D_n}{6},
 \qquad
 \sigma_n=\frac1{\sqrt{2n(n+2)}},
 \qquad
 P_n(t)=\sigma_n-E_nt-\frac{t^2}{16n}.
\]
Let $r_n$ be the positive zero of $P_n$, namely
\[
 r_n=\frac{2\sigma_n}
 {E_n+\sqrt{E_n^2+\sigma_n/(4n)}}.
\]
Since $E_n>1$ and $\sigma_n\le1/4$, one has $0<r_n<1/4$.
\markend


\begin{lemma}\label{lem:normal-identities}
For $\varphi_a=\langle f,a\rangle$ and $\xi_a=a^\perp$,
\begin{align}
 \nabla^2\varphi_a(X,Y)
 &=-\varphi_a\langle X,Y\rangle+\langle h(X,Y),\xi_a\rangle,
 \label{eq:Hessian-height}\\
 \nabla_X^\perp\xi_a&=-h(X,\nabla \varphi_a).
 \label{eq:normal-height-derivative}
\end{align}
Let $x\in M$, put $p=f(x)$, and let
$\gamma_v(s)=\exp_x(sv)$ for a unit vector $v\in T_xM$.  If
$T=\gamma_v'$ and $\xi=\xi_p$, then
\begin{equation}\label{eq:normal-second-derivative}
 (\nabla_T^\perp)^2\xi
 =\varphi_p h(T,T)-h(T,A_\xi T)-(\nabla_Th)(T,\nabla \varphi_p).
\end{equation}
At $s=0$,
\[
 \xi=0,
 \qquad
 \nabla_T^\perp\xi=0,
 \qquad
 (\nabla_T^\perp)^2\xi=h_x(v,v).
\]
\end{lemma}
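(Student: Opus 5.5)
The plan is to differentiate the decomposition $a=\varphi_af+\nabla\varphi_a+\xi_a$ with the Euclidean connection $D$ and split the result into tangential and normal components. Since $a$ is constant, for every tangent $X$ we have
\[
0=D_Xa=X(\varphi_a)f+\varphi_aX+D_X\nabla\varphi_a+D_X\xi_a .
\]
We then expand each term using the structure equations of \Cref{sec:prelim}. For the gradient term,
\[
D_X\nabla\varphi_a=\nabla_X\nabla\varphi_a+h(X,\nabla\varphi_a)-\langle X,\nabla\varphi_a\rangle f .
\]
For the normal field, the Weingarten formula in the sphere gives
\[
D_X\xi_a=-A_{\xi_a}X+\nabla^\perp_X\xi_a ,
\]
and there is no $f$-component because $\langle\xi_a,f\rangle=0$. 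The $f$-components cancel, since $X(\varphi_a)=\langle X,\nabla\varphi_a\rangle$.

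\emph{Tangential part.} This gives $\nabla_X\nabla\varphi_a=-\varphi_aX+A_{\xi_a}X$. Pairing with $Y$ yields \eqref{eq:Hessian-height}.

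\emph{Normal part.} This gives $\nabla^\perp_X\xi_a=-h(X,\nabla\varphi_a)$, which is \eqref{eq:normal-height-derivative}.

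For \eqref{eq:normal-second-derivative}, we work along the geodesic $\gamma_v$, where $\nabla_TT=0$. Differentiating $\nabla^\perp_T\xi=-h(T,\nabla\varphi_p)$ once more with the induced connection on $T^*M\otimes T^*M\otimes NM$ gives
\[
(\nabla^\perp_T)^2\xi=-(\nabla_Th)(T,\nabla\varphi_p)-h(\nabla_TT,\nabla\varphi_p)-h(T,\nabla_T\nabla\varphi_p).
\]
The middle term vanishes because $\gamma_v$ is a geodesic. For the last term, the tangential identity above gives $\nabla_T\nabla\varphi_p=-\varphi_pT+A_\xi T$. Substituting produces
\[
(\nabla^\perp_T)^2\xi=\varphi_ph(T,T)-h(T,A_\xi T)-(\nabla_Th)(T,\nabla\varphi_p),
\]
exactly as claimed.

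\emph{Initial values at $s=0$.} Here $p=f(x)$, so $\varphi_p(x)=\langle p,p\rangle=1$. By \eqref{eq:normal-defect-pointwise}, $|\nabla\varphi_p|^2+|\xi_p|^2=0$ at $x$, so both $\nabla\varphi_p(x)=0$ and $\xi(x)=0$. Then \eqref{eq:normal-height-derivative} gives $\nabla^\perp_T\xi=0$. In \eqref{eq:normal-second-derivative}, the terms $A_\xi T$ and $\nabla\varphi_p$ vanish at $x$, leaving $h_x(v,v)$.

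The argument is routine. The only points needing care are the sign conventions of the Weingarten formula, which must match $\langle A_\xi X,Y\rangle=\langle h(X,Y),\xi\rangle$, and the fact that $f$ is normal to the sphere, so that all $f$-components cancel.
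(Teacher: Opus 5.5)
Your proposal is correct and follows the paper's proof essentially step for step. You split $D_Xa=0$ into tangential and normal parts, then differentiate $\nabla^\perp_T\xi=-h(T,\nabla\varphi_p)$ along the geodesic using $\nabla_TT=0$ and the Hessian formula. Your derivation of the initial values from $|\nabla\varphi_p|^2+|\xi_p|^2=1-\varphi_p^2=0$ at $x$ just makes explicit what the paper asserts directly.
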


\begin{proof}
Differentiate the Euclidean identity
$a=\varphi_af+\nabla\varphi_a+\xi_a$ in the direction $X$.  The tangent and normal parts of the resulting equation are precisely
\eqref{eq:Hessian-height} and \eqref{eq:normal-height-derivative}.  Along a geodesic, differentiate
$\nabla_T^\perp\xi=-h(T,\nabla\varphi_p)$.  Since $\nabla_TT=0$,
\[
 (\nabla_T^\perp)^2\xi
 =-(\nabla_Th)(T,\nabla\varphi_p)-h(T,\nabla_T\nabla\varphi_p).
\]
Equation \eqref{eq:Hessian-height} says
$\nabla_T\nabla\varphi_p=-\varphi_pT+A_\xi T$, which gives
\eqref{eq:normal-second-derivative}.  At $s=0$ one has
$\varphi_p=1$, $\nabla\varphi_p=0$, and $\xi=0$, proving the initial conditions.
\end{proof}

Before estimating the normal second jet, we record precisely how normal vectors at different points of the geodesic are compared.  For $0\le s_0,s_1<\inj(M)$, the normal connection $\nabla^\perp$ determines an isometry
\[
 P_{s_0\to s_1}^\perp:N_{\gamma_v(s_0)}M\longrightarrow N_{\gamma_v(s_1)}M.
\]
Indeed, if $\eta\in N_{\gamma_v(s_0)}M$, there is a unique normal vector field $V$ along $\gamma_v$ satisfying
\[
 \nabla_T^\perp V=0,
 \qquad
 V(s_0)=\eta,
\]
and we define $P_{s_0\to s_1}^\perp\eta=V(s_1)$.  Since the normal connection is metric, $P_{s_0\to s_1}^\perp$ preserves inner products and norms, and
\[
 (P_{s_0\to s_1}^\perp)^{-1}=P_{s_1\to s_0}^\perp.
\]
If $\zeta(s)$ is any $C^2$ normal vector field along $\gamma_v$, then differentiation in the fixed vector space $N_xM$ gives
\[
 \frac{\dd}{\dd s}\bigl(P_{s\to0}^\perp\zeta(s)\bigr)
 =P_{s\to0}^\perp\nabla_T^\perp\zeta(s),
 \qquad
 \frac{\dd^2}{\dd s^2}\bigl(P_{s\to0}^\perp\zeta(s)\bigr)
 =P_{s\to0}^\perp(\nabla_T^\perp)^2\zeta(s).
\]
Thus normal parallel transport allows us to compare the normal field $\xi_p(\gamma_v(s))$ with the fixed vector $h_x(v,v)\in N_xM$ without choosing a normal frame.

\begin{lemma}\label{lem:vector-jet}
For every $0\le s<\inj(M)$,
\begin{equation}\label{eq:vector-jet}
 \left|
 P_{s\to0}^\perp\xi(\gamma_v(s))
 -\frac12h_x(v,v)s^2
 \right|
 \le E_nS_*s^3+\frac{\sqrt{S_*}}{24}s^4.
\end{equation}
\end{lemma}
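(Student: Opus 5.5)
Set $Z(s)=P_{s\to0}^\perp\xi(\gamma_v(s))\in N_xM$. This is a $C^\infty$ curve in a fixed Euclidean space. By \Cref{lem:normal-identities} and the parallel-transport differentiation rules, $Z(0)=0$, $Z'(0)=0$ and $Z''(0)=h_x(v,v)$. Taylor's theorem with integral remainder therefore gives
\[
 Z(s)-\tfrac12h_x(v,v)s^2=\int_0^s\frac{(s-u)^2}{2}\,Z'''(u)\dd u
 \quad\text{or}\quad
 =\int_0^s(s-u)\bigl(Z''(u)-Z''(0)\bigr)\dd u .
\]
I would use the second form. The claim then reduces to a pointwise bound
\[
 |Z''(u)-h_x(v,v)|\le 6E_nS_*u+\tfrac{\sqrt{S_*}}{2}u^2 .
\]
Integrating against $(s-u)$ produces $E_nS_*s^3+\frac{\sqrt{S_*}}{24}s^4$ exactly, since $\int_0^s(s-u)u\dd u=s^3/6$ and $\int_0^s(s-u)u^2\dd u=s^4/12$.

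Since $Z''(u)=P^\perp_{u\to0}(\nabla^\perp_T)^2\xi$, I would use \eqref{eq:normal-second-derivative} and split
\[
 P(\nabla_T^\perp)^2\xi-h_x(v,v)=\bigl[P(\varphi_p h(T,T))-h_x(v,v)\bigr]-P\,h(T,A_\xi T)-P(\nabla_Th)(T,\nabla\varphi_p).
\]
The needed growth bounds along the geodesic come from the Euclidean picture and the identity $1-\varphi_p^2-|\nabla\varphi_p|^2=|\xi|^2$. We have $|\nabla\varphi_p|(u)\le|f(\gamma(u))-f(x)|\le u$, because the gradient is the tangential part of $p-f$ up to sign. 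We also have $1-\varphi_p=\tfrac12|f(\gamma(u))-p|^2\le u^2/2$. Finally, $|\xi|\le$ the chord length, but better, $|\xi|(u)\le\int_0^u|\nabla^\perp_T\xi|\le\int_0^u|h(T,\nabla\varphi_p)|$. By \Cref{lem:trace-free}, $|h(T,Y)|\le\sqrt{S_*}|Y|$, so $|\xi|\le\sqrt{S_*}u^2/2$. These give the following estimates:
\begin{align*}
 |(\nabla_Th)(T,\nabla\varphi_p)| &\le D_nS_*u,\\
 |h(T,A_\xi T)| &\le S_*|\xi|\le S_*^{3/2}u^2/2\quad\text{(to be fixed below)},\\
 |\varphi_p h(T,T)-h(T,T)| &\le \sqrt{S_*}\,u^2/2,\\
 |P h(T,T)-h_x(v,v)| &\le \int_0^u|(\nabla_Th)(T,T)|\le D_nS_*u .
\end{align*}
The first uses \Cref{prop:gradient}, and the last holds because $h(T,T)$ is differentiated with $\nabla_TT=0$.

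Adding the linear terms gives $2D_nS_*u$, which fits inside $6E_nS_*u=(1+2D_n)S_*u$, leaving a spare $S_*u$. The main obstacle is the term $h(T,A_\xi T)$. Its naive bound is quadratic in $u$ with coefficient $S_*^{3/2}$, which does not match the stated form $\sqrt{S_*}u^2/2$ plus $S_*u$. I would absorb it using the spare $S_*u$ together with a cruder bound on $|\xi|$. Because $|\xi|\le\sqrt{1-\varphi_p^2}\le u$, we get $|h(T,A_\xi T)|\le\sqrt{S_*}\cdot\sqrt{S_*}|\xi|$. One then needs a bound of the form $S_*|\xi|\le S_*u$, i.e.\ $|\xi|\le u$, which follows from $|\xi|^2\le1-\varphi_p^2\le 2(1-\varphi_p)\le u^2$.

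So the term costs exactly $S_*u$, consistent with the ``$1$'' in $E_n=(1+2D_n)/6$. The bound $|h(X,Y)|\le\sqrt{S_*}$ for unit $X,Y$ and $|A_\xi T|\le\sqrt{S_*}|\xi|$ both follow from Cauchy--Schwarz, or from \Cref{lem:trace-free}. The remaining $\tfrac12\sqrt{S_*}u^2$ comes only from the $(1-\varphi_p)h(T,T)$ term. I would double-check the bookkeeping that $1-\varphi_p\le u^2/2$ uses only that the Euclidean chord is at most the arc length $u$. The restriction $s<\inj(M)$ serves only to make $\gamma_v$ and the parallel transport well defined, so nothing else constrains the estimate.
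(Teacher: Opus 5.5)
Your proposal is correct and is essentially the paper's proof: the same integral-remainder Taylor formula for $P^\perp_{s\to0}\xi$, the same three-term split from \eqref{eq:normal-second-derivative}, and the same pointwise bounds $|\nabla\varphi_p|\le u$, $|\xi|\le u$, $1-\varphi_p\le u^2/2$, $|h(T,A_\xi T)|\le S_*|\xi|$, giving $|Z''(u)-h_x(v,v)|\le(2D_n+1)S_*u+\tfrac12\sqrt{S_*}\,u^2$. The differences are cosmetic: you use the Euclidean chord $|f(\gamma_v(u))-p|\le u$ where the paper uses the spherical distance, and your sharper estimate $|\xi|\le\sqrt{S_*}\,u^2/2$ is valid but unnecessary, so the ``to be fixed'' line can simply be replaced by $|\xi|\le u$.
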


\begin{proof}
The spherical distance $d_{\Sph}(p,f(\gamma_v(s)))$ is at most $s$.  Since $\varphi_p=\cos d_{\Sph}(p,f(\gamma_v(s)))$,
\[
 1-\varphi_p\le1-\cos s\le\frac{s^2}{2}.
\]
Moreover, by \eqref{eq:normal-defect-pointwise},
\[
 |\nabla\varphi_p|^2+|\xi|^2
 =1-\varphi_p^2
 =\sin^2 d_{\Sph}(p,f(\gamma_v(s)))
 \le s^2.
\]
Consequently $|\nabla\varphi_p|\le s$ and $|\xi|\le s$.
Because $T=\gamma_v'$ is tangent-parallel along the geodesic and normal parallel transport is an isometry, the global derivative estimate \eqref{eq:gradient-bound} gives
\[
 \left|P_{s\to0}^\perp h_{\gamma_v(s)}(T,T)-h_x(v,v)\right|
 \le\int_0^s|\nabla_T h|\dd\tau
 \le D_nS_*s.
\]
Moreover,
\[
 |h(T,A_\xi T)|\le S_*|\xi|\le S_*s,
 \qquad
 |(\nabla_Th)(T,\nabla\varphi_p)|\le D_nS_*s.
\]
Let
\[
 Y(s)=P_{s\to0}^\perp\xi(\gamma_v(s))\in N_xM.
\]
By the preceding parallel-transport identities and \Cref{lem:normal-identities},
\[
 Y(0)=0,
 \qquad
 Y'(0)=0,
 \qquad
 Y''(0)=h_x(v,v).
\]
Using \eqref{eq:normal-second-derivative}, the preceding three estimates, and $|h(T,T)|\le\sqrt{S_*}$, we obtain
\[
 |Y''(s)-h_x(v,v)|
 \le(2D_n+1)S_*s+\frac{\sqrt{S_*}}2s^2.
\]
Taylor's formula with integral remainder in the fixed Euclidean vector space $N_xM$ gives
\[
 Y(s)-\frac12h_x(v,v)s^2
 =\int_0^s(s-\tau)\bigl(Y''(\tau)-Y''(0)\bigr)\dd\tau.
\]
Therefore
\[
 \left|Y(s)-\frac12h_x(v,v)s^2\right|
 \le\frac{2D_n+1}{6}S_*s^3
 +\frac{\sqrt{S_*}}{24}s^4.
\]
Since $(2D_n+1)/6=E_n$, this is \eqref{eq:vector-jet}.
\end{proof}


\begin{lemma}\label{lem:fourth-moment}
At every $x\in M$,
\[
 \int_{\Sph^{n-1}}|h_x(v,v)|^2\dd\sigma(v)
 =\frac{2\omega_{n-1}}{n(n+2)}S(x).
\]
\end{lemma}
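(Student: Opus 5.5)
The plan is to diagonalize each shape operator separately and reduce the integral to standard moments of the uniform measure on $\Sph^{n-1}$. In the orthonormal normal frame $\{\nu_\alpha\}$ at $x$,
\[
 |h_x(v,v)|^2=\sum_\alpha\langle A_\alpha v,v\rangle^2 .
\]
So it suffices to prove, for each trace-free symmetric matrix $A$,
\[
 \int_{\Sph^{n-1}}\langle Av,v\rangle^2\dd\sigma(v)
 =\frac{2\omega_{n-1}}{n(n+2)}|A|^2,
\]
and then sum over $\alpha$, using $S=\sum_\alpha|A_\alpha|^2$.

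For a single $A$, rotate coordinates so that $A=\operatorname{diag}(\lambda_1,\ldots,\lambda_n)$ with $\sum_i\lambda_i=0$. Then
\[
 \langle Av,v\rangle^2=\sum_{i,j}\lambda_i\lambda_jv_i^2v_j^2 .
\]
This needs two spherical moments:
\[
 \int_{\Sph^{n-1}}v_i^4\dd\sigma=\frac{3\omega_{n-1}}{n(n+2)},
 \qquad
 \int_{\Sph^{n-1}}v_i^2v_j^2\dd\sigma=\frac{\omega_{n-1}}{n(n+2)}\quad(i\ne j).
\]
I will derive them from the Gaussian integral in $\R^n$, using $\mathbb E[g_i^4]=3$, $\mathbb E[g_i^2g_j^2]=1$, and $\mathbb E|g|^4=n(n+2)$. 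This is the step where the constants must be checked most carefully.

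Combining the moments gives
\[
 \frac{\omega_{n-1}}{n(n+2)}\Bigl(3\sum_i\lambda_i^2+\sum_{i\ne j}\lambda_i\lambda_j\Bigr)
 =\frac{\omega_{n-1}}{n(n+2)}\Bigl(2\sum_i\lambda_i^2+\bigl(\textstyle\sum_i\lambda_i\bigr)^2\Bigr).
\]
Trace-freeness, which holds here by minimality, removes the last term and leaves $2\omega_{n-1}|A|^2/(n(n+2))$. There is no real obstacle; the only points needing care are the normalization of the moments and the use of $\operatorname{tr}A_\alpha=0$.
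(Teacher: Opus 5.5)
Your proof is correct and is essentially the paper's argument. The paper contracts the full spherical fourth-moment identity $\int_{\Sph^{n-1}}v_iv_jv_kv_l\dd\sigma=\frac{\omega_{n-1}}{n(n+2)}(\delta_{ij}\delta_{kl}+\delta_{ik}\delta_{jl}+\delta_{il}\delta_{jk})$ against $h^\alpha_{ij}h^\alpha_{kl}$. You instead diagonalize each $A_\alpha$ separately, which is legitimate by rotation invariance of $\sigma$, and use only the diagonal cases of that identity; your Gaussian normalization of these moments is right. The term $(\tr A_\alpha)^2$ that you kill by trace-freeness is exactly the mean-curvature contraction that vanishes in the paper.
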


\begin{proof}
The standard spherical fourth-moment identity is
\[
 \int_{\Sph^{n-1}}v_iv_jv_kv_l\dd\sigma
 =\frac{\omega_{n-1}}{n(n+2)}
 (\delta_{ij}\delta_{kl}+\delta_{ik}\delta_{jl}+\delta_{il}\delta_{jk}).
\]
Contract with $h_{ij}^\alpha h_{kl}^\alpha$ and sum over $\alpha$.  The first contraction is the squared mean-curvature vector and vanishes; each of the remaining contractions equals $S(x)$.
\end{proof}

\begin{lemma}\label{lem:angular}
Fix $x\in M$ with $S_x=S(x)>0$, put $p=f(x)$, and set
\[
 t=\frac{S_*s}{\sqrt{S_x}}.
\]
For $0\le t\le r_n$,
\begin{equation}\label{eq:angular-estimate}
 \left(
 \frac1{\omega_{n-1}}
 \int_{\Sph^{n-1}}|\xi_p(\gamma_v(s))|^2\dd\sigma(v)
 \right)^{1/2}
 \ge\sqrt{S_x}\,s^2P_n(t).
\end{equation}
\end{lemma}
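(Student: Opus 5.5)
The plan is to read \eqref{eq:angular-estimate} as an $L^2$ lower bound on $\Sph^{n-1}$ for the function $v\mapsto|\xi_p(\gamma_v(s))|$, and to obtain it from the triangle inequality: the main term comes from the quadratic part $\tfrac12h_x(v,v)s^2$ of the vector jet, and the errors are those of \Cref{lem:vector-jet}. Throughout, $\|\cdot\|$ denotes the norm of $L^2(\Sph^{n-1},\omega_{n-1}^{-1}\dd\sigma)$. Since $P^\perp_{s\to0}$ is an isometry, we have $|\xi_p(\gamma_v(s))|=|Y_v(s)|$ with $Y_v(s)=P^\perp_{s\to0}\xi_p(\gamma_v(s))\in N_xM$. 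Minkowski's inequality for the $N_xM$-valued functions $v\mapsto Y_v(s)$ and $v\mapsto\tfrac12h_x(v,v)s^2$ then gives
\[
 \bigl\|\,|\xi_p(\gamma_v(s))|\,\bigr\|
 \ge\frac{s^2}{2}\bigl\|\,|h_x(v,v)|\,\bigr\|
 -\sup_{v}\Bigl|Y_v(s)-\tfrac12h_x(v,v)s^2\Bigr|.
\]

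\textbf{Step 1: checking the hypothesis of \Cref{lem:vector-jet}.} That lemma requires $s<\inj(M)$. From $t\le r_n<1/4$ and $S_x\le S_*$ we get
\[
 s=\frac{t\sqrt{S_x}}{S_*}\le\frac{r_n}{\sqrt{S_*}}<\frac1{4\sqrt{S_*}}.
\]
Since $\lambda_n\le5/4$, the right-hand side is below $\pi/(2\sqrt{\lambda_nS_*})$, which is at most $\inj(M)$ by \eqref{eq:injectivity-lower}. Hence \eqref{eq:vector-jet} holds uniformly in the unit vector $v$.

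\textbf{Step 2: the main term.} By \Cref{lem:fourth-moment}, $\||h_x(v,v)|\|=\sqrt{2S_x/(n(n+2))}$. Therefore
\[
 \frac{s^2}{2}\bigl\|\,|h_x(v,v)|\,\bigr\|=\frac{s^2\sqrt{S_x}}{\sqrt{2n(n+2)}}=\sigma_n\sqrt{S_x}\,s^2 .
\]

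\textbf{Step 3: the error terms.} These are bounded by $E_nS_*s^3+\sqrt{S_*}\,s^4/24$.
\begin{itemize}
\item For the first term, $S_*s=t\sqrt{S_x}$ gives $E_nS_*s^3=\sqrt{S_x}\,s^2\,E_nt$ exactly.
\item For the second term we want $\sqrt{S_*}\,s^4/24\le\sqrt{S_x}\,s^2\,t^2/(16n)=S_*^2s^4/(16n\sqrt{S_x})$. This is equivalent to $16n\sqrt{S_xS_*}\le24S_*^2$. Using $S_x\le S_*$, it suffices that $S_*\ge 2n/3$, which is precisely the first curvature gap \eqref{eq:first-curvature-gap}.
\end{itemize}
Combining Steps 2 and 3 gives
\[
 \bigl\|\,|\xi_p(\gamma_v(s))|\,\bigr\|\ge\sqrt{S_x}\,s^2\Bigl(\sigma_n-E_nt-\frac{t^2}{16n}\Bigr)=\sqrt{S_x}\,s^2P_n(t),
\]
which is \eqref{eq:angular-estimate}. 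The restriction $t\le r_n$ is only needed so that the right-hand side is nonnegative; the inequality itself holds regardless.

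The only genuinely delicate point is the absorption of the quartic error into the $t^2/(16n)$ term. The constant $1/(16n)$ in $P_n$ appears to be tuned exactly to the Li--Li/Chen--Xu threshold $2n/3$, so I would double-check that comparison carefully. I would also confirm that \Cref{lem:vector-jet} is being used in its uniform-in-$v$ form.
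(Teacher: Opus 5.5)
Your proposal is correct and is essentially the paper's own proof. Like the paper, you apply the reverse triangle inequality in normalized $L^2(\Sph^{n-1})$ to \eqref{eq:vector-jet}, use the fourth-moment identity to get the main term $\sigma_n\sqrt{S_x}\,s^2$, and absorb the quartic error into $t^2/(16n)$ via the first curvature gap $S_*\ge 2n/3$, using $24\cdot\frac{2n}{3}=16n$. Your explicit check that $s<\inj(M)$ is a welcome addition, since the paper only records it in the proof of the next lemma.
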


\begin{proof}
By \Cref{lem:fourth-moment}, the normalized
$L^2(\Sph^{n-1})$ norm of
$\frac12h_x(v,v)s^2$ is
$\sqrt{S_x}\,s^2\sigma_n$.  Apply the reverse triangle inequality to
\eqref{eq:vector-jet}.  The cubic error divided by
$\sqrt{S_x}\,s^2$ is $E_nt$.  For the quartic error,
\[
 \frac{\sqrt{S_*}s^2}{24\sqrt{S_x}}
 =\frac{t^2\sqrt{S_x}}{24S_*^{3/2}}
 \le\frac{t^2}{24S_*}
 \le\frac{t^2}{16n},
\]
where the last inequality uses \eqref{eq:first-curvature-gap}.  This gives \eqref{eq:angular-estimate}.
\end{proof}

\begin{lemma}\label{lem:radial}
Under the notation of \Cref{lem:angular}, assume $0\le t\le r_n$.  If $J_x(s,v)$ is the polar-coordinate Jacobian centered at $x$, then
\[
 \varphi_p(\exp_x(sv))
 \ge\cos\!\left(\sqrt{\frac{3}{2n}}\,t\right),
\]
\[
 1-\varphi_p(\exp_x(sv))^2\le s^2,
\]
and
\begin{equation}\label{eq:Jacobian-lower}
 J_x(s,v)\ge s^{n-1}
 \left[
 \frac{\sin\!\left(\sqrt{\frac{n+1}{2n}}\,t\right)}
 {\sqrt{\frac{n+1}{2n}}\,t}
 \right]^{n-1}.
\end{equation}
\end{lemma}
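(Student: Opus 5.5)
The plan is to convert the hypothesis $0\le t\le r_n$ into a bound on the intrinsic radius $s$, and then use three earlier inputs: the spherical distance comparison from the proof of \Cref{lem:vector-jet}, the curvature bound \eqref{eq:sectional-upper}, and the injectivity-radius bound \eqref{eq:injectivity-lower}.

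\emph{Step 1: bound $s$.} Since $S_x\le S_*$ and $S_*\ge 2n/3$ by \eqref{eq:first-curvature-gap},
\[
 s=\frac{t\sqrt{S_x}}{S_*}\le\frac{t}{\sqrt{S_*}}\le\sqrt{\frac{3}{2n}}\,t .
\]
Because $r_n<1/4$, this gives $s\le 1/(4\sqrt{S_*})$.

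\emph{Step 2: polar coordinates are valid.} We have $\lambda_n\le 5/4$ for $n\ge2$, so $\pi/(2\sqrt{\lambda_n})>1>1/4$. Hence $s<\pi/(2\sqrt{\lambda_nS_*})\le\inj(M)$, and polar coordinates centered at $x$ are valid out to radius $s$. The same bound also places $s$ below the conjugate radius $\pi/\sqrt{\kappa_nS_*}$.

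\emph{Step 3: the two height inequalities.} The curve $f\circ\gamma_v$ on $[0,s]$ has length $s$ and joins $p$ to $f(\exp_x(sv))$. Therefore $d_{\Sph}\bigl(p,f(\exp_x(sv))\bigr)\le s$, exactly as in the proof of \Cref{lem:vector-jet}. Since $\varphi_p=\cos d_{\Sph}$ and $s\le\sqrt{3/(2n)}\,t<\pi$, monotonicity of cosine gives
\[
 \varphi_p(\exp_x(sv))\ge\cos s\ge\cos\!\left(\sqrt{\tfrac{3}{2n}}\,t\right).
\]
The second inequality is $1-\varphi_p^2=\sin^2 d_{\Sph}\le d_{\Sph}^2\le s^2$, which was already observed via \eqref{eq:normal-defect-pointwise}.

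\emph{Step 4: the Jacobian bound.} Put $K=\kappa_nS_*$. By \eqref{eq:sectional-upper}, $\operatorname{sec}_M\le K$. The Günther--Bishop (Rauch) comparison then applies inside the injectivity radius and below the conjugate radius $\pi/\sqrt K$, giving
\[
 J_x(s,v)\ge\left(\frac{\sin(\sqrt K s)}{\sqrt K}\right)^{n-1}
 =s^{n-1}\left(\frac{\sin(\sqrt K s)}{\sqrt K s}\right)^{n-1}.
\]
Step 1 gives $\sqrt K\,s\le\sqrt{\kappa_n}\,t$, and $\sqrt{\kappa_n}\,t<\pi$. Since $u\mapsto\sin u/u$ is decreasing and positive on $(0,\pi)$, \eqref{eq:Jacobian-lower} follows because $\kappa_n=(n+1)/(2n)$.

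The only real point to check is the Jacobian comparison: it must be applied in the correct direction (an upper curvature bound gives a lower Jacobian bound) and strictly within both the injectivity and conjugate radii. Step 2 guarantees both.
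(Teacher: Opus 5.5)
Your proof is correct and follows essentially the same route as the paper. You bound $s\le t/\sqrt{S_*}\le\sqrt{3/(2n)}\,t$, use $d_{\Sph}(p,f(\exp_x(sv)))\le s$ for the two height inequalities, check that $s$ lies inside the injectivity radius \eqref{eq:injectivity-lower} and below $\pi/\sqrt{\kappa_nS_*}$, and then apply G\"unther comparison with $\operatorname{sec}_M\le\kappa_nS_*$ together with the monotonicity of $\sin u/u$. Your explicit check $\lambda_n\le 5/4$ simply supplies the numerical inequality $1/(4\sqrt{S_*})<\pi/(2\sqrt{\lambda_nS_*})$, which the paper asserts without comment.
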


\begin{proof}
The ambient spherical distance from $p$ to $f(\exp_x(sv))$ is at most $s$.  Therefore
$\varphi_p\ge\cos s$ and
$1-\varphi_p^2\le s^2$.  Also,
\[
 s=\frac{t\sqrt{S_x}}{S_*}
 \le\frac{t}{\sqrt{S_*}}
 \le\sqrt{\frac{3}{2n}}\,t,
\]
which proves the first two inequalities because $r_n<1/4$.

The comparison is used only inside the normal-coordinate ball considered above.  Indeed, since $t\le r_n<1/4$,
\[
 s=\frac{t\sqrt{S_x}}{S_*}\le\frac{r_n}{\sqrt{S_*}}
 <\frac{1}{4\sqrt{S_*}}
 <\frac{\pi}{2\sqrt{\lambda_nS_*}}
 \le\inj(M),
\]
and, moreover,
\[
 0\le\sqrt{\kappa_nS_*}\,s
 \le\sqrt{\kappa_n}\,r_n<\frac14<\pi.
\]
Thus the model Jacobi factor is positive throughout the interval.  By \eqref{eq:sectional-upper}, G\"unther comparison \cite{Gunther1960} gives
\[
 J_x(s,v)\ge s^{n-1}
 \left(
 \frac{\sin(\sqrt{\kappa_nS_*}\,s)}
 {\sqrt{\kappa_nS_*}\,s}
 \right)^{n-1}.
\]
Furthermore,
\[
 \sqrt{\kappa_nS_*}\,s
 =t\sqrt{\kappa_n\frac{S_x}{S_*}}
 \le\sqrt{\frac{n+1}{2n}}\,t.
\]
The function $\sin z/z$ is decreasing on the interval in question, because $t\le r_n<1/4$.  This proves \eqref{eq:Jacobian-lower}.
\end{proof}


\markbegin
The explicit coefficient appearing in the main theorem is
\begin{equation}\label{eq:epsilon-definition}
 \varepsilon_n=n\int_0^{r_n}tP_n(t)^2
 \cos\!\left(\sqrt{\frac{3}{2n}}\,t\right)
 \left[
 \frac{\sin\!\left(\sqrt{\frac{n+1}{2n}}\,t\right)}
 {\sqrt{\frac{n+1}{2n}}\,t}
 \right]^{n-1}\dd t.
\end{equation}
The sine quotient is interpreted as $1$ at the origin.
\markend

\begin{theorem}\label{thm:defect-weighted}
If $p\in f(M)$ and
$f^{-1}(p)=\{x_1,\ldots,x_m\}$, then
\begin{equation}\label{eq:defect-weighted}
 \mathcal R_p\ge\beta_n\varepsilon_n
 \sum_{j=1}^m
 \left(\frac{S(x_j)}{S_*}\right)^2.
\end{equation}
\end{theorem}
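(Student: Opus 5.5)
The plan is to bound $\mathcal R_p$ from below by restricting its integral to $m$ disjoint small geodesic balls $B_j=B(x_j,s_j)$, one around each preimage $x_j$. On each ball we pass to polar coordinates and insert the angular, radial and Jacobian estimates of \Cref{lem:angular,lem:radial}; the factor $S(x_j)^2/S_*^2$ will then come out of a change of variables. Indices $j$ with $S(x_j)=0$ contribute nothing to the right side of \eqref{eq:defect-weighted}, so we may discard them and assume $S_j:=S(x_j)>0$.

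For the remaining $j$, set $s_j=r_n\sqrt{S_j}/S_*$. Since $S_j\le S_*$ and $r_n<1/4$, the estimate in the proof of \Cref{lem:radial} gives $s_j<\pi/(2\sqrt{\lambda_nS_*})$. This radius is at most $\inj(M)$ by \eqref{eq:injectivity-lower}, and at most half the sheet separation by \eqref{eq:sheet-separation}. Hence the balls $B_j$ are embedded normal-coordinate balls, they are pairwise disjoint, and $\varphi_p>0$ on each of them by the first inequality of \Cref{lem:radial}. Because the integrand of $\mathcal R_p$ is nonnegative, we get
\[
 \mathcal R_p\ge\sum_j\int_{B_j}\frac{\varphi_p|\xi_p|^2}{(1-\varphi_p^2)^{n/2+1}}\dd V .
\]

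On a fixed $B_j$, write $dV=J_{x_j}(s,v)\,ds\,d\sigma(v)$. By \Cref{lem:radial}, $(1-\varphi_p^2)^{-n/2-1}\ge s^{-n-2}$, $\varphi_p\ge\cos(\sqrt{3/(2n)}\,t)$, and the Jacobian bound \eqref{eq:Jacobian-lower} holds. All three lower bounds depend only on $t=S_*s/\sqrt{S_j}$, not on $v$, so we can integrate $|\xi_p|^2$ over $v$ first and apply \eqref{eq:angular-estimate}, which gives at least $\omega_{n-1}S_js^4P_n(t)^2$. Collecting the powers of $s$,
\[
 s^{-n-2}\cdot s^4\cdot s^{n-1}=s ,
\]
so the radial integrand is
\[
 \omega_{n-1}S_j\,s\,P_n(t)^2\cos(\cdot)[\sin(\cdot)/(\cdot)]^{n-1}\,ds .
\]
Substituting $s=\sqrt{S_j}\,t/S_*$ gives $s\,ds=(S_j/S_*^2)\,t\,dt$, and the $t$-range is $[0,r_n]$. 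The contribution of $B_j$ is therefore at least $\omega_{n-1}(S_j^2/S_*^2)\,\varepsilon_n/n$, by the definition \eqref{eq:epsilon-definition}. Since $\omega_{n-1}/n=\beta_n$, summing over $j$ gives \eqref{eq:defect-weighted}.

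The main obstacle is the domain bookkeeping rather than the integral. One must check that the balls are embedded and pairwise disjoint. One must also check that $P_n\ge0$ on $[0,r_n]$, so that squaring \eqref{eq:angular-estimate} is legitimate; this holds because $r_n$ is the positive zero of $P_n$ and $P_n(0)=\sigma_n>0$. Finally, the pointwise bounds must be used only inside $\{\varphi_p>0\}$. All of these follow from $r_n<1/4$ together with \Cref{lem:scale-separation}, so I expect the argument to close without new estimates.
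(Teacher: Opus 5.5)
Your proposal is correct and follows the paper's proof essentially step for step. It uses the same radii $\varrho_j=r_n\sqrt{S_j}/S_*$, the same appeal to \Cref{lem:scale-separation} for embedded, pairwise disjoint balls, and the same combination of \Cref{lem:angular,lem:radial} in polar coordinates, followed by the substitution $t=S_*s/\sqrt{S_j}$ and the identity $\omega_{n-1}/n=\beta_n$; your explicit checks that $P_n\ge0$ on $[0,r_n]$ and that the balls lie in $\{\varphi_p>0\}$ fill in points the paper leaves implicit.
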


\begin{proof}
Put $S_j=S(x_j)$.  A sheet with $S_j=0$ contributes zero to the right side and may be omitted.  If $S_j>0$, set
\[
 \varrho_j=r_n\frac{\sqrt{S_j}}{S_*}.
\]
Since $r_n<1/4$,
\[
 \varrho_j\le\frac{1}{4\sqrt{S_*}}.
\]
The injectivity and separation estimates in \Cref{lem:scale-separation} imply that the geodesic balls $B_{\varrho_j}(x_j)$ lie in their normal-coordinate neighborhoods and are pairwise disjoint.  Indeed,
\[
 \frac{1}{4\sqrt{S_*}}
 <\frac{\pi}{2\sqrt{\lambda_nS_*}}
\]
and twice the left side is smaller than the separation bound
\eqref{eq:sheet-separation}.

Fix one sheet $x=x_j$.  On $B_{\varrho_j}(x)$, use geodesic polar coordinates and combine \Cref{lem:angular,lem:radial}.  Since
$(1-\varphi_p^2)^{-n/2-1}\ge s^{-n-2}$,
\begin{align*}
 &\int_{B_{\varrho_j}(x)}
 \frac{\varphi_p|p^\perp|^2}
 {(1-\varphi_p^2)^{n/2+1}}\dd V\\
 &\quad\ge
 \omega_{n-1}S_j\int_0^{\varrho_j}s
 P_n\!\left(\frac{S_*s}{\sqrt{S_j}}\right)^2
 \cos\!\left(
 \sqrt{\frac{3}{2n}}\frac{S_*s}{\sqrt{S_j}}
 \right)
 \left[
 \frac{\sin\!\left(
 \sqrt{\frac{n+1}{2n}}\frac{S_*s}{\sqrt{S_j}}
 \right)}
 {\sqrt{\frac{n+1}{2n}}\frac{S_*s}{\sqrt{S_j}}}
 \right]^{n-1}\dd s.
\end{align*}
Change variables $t=S_*s/\sqrt{S_j}$.  By the definition
\eqref{eq:epsilon-definition}, the last expression equals
\[
 \frac{\omega_{n-1}}n\varepsilon_n
 \left(\frac{S_j}{S_*}\right)^2
 =\beta_n\varepsilon_n
 \left(\frac{S_j}{S_*}\right)^2.
\]
Summing over the disjoint balls proves \eqref{eq:defect-weighted}.
\end{proof}

\begin{proof}[\textbf{Proof of \Cref{thm:main}}]
Combine \Cref{thm:two-remainders,thm:defect-weighted} and discard the nonnegative global normal-energy term.
\end{proof}

\section{Linearly full volume bounds}\label{sec:full}

In this section the immersion is linearly full.  Put
\[
 N=n+q+1.
\]

\subsection{Integrated normal energy and quantitative fullness}

Define two symmetric endomorphisms of $\R^N$ by
\[
 G=\int_M f\otimes f\dd V,
 \qquad
 C=\int_M P_{N_xM}\dd V_x,
\]
where $P_{N_xM}$ is the orthogonal projection onto the normal space of $M$ inside the sphere.

\begin{lemma}\label{lem:integrated-normal}
The endomorphisms $G$ and $C$ satisfy
\begin{equation}\label{eq:integrated-normal}
 C=\Vol(M)I-(n+1)G,
 \qquad
 \tr C=q\Vol(M).
\end{equation}
For every $a\in\R^N$,
\[
 \int_M|a^\perp|^2\dd V=\langle Ca,a\rangle.
\]
\end{lemma}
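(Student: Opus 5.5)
The plan is to prove everything pointwise first and then integrate over $M$. At each $x\in M$ the ambient space splits orthogonally as
\[
 \R^N=\R f(x)\oplus df(T_xM)\oplus N_xM,
\]
where $N_xM$ is the normal space inside the sphere. Writing $P_{T_xM}$ for the projection onto the tangent summand, this gives the pointwise identity
\[
 I=f(x)\otimes f(x)+P_{T_xM}+P_{N_xM}.
\]
Integrating over $M$ yields
\[
 \Vol(M)I=G+\int_MP_{T_xM}\dd V_x+C .
\]
The whole task then reduces to identifying the tangent term with $nG$.

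For a fixed $a\in\R^N$, we have $\langle P_{T_xM}a,a\rangle=|a^T|^2=|\nabla\varphi_a|^2$ by \eqref{eq:height-identities}, where $\varphi_a=\langle f,a\rangle$. Takahashi's identity $\Delta\varphi_a=-n\varphi_a$ holds for every vector $a$, not only unit ones, by linearity. Integrating by parts on the closed manifold $M$ then gives
\[
 \int_M|\nabla\varphi_a|^2\dd V=n\int_M\varphi_a^2\dd V=n\langle Ga,a\rangle .
\]
Hence the symmetric endomorphisms $\int_MP_{T_xM}\dd V_x$ and $nG$ have the same quadratic form. By polarization they coincide, and $C=\Vol(M)I-(n+1)G$ follows.

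For the trace, we use that $\tr P_{N_xM}=\dim N_xM=q$ at every point, so integrating gives $\tr C=q\Vol(M)$ directly. As a consistency check, $\tr G=\int_M|f|^2\dd V=\Vol(M)$, and therefore $N\Vol(M)-(n+1)\Vol(M)=q\Vol(M)$. Finally, $\langle P_{N_xM}a,a\rangle=|a^\perp|^2$ because $P_{N_xM}$ is an orthogonal projection, so integrating yields $\int_M|a^\perp|^2\dd V=\langle Ca,a\rangle$.

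There is no real obstacle. The only care needed is in applying the height-function identities to non-unit $a$, which is immediate since they are linear (respectively quadratic) in $a$, and in noting that minimality, used through $\Delta\varphi_a=-n\varphi_a$, is what produces the factor $n+1$.
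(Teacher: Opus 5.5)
Your proposal is correct and follows essentially the same argument as the paper: the pointwise splitting $I=f\otimes f+P_{T_xM}+P_{N_xM}$, integrated over $M$, with $\int_M P_{T_xM}\dd V=nG$ obtained from Takahashi's identity and integration by parts. The only differences are cosmetic: you match quadratic forms and polarize where the paper uses the bilinear form directly, and you get $\tr C=q\Vol(M)$ from $\tr P_{N_xM}=q$ rather than by taking the trace of $C=\Vol(M)I-(n+1)G$.
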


\begin{proof}
At every point of $M$, the Euclidean orthogonal decomposition is
\[
 I=f\otimes f+P_{T_xM}+P_{N_xM}.
\]
For $a,b\in\R^N$, Takahashi's identity and integration by parts give
\begin{align*}
 \int_M\langle a^T,b^T\rangle\dd V
 &=\int_M\langle\nabla\varphi_a,\nabla\varphi_b\rangle\dd V\\
 &=n\int_M\varphi_a\varphi_b\dd V
 =\langle nGa,b\rangle.
\end{align*}
Thus $\int_MP_{T_xM}\dd V=nG$.  Integrating the pointwise orthogonal decomposition proves the first formula in \eqref{eq:integrated-normal}; taking traces proves the second.  The final identity follows directly from the definition of $C$.
\end{proof}

Let $p_1,\ldots,p_L\in f(M)$ and let
$w_1,\ldots,w_L>0$ with $\sum_{\ell=1}^Lw_\ell=1$.  Define the image-frame operator
\[
 \mathscr P_\mu=\sum_{\ell=1}^Lw_\ell p_\ell\otimes p_\ell,
 \qquad
 \tau_\mu=\lambda_{\min}(\mathscr P_\mu).
\]
For an image point $p$, write
\[
 m(p)=\#f^{-1}(p),
 \qquad
 \mathcal K(p)=\sum_{x\in f^{-1}(p)}
 \left(\frac{S(x)}{S_*}\right)^2,
\]
and define the weighted averages
\[
 \overline m_\mu=\sum_{\ell=1}^Lw_\ell m(p_\ell),
 \qquad
 \overline{\mathcal K}_\mu
 =\sum_{\ell=1}^Lw_\ell\mathcal K(p_\ell).
\]

\begin{theorem}\label{thm:frame-full}
If $\tau_\mu>0$, then
\begin{equation}\label{eq:frame-full}
 \frac{\Vol(M)}{\omega_n}
 \ge
 \frac{\overline m_\mu+
 \varepsilon_n\overline{\mathcal K}_\mu}
 {1-q\tau_\mu/(n+2)}.
\end{equation}
The denominator is positive.  A linearly full immersion admits a finite image frame for which $\tau_\mu>0$.
\end{theorem}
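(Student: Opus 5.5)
Apply \Cref{thm:two-remainders} together with \Cref{thm:defect-weighted} at each frame point $p_\ell$, keeping the global normal-energy term this time, and then average with the weights $w_\ell$.

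For each $\ell$ we obtain
\[
 \Vol(M)\ge\bigl[m(p_\ell)+\varepsilon_n\mathcal K(p_\ell)\bigr]\omega_n
 +\frac1{n+2}\int_M|p_\ell^\perp|^2\dd V .
\]
By \Cref{lem:integrated-normal}, $\int_M|p_\ell^\perp|^2\dd V=\langle Cp_\ell,p_\ell\rangle$. Multiplying by $w_\ell$ and summing gives
\[
 \Vol(M)\ge(\overline m_\mu+\varepsilon_n\overline{\mathcal K}_\mu)\omega_n
 +\frac1{n+2}\tr(C\mathscr P_\mu).
\]
The key step is to bound the trace from below. Since $C$ is positive semidefinite (it is an integral of projections) and $\mathscr P_\mu\ge\tau_\mu I$, we have
\[
 \tr(C\mathscr P_\mu)=\tr\bigl(C^{1/2}\mathscr P_\mu C^{1/2}\bigr)\ge\tau_\mu\tr C=\tau_\mu q\Vol(M),
\]
again using \Cref{lem:integrated-normal}. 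Rearranging yields $\Vol(M)\bigl(1-q\tau_\mu/(n+2)\bigr)\ge(\overline m_\mu+\varepsilon_n\overline{\mathcal K}_\mu)\omega_n$, which is \eqref{eq:frame-full} once the denominator is known to be positive.

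For positivity of the denominator, note that $|p_\ell|=1$, so $\tr\mathscr P_\mu=\sum w_\ell=1$, whence $N\tau_\mu\le1$ and $\tau_\mu\le1/(n+q+1)$. Therefore
\[
 \frac{q\tau_\mu}{n+2}\le\frac{q}{(n+2)(n+q+1)}<1,
\]
since $q<(n+2)(n+q+1)$. Alternatively, positivity follows from the inequality itself: it gives $\Vol(M)\bigl(1-q\tau_\mu/(n+2)\bigr)>0$ because the right-hand side is at least $\omega_n>0$. This is strictly positive because $\overline m_\mu\ge1$.

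Existence of a frame with $\tau_\mu>0$ comes from linear fullness. If $f(M)$ spanned a proper subspace, the image would lie in a proper linear subsphere. Hence $f(M)$ spans $\R^N$, and we may pick $N$ linearly independent image points $p_1,\dots,p_N$ with equal weights $1/N$. Then
\[
 \langle\mathscr P_\mu a,a\rangle=\frac1N\sum_\ell\langle p_\ell,a\rangle^2,
\]
which vanishes only when $a\perp$ every $p_\ell$, i.e.\ only for $a=0$. So $\mathscr P_\mu$ is positive definite and $\tau_\mu>0$.

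The only slightly delicate point is the trace inequality $\tr(C\mathscr P)\ge\lambda_{\min}(\mathscr P)\tr C$. It needs $C\ge0$, which holds since $C$ is an integral of orthogonal projections; everything else is bookkeeping with the earlier results.
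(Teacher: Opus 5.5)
Your proposal is correct and follows the paper's proof essentially step by step. You average \Cref{thm:two-remainders} with the bound $\mathcal R_{p_\ell}\ge\beta_n\varepsilon_n\mathcal K(p_\ell)$ over the frame, use $C\ge0$ to get $\tr(\mathscr P_\mu C)\ge\tau_\mu\tr C=q\tau_\mu\Vol(M)$, deduce positivity of the denominator from $\tr\mathscr P_\mu=1$, and take $N$ spanning image points for existence — exactly as the paper does (your extra ``alternatively'' argument for positivity is also valid).
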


\begin{proof}
Apply \Cref{thm:two-remainders} to every $p_\ell$, multiply by $w_\ell$, and sum.  By \Cref{thm:defect-weighted},
\[
 \mathcal R_{p_\ell}\ge
 \beta_n\varepsilon_n\mathcal K(p_\ell).
\]
Furthermore, \Cref{lem:integrated-normal} and the positivity of $C$ give
\begin{align*}
 \sum_{\ell=1}^Lw_\ell
 \int_M|p_\ell^\perp|^2\dd V
 &=\tr(\mathscr P_\mu C)\\
 &\ge\tau_\mu\tr C
 =q\tau_\mu\Vol(M).
\end{align*}
Thus
\[
 \Vol(M)\ge
 \omega_n\bigl(\overline m_\mu+
 \varepsilon_n\overline{\mathcal K}_\mu\bigr)
 +\frac{q\tau_\mu}{n+2}\Vol(M),
\]
which is \eqref{eq:frame-full}.  Since $\tr \mathscr P_\mu=1$,
$\tau_\mu\le1/N$, and hence
$q\tau_\mu/(n+2)<1$.  If the immersion is linearly full, choose $N$ image points spanning $\R^N$ and give them positive weights.  Then $\mathscr P_\mu$ is positive definite.
\end{proof}

\begin{remark}
Linear fullness alone guarantees a frame with $\tau_\mu>0$, but not a uniform lower bound for $\tau_\mu$ in terms of $n$ and $q$.  The next argument uses the entire height-function space and gives a universal codimension-sensitive estimate.
\end{remark}

\markbegin
\subsection{A mean-value estimate on the Euclidean minimal cone}

Let
\[
 \widehat M=(0,\infty)\times M,
 \qquad
 F(r,x)=rf(x)\in\R^N.
\]
With the cone metric $\widehat g=\dd r^2+r^2g$, the map $F$ is a minimal immersion of dimension $n+1$.  For
$\varphi_a(x)=\langle f(x),a\rangle$, define
\[
 \widehat{\varphi}_a(r,x)=r \varphi_a(x)=\langle F(r,x),a\rangle.
\]
The cone Laplacian is
\[
 \Delta_{\widehat M}
 =\partial_r^2+\frac nr\partial_r+\frac1{r^2}\Delta_M.
\]
Since $\Delta_M\varphi_a=-n\varphi_a$,
\[
 \Delta_{\widehat M}\widehat{\varphi}_a=0,
 \qquad
 \Delta_{\widehat M}\widehat{\varphi}_a^2
 =2|\widehat\nabla \widehat{\varphi}_a|^2\ge0.
\]

The following identity is the smooth minimal-immersion form of the classical mean-value inequalities of Michael--Simon \cite{MichaelSimon1973}.  We include its exact proof because the cone is immersed and the optimizing balls may cross the vertex.

\begin{lemma}[Michael--Simon \cite{MichaelSimon1973}]\label{lem:mean-value-general}
Let $F:\Sigma^d\looparrowright\R^N$ be minimal, let $z\in\R^N$, put
$R=|F-z|$, and let $\Phi\ge0$ be a smooth subharmonic function.  Let $J\subset(0,\infty)$ be an interval such that $\{R<s\}$ has compact closure in $\Sigma$ for every $s\in J$.  Then, at every regular value $s\in J$,
\begin{align}\label{eq:mean-value-monotonicity}
 \frac{\dd}{\dd s}
 \left(s^{-d}\int_{\{R<s\}}\Phi\dd V\right)
 ={}&s^{-d-1}\int_{\{R=s\}}
 \frac{\Phi|(F-z)^\perp|^2}{s|\nabla R|}\dd\sigma\notag\\
 &+\frac12s^{-d-1}
 \int_{\{R<s\}}(s^2-R^2)\Delta\Phi\dd V.
\end{align}
In particular, the normalized integral is nondecreasing on $J$.
\end{lemma}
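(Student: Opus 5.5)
We prove the Michael--Simon identity \eqref{eq:mean-value-monotonicity} by the classical route: test the minimal-submanifold divergence identity with the position field, then convert the resulting relation between a volume integral and a level-set integral into a derivative formula via the coarea formula. Throughout, write $X=F-z$, viewed as an $\R^N$-valued function on $\Sigma$. Since $F$ is minimal, $\Delta_\Sigma F=0$. Hence $\operatorname{div}_\Sigma X^T=d$ and $\tfrac12\Delta R^2=d$. Moreover $\nabla R=X^T/R$, so $|\nabla R|^2=|X^T|^2/R^2$ and $R^2|\nabla R|^2+|X^\perp|^2=R^2$. These are the only geometric inputs. Because $\{R<s\}$ is relatively compact for $s\in J$, every integration by parts below is over a compact domain, and immersedness causes no difficulty: at a regular value $s$ of $R$, the set $\{R=s\}$ is a smooth compact hypersurface in $\Sigma$.

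\emph{Step 1 (the volume identity).} Apply the divergence theorem on $\{R<s\}$ to the vector field $\Phi X^T$. Its divergence is $d\Phi+\langle\nabla\Phi,X^T\rangle$, and the outward unit normal on $\{R=s\}$ is $\nabla R/|\nabla R|$. This gives
\[
 d\int_{\{R<s\}}\Phi\dd V+\int_{\{R<s\}}\langle\nabla\Phi,X^T\rangle\dd V
 =\int_{\{R=s\}}\Phi\,s|\nabla R|\dd\sigma .
\]
For the gradient term, write $\langle\nabla\Phi,X^T\rangle=\tfrac12\langle\nabla\Phi,\nabla(R^2-s^2)\rangle$. Green's formula then turns it into $\tfrac12\int_{\{R<s\}}(s^2-R^2)\Delta\Phi\dd V$, with no boundary term because $R^2-s^2$ vanishes on $\{R=s\}$. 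This is exactly how the second term of \eqref{eq:mean-value-monotonicity} arises.

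\emph{Step 2 (derivative of the volume integral).} By the coarea formula, at regular values we have $\frac{\dd}{\dd s}\int_{\{R<s\}}\Phi\dd V=\int_{\{R=s\}}\Phi/|\nabla R|\dd\sigma$. Substituting this and Step 1 into
\[
 \frac{\dd}{\dd s}\Bigl(s^{-d}\int_{\{R<s\}}\Phi\Bigr)=s^{-d-1}\Bigl(s\int_{\{R=s\}}\frac{\Phi}{|\nabla R|}\dd\sigma-d\int_{\{R<s\}}\Phi\dd V\Bigr)
\]
yields the boundary combination $\int_{\{R=s\}}\Phi\bigl(s/|\nabla R|-s|\nabla R|\bigr)\dd\sigma$ plus the $\Delta\Phi$ term. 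On $\{R=s\}$ the pointwise identity gives $s^2(1-|\nabla R|^2)=|X^\perp|^2$, so the integrand equals $\Phi|X^\perp|^2/(s|\nabla R|)$. This is the first term of \eqref{eq:mean-value-monotonicity}, and the identity follows.

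\emph{Step 3 (monotonicity).} Both terms on the right are nonnegative, because $\Phi\ge0$, $\Delta\Phi\ge0$, and $s^2-R^2>0$ on $\{R<s\}$. To pass from a sign on the derivative to monotonicity, we use, as in \Cref{prop:Theta}, that the normalized integral is locally absolutely continuous in $s$: by coarea, $\int_{\{R<s\}}\Phi$ is an integral in $s$ of a locally integrable function. Sard's theorem says almost every value of $R$ is regular, so the derivative is nonnegative almost everywhere and monotonicity follows.

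\emph{Main obstacle.} The main technical point is Step 3, where monotonicity has to be deduced across critical values of $R$. This includes the point $z$ itself, where $|\nabla R|$ degenerates, and, in the intended application to $\widehat M$, balls whose closures contain the vertex region. The hypothesis that every $\{R<s\}$ with $s\in J$ has compact closure in $\Sigma$ guarantees the needed integrability. The absolute-continuity argument then handles the critical values without requiring them to be regular.
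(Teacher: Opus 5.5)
Your argument is correct and is essentially the paper's proof. Applying the divergence theorem to $\Phi X^T$ and then Green's first identity with $\tfrac12(R^2-s^2)$ is just the unpacked form of the paper's single use of Green's second identity with $\psi=(s^2-R^2)/2$, and the coarea substitution, the identity $s^2(1-|\nabla R|^2)=|(F-z)^\perp|^2$, and the absolute-continuity step are the same.

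Only your closing remark is off: the compact-closure hypothesis excludes balls reaching the cone vertex rather than handling them, and the paper treats that case separately by truncating at $\{r=\epsilon\}$ in \Cref{lem:cone-mean}.
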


\begin{proof}
Let $\Omega_s=\{R<s\}$ and
$I(s)=\int_{\Omega_s}\Phi\dd V$.  Minimality gives
\[
 \Delta R^2=2d.
\]
Apply Green's second identity on $\Omega_s$ to $\Phi$ and
$\psi=(s^2-R^2)/2$.  Since $\psi=0$ on $\partial\Omega_s$,
$\Delta\psi=-d$, and
$\partial_\nu\psi=-s|\nabla R|$, one obtains
\[
 dI(s)=s\int_{\{R=s\}}\Phi|\nabla R|\dd\sigma
 -\frac12\int_{\Omega_s}(s^2-R^2)\Delta\Phi\dd V.
\]
Coarea gives
\[
 I'(s)=\int_{\{R=s\}}\frac{\Phi}{|\nabla R|}\dd\sigma.
\]
Therefore
\begin{align*}
 \frac{\dd}{\dd s}(s^{-d}I(s))
 =s^{-d-1}\biggl[&s\int_{\{R=s\}}
 \Phi\left(\frac1{|\nabla R|}-|\nabla R|\right)\dd\sigma\\
 &+\frac12\int_{\Omega_s}(s^2-R^2)\Delta\Phi\dd V\biggr].
\end{align*}
On $\{R=s\}$,
\[
 |\nabla R|^2=\frac{|(F-z)^T|^2}{s^2}
 =1-\frac{|(F-z)^\perp|^2}{s^2}.
\]
Substitution gives \eqref{eq:mean-value-monotonicity}.  The calculation was made at regular values of $R$.  By the coarea formula, the normalized integral is locally absolutely continuous on $J$; approximating any radius in $J$ from above and below by regular values extends the monotonicity statement to all of $J$.
\end{proof}

\begin{lemma}\label{lem:cone-mean}
For every $a\in\R^N$ and every $\rho>0$,
\begin{equation}\label{eq:cone-Linf-rho}
 \|\varphi_a\|_{L^\infty(M)}^2
 \le
 \frac{(1+\rho)^{n+3}}
 {(n+3)\beta_{n+1}\rho^{n+1}}
 \int_M\varphi_a^2\dd V.
\end{equation}
Consequently,
\begin{equation}\label{eq:cone-Linf}
 \|\varphi_a\|_{L^\infty(M)}^2
 \le\Lambda_n\int_M\varphi_a^2\dd V,
 \qquad
 \Lambda_n=
 \frac{(n+3)^{n+2}}
 {4\beta_{n+1}(n+1)^{n+1}}.
\end{equation}
\end{lemma}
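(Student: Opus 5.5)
The plan is to apply \Cref{lem:mean-value-general} on the cone $F:\widehat M\looparrowright\R^N$, of dimension $d=n+1$, to the nonnegative subharmonic function $\Phi=\widehat\varphi_a^2$. I would center the balls at a point of the cone lying over a maximum of $|\varphi_a|$, compare the small-radius limit with the radius-$\rho$ integral, and then optimize in $\rho$.

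First, choose $x_0\in M$ with $|\varphi_a(x_0)|=\|\varphi_a\|_{L^\infty(M)}$, and put $z=f(x_0)=F(1,x_0)$ and $R=|F-z|$. As $s\to0^+$, the set $\{R<s\}$ contains a small embedded sheet through $(1,x_0)$, so
\[
 \liminf_{s\to0}s^{-(n+1)}\int_{\{R<s\}}\Phi\dd V\ge\beta_{n+1}\Phi(1,x_0)=\beta_{n+1}\varphi_a(x_0)^2 .
\]
Any further preimages of $z$ only increase the left side. By the monotonicity in \Cref{lem:mean-value-general}, for every $\rho>0$ we then get
\[
 \beta_{n+1}\|\varphi_a\|_\infty^2\le\rho^{-(n+1)}\int_{\{R<\rho\}}r^2\varphi_a^2\dd V_{\widehat g}.
\]
Since $|F|=r$ and $|z|=1$, the set $\{R<\rho\}$ lies in $\{r<1+\rho\}$. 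With $\dd V_{\widehat g}=r^n\dd r\,\dd V_g$, the right side is at most
\[
 \rho^{-(n+1)}\int_0^{1+\rho}r^{n+2}\dd r\int_M\varphi_a^2\dd V=\frac{(1+\rho)^{n+3}}{(n+3)\rho^{n+1}}\int_M\varphi_a^2\dd V,
\]
which is \eqref{eq:cone-Linf-rho}.

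The main obstacle is the hypothesis in \Cref{lem:mean-value-general} that $\{R<s\}$ have compact closure in $\widehat M$. This fails as soon as $s\ge1$, when the ball reaches the vertex $r=0$. For $s<1$ there is no problem, since $r>1-s>0$ on the ball. For larger $s$, I would redo the Green identity argument of \Cref{lem:mean-value-general} on $\{R<s\}\cap\{r>\delta\}$ and let $\delta\to0$. The extra boundary integrals over $\{r=\delta\}$ involve $\Phi=O(\delta^2)$, $|\widehat\nabla\Phi|=O(\delta)$, $\psi$ and $\nabla\psi$ bounded, and area $O(\delta^n)$, so they vanish in the limit. This recovers \eqref{eq:mean-value-monotonicity} and hence monotonicity on all of $(0,\infty)$. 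An alternative is to approximate $\Phi$ by $\chi(r)\Phi$ with a cutoff near the vertex, but the direct truncation seems cleaner.

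Finally, I would optimize in $\rho$. Minimizing $(1+\rho)^{n+3}\rho^{-(n+1)}$ gives the condition $(n+3)/(1+\rho)=(n+1)/\rho$, that is $\rho=(n+1)/2$ and $1+\rho=(n+3)/2$. The minimum value is $(n+3)^{n+3}/\bigl(4(n+1)^{n+1}\bigr)$. Dividing by $(n+3)\beta_{n+1}$ yields exactly $\Lambda_n$ in \eqref{eq:cone-Linf}. Note that this optimal radius exceeds $1$, so the vertex-crossing step above is genuinely needed.
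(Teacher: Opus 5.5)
Your proposal is correct and follows essentially the same route as the paper. You center at $z=f(x_0)$ over a maximum of $|\varphi_a|$, and you justify the Michael--Simon monotonicity across the vertex by truncating at $\{r>\delta\}$, with inner boundary terms of order $O(\delta^{n+2})$ and $O(\delta^{n+1})$. You then bound the ball by $\{r<1+\rho\}$ and optimize at $\rho=(n+1)/2$; your single-sheet lower bound using $\Phi\ge0$ is a slightly leaner substitute for the paper's multiplicity remark.
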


\begin{proof}
Choose $x_0\in M$ with
$|\varphi_a(x_0)|=\|\varphi_a\|_\infty$ and put $z=f(x_0)$.  Repeat the proof of \Cref{lem:mean-value-general} on the truncated cone $\{r>\epsilon\}$ with $d=n+1$ and $\Phi=\widehat{\varphi}_a^2$.  For fixed $\epsilon>0$, the relevant extrinsic sublevel set is relatively compact because $M$ is compact and $r$ is bounded above on an extrinsic ball.  The truncation introduces an inner boundary $\{r=\epsilon\}$.  On this boundary,
\[
 \widehat{\varphi}_a^2=O(\epsilon^2),
 \qquad
 |\partial_r(\widehat{\varphi}_a^2)|=O(\epsilon),
 \qquad
 \dd\sigma_\epsilon=\epsilon^n\dd V_M.
\]
In Green's identity the two extra inner-boundary terms have the form
\[
 \int_{\{r=\epsilon\}}\widehat{\varphi}_a^2\,\partial_\nu\psi\,\dd\sigma_\epsilon,
 \qquad
 -\int_{\{r=\epsilon\}}\psi\,\partial_\nu(\widehat{\varphi}_a^2)\,\dd\sigma_\epsilon,
\]
where $\psi=(\rho^2-R^2)/2$.  Since $\psi$ and $\partial_\nu\psi$ are uniformly bounded as $\epsilon\downarrow0$, these two terms are respectively $O(\epsilon^{n+2})$ and $O(\epsilon^{n+1})$, and both vanish.  Thus letting $\epsilon\downarrow0$ proves the mean-value monotonicity formula across the cone vertex.

At the point $(1,x_0)$, the small extrinsic-ball density of the immersed cone is a positive integer, equal to the number of cone sheets through $z$.  Hence
\[
 \widehat{\varphi}_a(1,x_0)^2
 \le\frac1{\beta_{n+1}\rho^{n+1}}
 \int_{\{|F-z|<\rho\}}\widehat{\varphi}_a^2\dd V_{\widehat M}.
\]
This inequality remains valid when several sheets pass through $z$, because the small-radius limit is then the multiplicity times
$\widehat{\varphi}_a(1,x_0)^2$.

If $|rf(x)-z|<\rho$, the reverse triangle inequality gives
$|r-1|<\rho$, and in particular $0<r<1+\rho$.  Since
$\dd V_{\widehat M}=r^n\dd r\dd V_M$,
\begin{align*}
 \int_{\{|F-z|<\rho\}}\widehat{\varphi}_a^2\dd V_{\widehat M}
 &\le\int_M\int_0^{1+\rho}
 r^{n+2}\varphi_a(x)^2\dd r\dd V\\
 &=\frac{(1+\rho)^{n+3}}{n+3}
 \int_M\varphi_a^2\dd V.
\end{align*}
This proves \eqref{eq:cone-Linf-rho}.  The logarithmic derivative of
$(1+\rho)^{n+3}/\rho^{n+1}$ vanishes only at
$\rho=(n+1)/2$, which is the global minimum.  Substitution gives
\eqref{eq:cone-Linf}.
\end{proof}

\begin{theorem}\label{thm:cone-full}
If the immersion is linearly full, then
\begin{equation}\label{eq:cone-full}
 \frac{\Vol(M)}{\omega_n}
 \ge b_n(n+q+1),
 \qquad
 b_n=\frac{4(n+1)^n}{(n+3)^{n+2}}.
\end{equation}
Moreover,
\[
 \frac{\Vol(M)}{\omega_n}>
 \frac{n+q+1}{2(n+3)^2}.
\]
\end{theorem}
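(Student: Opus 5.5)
The plan is to apply \Cref{lem:cone-mean} to every height function at once and then take a trace. Fix an orthonormal basis $e_1,\ldots,e_N$ of $\R^N$. At any point $x\in M$ we have $\sum_{i}\varphi_{e_i}(x)^2=|f(x)|^2=1$. Consider the operator $G=\int_M f\otimes f\dd V$ from \Cref{lem:integrated-normal}. Linear fullness means $\langle Ga,a\rangle=\int_M\varphi_a^2\dd V>0$ for every $a\ne0$. Hence $G$ is positive definite, and $\tr G=\Vol(M)$.

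The key step is a pointwise bound for the quadratic form $a\mapsto\varphi_a(x)^2$ at a fixed $x$. For every $a$, \eqref{eq:cone-Linf} gives
\[
 \langle f(x),a\rangle^2\le\|\varphi_a\|_\infty^2\le\Lambda_n\langle Ga,a\rangle .
\]
So $f(x)\otimes f(x)\le\Lambda_n G$ as quadratic forms. Conjugating by $G^{-1/2}$ gives
\[
 \langle G^{-1}f(x),f(x)\rangle\le\Lambda_n ,
\]
since the left side is the largest eigenvalue of the rank-one form $G^{-1/2}f\otimes fG^{-1/2}$. Integrating over $M$ then yields
\[
 \int_M\langle G^{-1}f,f\rangle\dd V=\tr(G^{-1}G)=N .
\]
Therefore $N\le\Lambda_n\Vol(M)$, that is, $\Vol(M)\ge N/\Lambda_n$. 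This trace identity is exactly where the full dimension $N=n+q+1$ enters, and it needs $G$ to be invertible, which is the linear fullness hypothesis.

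Next I convert $1/\Lambda_n$ into the normalization by $\omega_n$. Using $\beta_{n+1}=\omega_n/(n+1)$,
\[
 \frac{1}{\Lambda_n\omega_n}=\frac{4\beta_{n+1}(n+1)^{n+1}}{\omega_n(n+3)^{n+2}}=\frac{4(n+1)^n}{(n+3)^{n+2}}=b_n ,
\]
which proves \eqref{eq:cone-full}. For the cruder bound it is enough to show $b_n>\frac{1}{2(n+3)^2}$. This is equivalent to
\[
 \left(1+\frac{2}{n+1}\right)^n<8 ,
\]
which follows from $(1+2/(n+1))^n<e^{2n/(n+1)}<e^2<8$.

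The main obstacle is the step where \Cref{lem:cone-mean} is used. That lemma is stated for a single $a$ with an $L^\infty$ bound. I need it in the sharper pointwise-at-$x$ form for all $a$ simultaneously, but the sup-norm bound trivially dominates the value at $x$, so it suffices. The other point needing care is the operator inequality argument: the passage from $f f^T\le\Lambda_n G$ to $f^TG^{-1}f\le\Lambda_n$ is standard for rank-one forms, but it requires that $G$ be strictly positive.
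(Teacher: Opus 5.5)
Your proof is correct and is essentially the paper's argument: $\langle G^{-1}f(x),f(x)\rangle$ equals the kernel $K(x)=\sum_A\varphi_A(x)^2$ built from an $L^2(M)$-orthonormal basis of the height-function space, and the paper likewise bounds $K$ pointwise by $\Lambda_n$ using \Cref{lem:cone-mean}, then integrates to get $N\le\Lambda_n\Vol(M)$. Your conversion to $b_n$ via $\beta_{n+1}=\omega_n/(n+1)$ matches the paper, and so does the cruder bound $b_n>1/[2(n+3)^2]$, which the paper also obtains from $(1+2/(n+1))^n<e^2<8$.
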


\begin{proof}
The space of height functions
\[
 \mathcal H_f=\{\varphi_a=\langle f,a\rangle:a\in\R^N\}
\]
has dimension $N$ by linear fullness.  Choose an
$L^2(M)$-orthonormal basis $\varphi_1,\ldots,\varphi_N$ and put
\[
 K(x)=\sum_{A=1}^N\varphi_A(x)^2.
\]
For fixed $x$, the norm of the evaluation functional on $\mathcal H_f$ is $K(x)^{1/2}$.  Thus there is a function
$\varphi\in\mathcal H_f$ with $\|\varphi\|_2=1$ and $\varphi(x)^2=K(x)$.  By
\Cref{lem:cone-mean}, $K(x)\le\Lambda_n$.  Integrating gives
\[
 N=\int_MK\dd V\le\Lambda_n\Vol(M).
\]
Since $\omega_n=(n+1)\beta_{n+1}$, this is
\eqref{eq:cone-full}.

Finally,
\[
 \left(\frac{n+1}{n+3}\right)^n
 =\left(1+\frac2{n+1}\right)^{-n}
 >e^{-2}>\frac18.
\]
Therefore $b_n>1/[2(n+3)^2]$, proving the last assertion.
\end{proof}

\begin{proof}[\textbf{Proof of \Cref{thm:full-main}}]
Apply \Cref{thm:main} at $p_*=f(x_*)$.  Since
\[
 m_*+\varepsilon_n\mathcal K_*
 =1+\varepsilon_n
 +(m_*-1)+\varepsilon_n(\mathcal K_*-1),
\]
we obtain
\[
 \frac{\Vol(M)}{\omega_n}
 \ge m_*+\varepsilon_n\mathcal K_*.
\]
On the other hand, \Cref{thm:cone-full} gives
\[
 \frac{\Vol(M)}{\omega_n}\ge b_n(n+q+1).
\]
Taking the larger of these two independent lower bounds proves \eqref{eq:full-max}.
\end{proof}

\begin{proof}[\textbf{Proof of \Cref{cor:full-additive}}]
Set
\[
 A=m_*+\varepsilon_n\mathcal K_*,
 \qquad
 B=b_n(n+q+1).
\]
The number $(n+1)b_n$ is smaller than $4/(n+3)$ and hence is less than $1$.  Define
\[
 \theta_n=
 \frac{\varepsilon_n}
 {1+\varepsilon_n-(n+1)b_n}.
\]
Then $0<\theta_n<1$, $1-\theta_n=\rho_n$, and
$\theta_nb_n=\delta_n$, where $\rho_n$ and $\delta_n$ are given in
\eqref{eq:full-constants}.  Since \Cref{thm:full-main} gives both
$\Vol(M)/\omega_n\ge A$ and $\Vol(M)/\omega_n\ge B$, it also gives their convex combination.  Using
\[
 (1-\theta_n)(1+\varepsilon_n)
 +\theta_nb_n(n+1)=1
\]
yields exactly \eqref{eq:full-additive}.  The explicit estimates for $\delta_n$ are established in \Cref{prop:delta-full}.
\end{proof}

\begin{proposition}\label{prop:delta-full}
The coefficient $\delta_n$ in \eqref{eq:full-constants} satisfies
\begin{equation}\label{eq:delta-polynomial}
 \delta_n>
 \frac{1}{440n(n+2)^2(n+3)^2}
 >\frac{1}{11000n^5}.
\end{equation}
Moreover,
\[
 \delta_n\sim0.004981045225\,n^{-5}.
\]
\end{proposition}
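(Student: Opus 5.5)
We need lower and asymptotic bounds for $\delta_n=\varepsilon_nb_n/(1+\varepsilon_n-(n+1)b_n)$. Since $b_n>0$, $(n+1)b_n>0$ and $\varepsilon_n>0$, the denominator satisfies $0<1+\varepsilon_n-(n+1)b_n<1+\varepsilon_n$. So we plan to use the crude bound
\[
 \delta_n>\frac{\varepsilon_nb_n}{1+\varepsilon_n},
\]
and insert the stated bounds $\varepsilon_n>[110n(n+2)^2]^{-1}$ (to be proved in \Cref{sec:constants} from \eqref{eq:epsilon-definition}) and $b_n>1/[2(n+3)^2]$ from \Cref{thm:cone-full}. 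Since $\varepsilon_n<[104n(n+2)^2]^{-1}\le 1/3328$, we have $1+\varepsilon_n<1.0004$. This gives
\[
 \delta_n>\frac{1}{1.0004\cdot220\,n(n+2)^2(n+3)^2}.
\]
That is only a factor $2$ from the target $440$, so the bound is in fact much better than needed. To be safe we could instead use $(n+1)b_n<4/(n+3)$ in the denominator, but the crude bound already suffices.

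For the second inequality we need $440(n+2)^2(n+3)^2<11000n^4$, that is $(1+2/n)^2(1+3/n)^2<25$. At $n=2$ the left side is $4\cdot6.25=25$, so equality holds and the strict inequality fails there. The slack from the first step fixes this: the chain $\delta_n>\frac{1}{220\cdot1.0004\,n(n+2)^2(n+3)^2}$ beats $1/(11000n^5)$ whenever $(1+2/n)^2(1+3/n)^2<50/1.0004$, which holds for all $n\ge2$ since the left side is at most $25$. So we state the middle inequality with $440$, and deduce $>1/(11000n^5)$ from the stronger $220$-level estimate, or simply note $\le$ with equality only at $n=2$ and strictness coming from the first step.

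For the asymptotics, we first note $b_n=4(n+3)^{-2}(1+2/(n+1))^{-n}\sim4e^{-2}n^{-2}$ and $(n+1)b_n\to0$, so $\delta_n\sim\varepsilon_nb_n$. The main obstacle is the asymptotic of $\varepsilon_n$. We need the following quantities:
\begin{itemize}
\item $\sigma_n\sim(\sqrt2\,n)^{-1}$.
\item $L_n\to13+\sqrt{17}/2$.
\item $D_n\to D_\infty=(\sqrt{L_\infty}+\sqrt3)/\sqrt2$.
\item $E_n\to E_\infty=(1+2D_\infty)/6$.
\item $r_n\sim\sigma_n/E_\infty$, so that $r_n=O(1/n)$.
\end{itemize}
On $[0,r_n]$ the cosine factor tends to $1$ uniformly. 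The sine-quotient factor is $\exp(-(n-1)O(t^2))=1+O(1/n)$, and the quadratic term $t^2/(16n)$ in $P_n$ is negligible relative to $\sigma_n$. Hence, by the substitution $t=r_nu$,
\[
 \varepsilon_n\sim n\int_0^{r_n}t(\sigma_n-E_\infty t)^2\,dt=n\,\frac{\sigma_n^4}{12E_\infty^2}\sim\frac{1}{48E_\infty^2n^3}.
\]
Therefore
\[
 \delta_n\sim\frac{4e^{-2}}{48E_\infty^2}\,n^{-5}=\frac{1}{12e^2E_\infty^2}\,n^{-5}.
\]
The final step is to evaluate this constant numerically and check that it equals $0.004981045225$. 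This requires computing $L_\infty\approx15.0616$, then $D_\infty$ and $E_\infty$, to enough digits.
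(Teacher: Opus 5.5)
Your treatment of the two inequalities follows the paper's proof. You substitute $\varepsilon_n>[110n(n+2)^2]^{-1}$ and $b_n>[2(n+3)^2]^{-1}$, bound the denominator, and then compare with $n^5$. Your denominator bound $1+\varepsilon_n<1.0004$ is sharper than the paper's $<2$ and is fine.

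You are right that the middle inequality in \eqref{eq:delta-polynomial} is an equality at $n=2$, since $440\cdot2\cdot16\cdot25=11000\cdot32$. The paper's justification via $n+2\le2n$ and $n+3\le5n/2$ likewise gives only $\ge$ there. The strict bound $\delta_n>1/(11000n^5)$ comes from the first strict step.

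One repair is needed. Positivity of $1+\varepsilon_n-(n+1)b_n$ does not follow from $b_n,\varepsilon_n>0$. You need $(n+1)b_n=\frac{4}{n+3}\bigl(\frac{n+1}{n+3}\bigr)^{n+1}<\frac{4}{n+3}<1$, as in the proof of \Cref{cor:full-additive}. Without it, the bound $\delta_n>\varepsilon_nb_n/(1+\varepsilon_n)$ is unjustified; this inequality is required, not optional.

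The genuine error is in the asymptotic constant. You write $L_n\to13+\sqrt{17}/2\approx15.0616$. However,
$\gamma_n=\frac{n+3}{2n}+\frac{\sqrt{17n^2-26n+9}}{2n}\to\frac12+\frac{\sqrt{17}}{2}$,
so $L_\infty=\frac{27+\sqrt{17}}{2}\approx15.5616$; you dropped the limit $\frac12$ of $\frac{n+3}{2n}$.

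With your value one gets $D_\infty\approx3.969$, $E_\infty\approx1.4897$, and $\frac{1}{12e^2E_\infty^2}\approx0.00508$. So the final numerical check you propose would fail. With the correct value, $D_\infty\approx4.01415$, $E_\infty\approx1.50472$, $C_\infty=\frac{1}{48E_\infty^2}\approx0.0092013$, and $\frac{4e^{-2}}{48E_\infty^2}\approx0.0049810$, which matches the statement.

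Apart from this, your scaling argument for $n^3\varepsilon_n\to\frac{1}{48E_\infty^2}$ is the same dominated-convergence computation as \Cref{prop:epsilon-asymptotic}, which the paper simply cites here. Together with $b_n\sim4e^{-2}n^{-2}$ and the denominator tending to $1$, it gives $\delta_n\sim\varepsilon_nb_n$ exactly as you say.
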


\begin{proof}
By \Cref{prop:epsilon-polynomial},
\[
 \varepsilon_n>\frac1{110n(n+2)^2}.
\]
As shown in the proof of \Cref{thm:cone-full},
$b_n>1/[2(n+3)^2]$.  Also $\varepsilon_n<1$, so
\[
 1+\varepsilon_n-(n+1)b_n<2.
\]
Substitution into \eqref{eq:full-constants} gives the first lower bound in \eqref{eq:delta-polynomial}.  Since
$n+2\le2n$ and $n+3\le5n/2$ for $n\ge2$, the second follows.

By \Cref{prop:epsilon-asymptotic},
$\varepsilon_n\sim C_\infty n^{-3}$ with
$C_\infty\approx0.00920130564911321$.  Also
$b_n\sim4e^{-2}n^{-2}$, while the denominator in
\eqref{eq:full-constants} tends to $1$.  Therefore
\[
 \delta_n\sim4e^{-2}C_\infty n^{-5}
 \approx0.004981045225\,n^{-5}.
\]
\end{proof}

\begin{remark}
The factor $n+q+1$ in the second branch of \Cref{thm:full-main} comes from the dimension of the full height-function space and the mean-value monotonicity of the one-homogeneous extensions on the minimal cone.  The $q$-term in \Cref{cor:full-additive} is its normalized convex interpolation with the curvature--multiplicity branch.  No heat-kernel estimate is used.
\end{remark}

\markend

\markbegin
\section{Quantitative hyperplane partitions}\label{sec:hyperplane-partition}

We prove \Cref{thm:hyperplane-partition} by localizing the minimal-cone mean-value argument to a single nodal domain.  The only delicate point is that the squared height function is extended by zero across the lateral boundary of that nodal domain.  We therefore first verify its weak subharmonicity in full detail.

\begin{lemma}\label{lem:weak-zero-extension}
Let $(X,g_X)$ be a smooth Riemannian manifold, let $v\in C^\infty(X)$ satisfy $\Delta_Xv=0$, and let $\mathcal O$ be a connected component of $\{v\ne0\}$.  Define
\[
 \Phi_{\mathcal O}=
 \begin{cases}
 v^2,&\text{on }\mathcal O,\\
 0,&\text{on }X\setminus\mathcal O.
 \end{cases}
\]
Then $\Phi_{\mathcal O}\in W^{1,2}_{\mathrm{loc}}(X)\cap C^0(X)$, its weak gradient is $2v\nabla v$ on $\mathcal O$ and zero almost everywhere on the complement, and for every nonnegative $\zeta\in C_c^\infty(X)$,
\[
 -\int_X\langle\nabla\Phi_{\mathcal O},\nabla\zeta\rangle\dd V_X
 =2\int_{\mathcal O}|\nabla v|^2\zeta\dd V_X\ge0.
\]
In particular, $\Phi_{\mathcal O}$ is weakly subharmonic and its distributional Laplacian is the nonnegative Radon measure
\[
 \Delta_X\Phi_{\mathcal O}
 =2|\nabla v|^2\mathbf 1_{\mathcal O}\dd V_X.
\]
\end{lemma}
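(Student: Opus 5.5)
The plan is to approximate $\Phi_{\mathcal O}$ by smooth truncations and pass to the limit. Continuity comes first. On $\mathcal O$ the function $v^2$ is smooth. Since $\mathcal O$ is a connected component of the open set $\{v\ne0\}$, it is open, and its topological boundary lies in $\{v=0\}$: a boundary point with $v\ne0$ would have a connected neighbourhood inside $\{v\ne0\}$ meeting $\mathcal O$, and that neighbourhood would then belong to $\mathcal O$. Hence $v^2\to0$ at $\partial\mathcal O$, and $\Phi_{\mathcal O}$ is continuous. Moreover $\Phi_{\mathcal O}=v^2\mathbf 1_{\mathcal O}$ is locally Lipschitz, since $|v^2(x)-v^2(y)|$ is controlled across the boundary by the values at points where $v$ vanishes.

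For the weak gradient I introduce, for $\delta>0$, the function $\eta_\delta(s)=\max(s-\delta^2,0)$ and set
\[
 \Phi_\delta=\eta_\delta(v^2)\mathbf 1_{\mathcal O}.
\]
Near $\partial\mathcal O$ one has $v^2<\delta^2$, so $\Phi_\delta$ vanishes on a neighbourhood of $\partial\mathcal O$. More precisely, $\{v^2\ge\delta^2\}\cap\mathcal O$ is relatively closed in $X$, because its closure cannot reach $\partial\mathcal O\subset\{v=0\}$. Consequently $\Phi_\delta$ is Lipschitz on $X$, with gradient $2v\nabla v\,\mathbf 1_{\mathcal O\cap\{v^2>\delta^2\}}$. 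As $\delta\to0$, $\Phi_\delta\to\Phi_{\mathcal O}$ uniformly on compacts and the gradients converge in $L^2_{\mathrm{loc}}$ by dominated convergence. This gives $\Phi_{\mathcal O}\in W^{1,2}_{\mathrm{loc}}$ with weak gradient $2v\nabla v\,\mathbf 1_{\mathcal O}$; the gradient is zero a.e. on the complement because of the indicator.

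For the identity, fix $\zeta\ge0$ in $C_c^\infty(X)$ and use a smooth cutoff version instead. Take $\chi_\delta(s)$ smooth and nondecreasing, with $\chi_\delta=0$ for $s\le\delta^2$, $\chi_\delta=1$ for $s\ge4\delta^2$, and $0\le s\chi_\delta'(s)\le C$. The test function $\zeta\chi_\delta(v^2)\mathbf 1_{\mathcal O}$ is then smooth and compactly supported in $\mathcal O$. Integrating by parts on $\mathcal O$ and using $\Delta v^2=2|\nabla v|^2$ (from $\Delta v=0$) gives
\[
 -\int_{\mathcal O}\chi_\delta(v^2)\langle\nabla v^2,\nabla\zeta\rangle
 =\int_{\mathcal O}\zeta\bigl(2|\nabla v|^2\chi_\delta+\chi_\delta'(v^2)|\nabla v^2|^2\bigr).
\]
The extra term is nonnegative. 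The main obstacle is showing that it tends to zero, rather than merely discarding it, since the statement claims equality. To handle it, write $|\nabla v^2|^2\chi_\delta'(v^2)=4v^2\chi_\delta'(v^2)|\nabla v|^2$. This is bounded by $4C|\nabla v|^2$ and supported in $\{\delta^2<v^2<4\delta^2\}$, a set that shrinks to the empty set as $\delta\to0$. Dominated convergence therefore sends the term to zero.

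On the left side, dominated convergence gives convergence to $-\int_X\langle\nabla\Phi_{\mathcal O},\nabla\zeta\rangle$, using $\chi_\delta\to\mathbf 1_{\{v\ne0\}}$ on $\mathcal O$. On the right side, the first term converges to $2\int_{\mathcal O}|\nabla v|^2\zeta$. This proves the displayed identity. The Radon-measure description of $\Delta_X\Phi_{\mathcal O}$ then follows by the Riesz representation theorem, since the distribution is nonnegative and given by the locally integrable density $2|\nabla v|^2\mathbf 1_{\mathcal O}$.
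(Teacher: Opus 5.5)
Your proof is correct, but it organizes the approximation differently from the paper.

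The paper normalizes $v>0$ on $\mathcal O$, chooses regular values $\epsilon$ of $v$ (Sard), and truncates the function itself: $\Phi_\epsilon=(v-\epsilon)^2$ on $\mathcal O\cap\{v>\epsilon\}$ and $0$ elsewhere. Both $(v-\epsilon)^2$ and its normal derivative vanish on the smooth hypersurface $\{v=\epsilon\}\cap\mathcal O$. Integration by parts on $\mathcal O\cap\{v>\epsilon\}$ therefore gives the exact identity $-\int_X\langle\nabla\Phi_\epsilon,\nabla\zeta\rangle=2\int_{\mathcal O\cap\{v>\epsilon\}}|\nabla v|^2\zeta$ with no error term. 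This single family of approximants yields both the $W^{1,2}_{\mathrm{loc}}$ claim and the identity in the limit $\epsilon_j\downarrow0$.

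You instead use two separate devices. A Lipschitz truncation $\eta_\delta(v^2)\mathbf 1_{\mathcal O}$ handles the Sobolev claim. For the identity, you place a cutoff $\chi_\delta(v^2)$ in the test function, so you only integrate by parts against functions compactly supported in the open set $\mathcal O$. This avoids regular values, boundary integrals over level sets, and the sign normalization. The price is the commutator term $\int\zeta\chi_\delta'(v^2)|\nabla v^2|^2$. You dispose of it correctly via the scale-invariant bound $s\chi_\delta'(s)\le C$ (achieved e.g.\ by $\chi_\delta(s)=\chi(s/\delta^2)$) and dominated convergence. That step is exactly where the vanishing of $\nabla(v^2)=2v\nabla v$ on $\{v=0\}$ enters, and it is why no singular measure appears on $\partial\mathcal O$.

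What the paper's choice buys is reusability. Its approximants are $C^1$ and piecewise smooth, with vanishing value and normal derivative on the free boundary. They are invoked again in the proof of the nodal-cone estimate, to justify Green's identity with the weight $(s^2-R^2)/2$ on truncated extrinsic balls without any lateral boundary term. Your $\eta_\delta(v^2)\mathbf 1_{\mathcal O}$ is only Lipschitz across $\{v^2=\delta^2\}\cap\mathcal O$. Used there, it would produce an extra free-boundary contribution that does tend to zero but would have to be tracked.
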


\begin{proof}
Replacing $v$ by $-v$ if necessary, assume that $v>0$ on $\mathcal O$.  The continuity of $v$ implies $v=0$ on $\partial\mathcal O$.  Let $\epsilon>0$ be a regular value of $v$ and define
\[
 v_{\epsilon,\mathcal O}(x)=
 \begin{cases}
 v(x)-\epsilon,&x\in\mathcal O\text{ and }v(x)>\epsilon,\\
 0,&\text{otherwise}.
 \end{cases}
\]
On every compact subset of $X$, the support of $v_{\epsilon,\mathcal O}$ stays a positive distance from $\partial\mathcal O$, because $v$ vanishes there.  Thus $v_{\epsilon,\mathcal O}$ belongs to $W^{1,2}_{\mathrm{loc}}(X)$ and
\[
 \nabla v_{\epsilon,\mathcal O}
 =\mathbf 1_{\mathcal O\cap\{v>\epsilon\}}\nabla v
 \quad\text{almost everywhere}.
\]
Set $\Phi_{\epsilon}=v_{\epsilon,\mathcal O}^2$.  The boundary value and the normal derivative of $(v-\epsilon)^2$ both vanish on the regular hypersurface $\{v=\epsilon\}\cap\mathcal O$.  Integration by parts on $\mathcal O\cap\{v>\epsilon\}$ therefore gives, for every nonnegative $\zeta\in C_c^\infty(X)$,
\[
 -\int_X\langle\nabla\Phi_{\epsilon},\nabla\zeta\rangle\dd V_X
 =2\int_{\mathcal O\cap\{v>\epsilon\}}|\nabla v|^2\zeta\dd V_X,
\]
because $\Delta_Xv=0$.  Choose regular values $\epsilon_j\downarrow0$.  Dominated convergence gives
\[
 \Phi_{\epsilon_j}\longrightarrow\Phi_{\mathcal O}
 \quad\text{and}\quad
 \nabla\Phi_{\epsilon_j}\longrightarrow
 2v\nabla v\,\mathbf 1_{\mathcal O}
 \quad\text{in }L^2_{\mathrm{loc}}.
\]
Passing to the limit proves the asserted weak identity.  A positive distribution is a Radon measure, and the displayed identity identifies it with
$2|\nabla v|^2\mathbf 1_{\mathcal O}\dd V_X$.
\end{proof}

\begin{lemma}\label{lem:nodal-cone-estimate}
Let $a\in\Sph^{n+q}$ and let $\Omega$ be a connected component of $\{\varphi_a\ne0\}$.  Then
\[
 \|\varphi_a\|_{L^\infty(\Omega)}^2
 \le
 \Lambda_n\int_{\Omega}\varphi_a^2\dd V,
 \qquad
 \Lambda_n=
 \frac{(n+3)^{n+2}}
 {4\beta_{n+1}(n+1)^{n+1}}.
\]
\end{lemma}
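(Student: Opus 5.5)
The plan is to repeat the proof of \Cref{lem:cone-mean}, with the smooth subharmonic function $\widehat\varphi_a^2$ replaced by its zero extension off a single nodal cone.

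\textbf{Setup.} Put $\widehat\Omega=(0,\infty)\times\Omega\subset\widehat M$. Since $\widehat\varphi_a=r\varphi_a$ and $r>0$, $\widehat\Omega$ is a connected component of $\{\widehat\varphi_a\ne0\}$ in $\widehat M$. The function $\widehat\varphi_a$ is harmonic on the cone. \Cref{lem:weak-zero-extension} with $X=\widehat M$ and $v=\widehat\varphi_a$ then shows that
\[
 \Phi=\widehat\varphi_a^2\mathbf 1_{\widehat\Omega}
\]
is continuous, lies in $W^{1,2}_{\rm loc}$, and is weakly subharmonic, with $\Delta\Phi=2|\widehat\nabla\widehat\varphi_a|^2\mathbf 1_{\widehat\Omega}\ge0$.

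\textbf{Base point.} Choose $x_0\in\overline\Omega$ with $|\varphi_a(x_0)|=\|\varphi_a\|_{L^\infty(\Omega)}$. If this supremum is zero there is nothing to prove. Otherwise $x_0\in\Omega$, because $\varphi_a$ vanishes on $\partial\Omega$. Put $z=f(x_0)$.

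\textbf{Weak mean-value identity.} We need \eqref{eq:mean-value-monotonicity} for $\Phi$, with $\Delta\Phi$ read as a measure. The plan is to test the weak identity of \Cref{lem:weak-zero-extension} against $\psi=(s^2-R^2)_+/2$, multiplied by a cutoff $\chi_\epsilon(r)$ that kills a neighborhood of the vertex. The function $\psi$ is only Lipschitz, so either mollify it in $s$ or integrate the identity in $s$. Because $\Phi\in W^{1,2}$, integrating by parts once more moves the derivative onto $\psi$; this gives Green's second identity in weak form. Alternatively, work with the approximants $\Phi_\epsilon=v_{\epsilon,\mathcal O}^2$ from the proof of \Cref{lem:weak-zero-extension}. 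These are $C^{1,1}$ with a nonnegative a.e.\ Laplacian, so the smooth computation of \Cref{lem:mean-value-general} applies to them directly, and one then passes to the limit in $\epsilon$. The vertex terms vanish exactly as in \Cref{lem:cone-mean}, since $\Phi\le\widehat\varphi_a^2=O(\epsilon^2)$ and $|\nabla\Phi|=O(\epsilon)$ on $\{r=\epsilon\}$. The conclusion is that $s^{-n-1}\int_{\{|F-z|<s\}}\Phi\,\dd V$ is nondecreasing.

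\textbf{Conclusion.} $\Phi$ is continuous and $\Phi(1,x_0)=\varphi_a(x_0)^2>0$. Every sheet of the cone through $z$ contributes a nonnegative small-radius limit, and the sheet through $(1,x_0)$ contributes at least $\beta_{n+1}\varphi_a(x_0)^2$. Hence
\[
 \varphi_a(x_0)^2\le\frac{1}{\beta_{n+1}\rho^{n+1}}\int_{\{|F-z|<\rho\}}\Phi\,\dd V .
\]
On the region of integration $r<1+\rho$, so the right side is at most
\[
 \frac{(1+\rho)^{n+3}}{(n+3)\beta_{n+1}\rho^{n+1}}\int_\Omega\varphi_a^2\,\dd V .
\]
Optimizing at $\rho=(n+1)/2$ yields $\Lambda_n$, exactly as in \Cref{lem:cone-mean}.

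\textbf{Main obstacle.} The hardest step is the weak-form justification of the monotonicity formula. It involves a merely continuous $W^{1,2}$ subharmonic function, a Lipschitz test function, and the cone vertex at once. I would first try the $\Phi_\epsilon$ approximation, since it reduces everything to the smooth computation already carried out.
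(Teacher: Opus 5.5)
Your proposal is correct and follows the paper's proof essentially step by step. The paper likewise applies \Cref{lem:weak-zero-extension} on the cone to the zero extension of $\widehat\varphi_a^2$ off $(0,\infty)\times\Omega$. It justifies the monotonicity formula by passing to the limit from the truncations $\Phi_{\Omega,\delta_j}$ (your $\Phi_\epsilon$ approximants), which you call $C^{1,1}$ and the paper calls $C^1$ and piecewise smooth, and it kills the vertex terms as $O(\epsilon^{n+2})$ and $O(\epsilon^{n+1})$. It then concludes, exactly as you do, via the interior maximum point, the sheetwise small-radius density, the bound $r<1+\rho$, and the choice $\rho=(n+1)/2$.
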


\begin{proof}
Use the Euclidean minimal cone
\[
 \widehat M=(0,\infty)\times M,
 \qquad
 F(r,x)=rf(x),
 \qquad
 \widehat\varphi_a(r,x)=r\varphi_a(x).
\]
As in \Cref{sec:full}, the cone has dimension $d=n+1$ and
$\Delta_{\widehat M}\widehat\varphi_a=0$.  The set
$\widehat\Omega=(0,\infty)\times\Omega$ is a connected component of
$\{\widehat\varphi_a\ne0\}$.  Extend the square of the homogeneous height function by zero:
\[
 \Phi_\Omega(r,x)=
 \begin{cases}
 \widehat\varphi_a(r,x)^2,&x\in\Omega,\\
 0,&x\notin\Omega.
 \end{cases}
\]
By \Cref{lem:weak-zero-extension}, $\Phi_\Omega$ is nonnegative and weakly subharmonic on the smooth cone $(0,\infty)\times M$, and
\[
 \Delta_{\widehat M}\Phi_\Omega
 =2|\widehat\nabla\widehat\varphi_a|^2
 \mathbf 1_{\widehat\Omega}\dd V_{\widehat M}.
\]

We next explain why the mean-value monotonicity in
\Cref{lem:mean-value-general} remains valid for this weakly subharmonic function.  Write
$\mu_\Omega=\Delta_{\widehat M}\Phi_\Omega$, fix $z\in\R^{n+q+1}$, put $R=|F-z|$, and let
\[
 I_\epsilon(s)=
 \int_{\{R<s\}\cap\{r>\epsilon\}}
 \Phi_\Omega\dd V_{\widehat M}.
\]
We first justify the Green identity used below.  Choose regular values $\delta_j\downarrow0$ and apply the calculation to the truncations $\Phi_{\Omega,\delta_j}$ constructed in the proof of \Cref{lem:weak-zero-extension}.  These functions are $C^1$ and piecewise smooth; both their value and their normal derivative vanish on the additional free boundary.  Hence no lateral boundary term is produced.  As $j\to\infty$, the truncations converge to $\Phi_\Omega$ in $W^{1,2}_{\mathrm{loc}}$, while their distributional Laplacians converge weakly as Radon measures to $\mu_\Omega$.  Thus the resulting identities pass to the limit and give the distributional Green identity for $\Phi_\Omega$.

For a regular radius $s$, use the nonnegative test function
$\psi_s=(s^2-R^2)/2$ on $\{R<s\}\cap\{r>\epsilon\}$.  Since
$\Delta R^2=2d$, the distributional Green identity gives
\[
 \begin{aligned}
 dI_\epsilon(s)
 ={}&s\int_{\{R=s\}\cap\{r>\epsilon\}}
 \Phi_\Omega|\widehat\nabla R|\dd\sigma
 -\int_{\{R<s\}\cap\{r>\epsilon\}}
 \frac{s^2-R^2}{2}\dd\mu_\Omega
 +\mathcal E_\epsilon(s),
 \end{aligned}
\]
where $\mathcal E_\epsilon(s)$ is the sum of the two boundary terms on
$\{r=\epsilon\}$.  Coarea also gives
\[
 I_\epsilon'(s)=
 \int_{\{R=s\}\cap\{r>\epsilon\}}
 \frac{\Phi_\Omega}{|\widehat\nabla R|}\dd\sigma
\]
for almost every regular $s$.  Subtracting the preceding two identities and using
\[
 1-|\widehat\nabla R|^2
 =\frac{|(F-z)^\perp|^2}{s^2}
 \quad\text{on }\{R=s\}
\]
yields
\[
 \begin{aligned}
 \frac{\dd}{\dd s}\bigl(s^{-d}I_\epsilon(s)\bigr)
 ={}&s^{-d-1}\int_{\{R=s\}\cap\{r>\epsilon\}}
 \frac{\Phi_\Omega|(F-z)^\perp|^2}
 {s|\widehat\nabla R|}\dd\sigma\\
 &+s^{-d-1}\int_{\{R<s\}\cap\{r>\epsilon\}}
 \frac{s^2-R^2}{2}\dd\mu_\Omega
 -s^{-d-1}\mathcal E_\epsilon(s).
 \end{aligned}
\]
The first two terms on the right are nonnegative.  Moreover,
\[
 0\le\Phi_\Omega\le r^2,
 \qquad
 |\widehat\nabla\Phi_\Omega|
 \le2r
 \quad\text{almost everywhere},
\]
because $|\varphi_a|\le1$ and
$|\widehat\nabla\widehat\varphi_a|^2
 =\varphi_a^2+|\nabla\varphi_a|^2\le1$.
The two summands in $\mathcal E_\epsilon(s)$ contain, respectively,
$\Phi_\Omega\,\partial_\nu\psi_s$ and
$\psi_s\,\partial_\nu\Phi_\Omega$.  Since the measure of
$\{r=\epsilon\}$ is $\epsilon^n\Vol(M)$ and
$\psi_s,\partial_\nu\psi_s$ are uniformly bounded for fixed $s$, these summands are
$O(\epsilon^{n+2})$ and $O(\epsilon^{n+1})$.  Hence
$\mathcal E_\epsilon(s)\to0$.  Letting $\epsilon\downarrow0$ gives
\[
 \begin{aligned}
 \frac{\dd}{\dd s}
 \left(s^{-d}\int_{\{R<s\}}\Phi_\Omega\dd V_{\widehat M}\right)
 ={}&s^{-d-1}\int_{\{R=s\}}
 \frac{\Phi_\Omega|(F-z)^\perp|^2}
 {s|\widehat\nabla R|}\dd\sigma\\
 &+s^{-d-1}\int_{\{R<s\}}
 \frac{s^2-R^2}{2}\dd\mu_\Omega\ge0
 \end{aligned}
\]
for almost every regular $s$.  Local absolute continuity, obtained from coarea, extends the monotonicity to all $s>0$.

Since $\overline\Omega$ is compact in $M$, the continuous function $|\varphi_a|$ attains its maximum on $\overline\Omega$.  Moreover, $\varphi_a=0$ on $\partial\Omega$ and $\varphi_a$ is nonzero on $\Omega$, so this maximum is positive and is attained at an interior point $x_\Omega\in\Omega$.  Set
\[
 \alpha_\Omega
 =|\varphi_a(x_\Omega)|
 =\|\varphi_a\|_{L^\infty(\Omega)}>0,
 \qquad
 z=f(x_\Omega).
\]  Since $f$ is an immersion and $M$ is compact, the set
$f^{-1}(z)$ is finite.  The preimage of $z$ under the cone immersion is
$\{1\}\times f^{-1}(z)$.  Choose pairwise disjoint coordinate neighborhoods of these preimages on which $F$ is an embedding.  In normal coordinates on each such sheet,
$F=z+\dd F_0(\cdot)+O(|\cdot|^2)$, the volume density is
$1+O(|\cdot|^2)$, and $\Phi_\Omega$ converges to its value at the center.  The contribution of that sheet to the extrinsic ball is therefore
\[
 \beta_{n+1}s^{n+1}\Phi_\Omega(1,y)+o(s^{n+1}).
\]
For sufficiently small $s$, no point outside these neighborhoods maps into
$B_s(z)$.  Summing the local contributions gives
\[
 \lim_{s\downarrow0}
 \frac{1}{\beta_{n+1}s^{n+1}}
 \int_{\{|F-z|<s\}}\Phi_\Omega\dd V_{\widehat M}
 =\sum_{y\in f^{-1}(z)}\Phi_\Omega(1,y)
 \ge\alpha_\Omega^2,
\]
because the summand corresponding to $y=x_\Omega$ equals
$\alpha_\Omega^2$ and all other summands are nonnegative.  The preceding monotonicity therefore implies, for every $\rho>0$,
\[
 \alpha_\Omega^2
 \le
 \frac{1}{\beta_{n+1}\rho^{n+1}}
 \int_{\{|F-z|<\rho\}}\Phi_\Omega\dd V_{\widehat M}.
\]
If $|rf(x)-z|<\rho$, then $|r-1|<\rho$, hence
$0<r<1+\rho$.  Since
$\dd V_{\widehat M}=r^n\dd r\dd V_M$ and
$\Phi_\Omega=r^2\varphi_a^2$ on $\widehat\Omega$, we obtain
\[
 \begin{aligned}
 \int_{\{|F-z|<\rho\}}\Phi_\Omega\dd V_{\widehat M}
 &\le
 \int_\Omega\int_0^{1+\rho}
 r^{n+2}\varphi_a(x)^2\dd r\dd V\\
 &=\frac{(1+\rho)^{n+3}}{n+3}
 \int_\Omega\varphi_a^2\dd V.
 \end{aligned}
\]
Thus
\[
 \alpha_\Omega^2
 \le
 \frac{(1+\rho)^{n+3}}
 {(n+3)\beta_{n+1}\rho^{n+1}}
 \int_\Omega\varphi_a^2\dd V.
\]
The logarithmic derivative of
$(1+\rho)^{n+3}/\rho^{n+1}$ vanishes only at
$\rho=(n+1)/2$, where the global minimum is attained.  Substitution gives the stated value of $\Lambda_n$.
\end{proof}

\begin{proof}[\textbf{Proof of \Cref{thm:hyperplane-partition}}]
If $\varphi_a\equiv0$, then $f(M)\subset\mathcal H_a$ and
$\mathcal N_a=0$.  Assume henceforth that $\varphi_a$ is nontrivial and let
$\Omega$ be one of its nodal domains.  By
\Cref{lem:nodal-cone-estimate},
\[
 \alpha_\Omega^2
 \le\Lambda_n\int_\Omega\varphi_a^2\dd V.
\]
On the other hand,
\[
 \int_\Omega\varphi_a^2\dd V
 \le\alpha_\Omega^2\Vol(\Omega).
\]
Since $\alpha_\Omega>0$, cancellation yields
\[
 \Vol(\Omega)\ge\Lambda_n^{-1}.
\]
Using $\omega_n=(n+1)\beta_{n+1}$, we have
\[
 \Lambda_n^{-1}
 =\frac{4\beta_{n+1}(n+1)^{n+1}}{(n+3)^{n+2}}
 =\frac{4(n+1)^n}{(n+3)^{n+2}}\omega_n
 =b_n\omega_n.
\]
This proves the asserted lower bound for every nodal domain.

The nodal domains are pairwise disjoint.  Hence any collection of $k$ distinct nodal domains has total volume at least
$k b_n\omega_n$ and at most $\Vol(M)$.  It follows first that the number of nodal domains is finite and then that
\[
 \mathcal N_a
 \le b_n^{-1}\frac{\Vol(M)}{\omega_n}
 =\frac{(n+3)^{n+2}}{4(n+1)^n}
 \frac{\Vol(M)}{\omega_n}.
\]
Finally,
\[
 \frac1{b_n}
 =\frac{(n+3)^2}{4}
 \left(1+\frac2{n+1}\right)^n
 <\frac{e^2}{4}(n+3)^2,
\]
because $(1+x)^n<e^{nx}$ and $2n/(n+1)<2$.  This proves the final estimate.
\end{proof}
\markend

\appendix
\section{Explicit constants and asymptotics}\label{sec:constants}

\begin{proposition}\label{prop:epsilon-polynomial}
For every $n\ge2$,
\[
 \frac{1}{110n(n+2)^2}<\varepsilon_n<
 \frac{1}{104n(n+2)^2}.
\]
Consequently,
\[
 \varepsilon_n>\frac{1}{440n^3}.
\]
\end{proposition}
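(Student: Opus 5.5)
The plan is to compare $\varepsilon_n$ with its \emph{leading model}, the integral obtained by dropping the two trigonometric factors and the quadratic term of $P_n$. Rescaling $t=(\sigma_n/E_n)u$ turns $n\int_0^{\sigma_n/E_n}t(\sigma_n-E_nt)^2\dd t$ into
\[
 n\frac{\sigma_n^4}{E_n^2}\int_0^1u(1-u)^2\dd u
 =\frac{n\sigma_n^4}{12E_n^2}
 =\frac{1}{48E_n^2\,n(n+2)^2},
\]
using $\sigma_n^4=1/(4n^2(n+2)^2)$. So both inequalities reduce to two-sided control of $48E_n^2$ plus small relative corrections.

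\textbf{Step 1: control of $E_n$.} I will show that $\gamma_n$ is nondecreasing in $n$, with $\gamma_2=3/2$ and $\gamma_n\uparrow(1+\sqrt{17})/2$. For $n\ge3$ this is an elementary derivative check of $(n+3+\sqrt{17n^2-26n+9})/(2n)$, together with a comparison of $n=2$ and $n=3$. Since $L_n$, $D_n$ and $E_n$ are increasing functions of $\gamma_n$, this gives
\[
 E_2\approx1.4724\le E_n<E_\infty\approx1.5047,
 \qquad
 104<48E_2^2\le 48E_n^2<48E_\infty^2\approx108.7.
\]

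\textbf{Step 2: upper bound.} I will use $\cos\le1$, the fact that the sine quotient is at most $1$, and $P_n(t)\le\sigma_n-E_nt$. The last inequality also gives $r_n\le\sigma_n/E_n$, so on $[0,r_n]$ we have $0\le P_n\le\sigma_n-E_nt$. Hence $\varepsilon_n$ is at most the model integral, which equals $1/(48E_n^2n(n+2)^2)$. This is strictly less than $1/(104n(n+2)^2)$ by Step 1, because the dropped factors are strictly less than $1$ for $t>0$.

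\textbf{Step 3: lower bound, the main obstacle.} On $[0,r_n]$ I will use $t^2/(16n)\le r_nt/(16n)$, so that
\[
 P_n(t)\ge\sigma_n-E_n't,\qquad E_n'=E_n+\frac{r_n}{16n},
\]
and I will restrict the integral to $[0,\sigma_n/E_n']\subset[0,r_n]$. For the trigonometric factors I will use $\cos z\ge1-z^2/2$ and $(\sin z/z)^{n-1}\ge1-(n-1)z^2/6$. These give a single factor $1-c_nt^2$ with
\[
 c_n=\frac{3}{4n}+\frac{(n^2-1)}{12n}.
\]
Substituting $t=(\sigma_n/E_n')u$ yields
\[
 \varepsilon_n\ge\frac{n\sigma_n^4}{12E_n'^2}
 \left(1-c_n\frac{\sigma_n^2}{E_n'^2}\cdot\frac{\int_0^1u^3(1-u)^2\dd u}{\int_0^1u(1-u)^2\dd u}\right)
 =\frac{n\sigma_n^4}{12E_n'^2}\left(1-\frac{c_n\sigma_n^2}{5E_n'^2}\right).
\]
Here $c_n\sigma_n^2\le 1/24+O(n^{-2})$, so the relative loss is about $0.4\%$. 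The loss from $E_n'$ is $O(n^{-2})$, since $r_n\lesssim 1/n$. It then remains to check
\[
 48E_n'^2\Bigl(1-\frac{c_n\sigma_n^2}{5E_n'^2}\Bigr)^{-1}<110 .
\]
The margin is tightest for large $n$, where the left side tends to about $108.7(1+0.004)\approx109.1$. I expect this to be the delicate point. I would first verify it by monotone bounds valid for all $n$ at once, using $E_n<E_\infty$ and $c_n\sigma_n^2\le\frac{1}{24}+\frac{3}{8n^2(n+2)}$. If the uniform estimate is too crude for small $n$, I would treat $n=2,3$ (or a short finite range) by direct numerical evaluation of the explicit integral; at $n=2$ the slack ($104$ against $110$) is large.

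\textbf{Step 4: consequence.} The final claim follows from $n+2\le2n$ for $n\ge2$, which gives $110n(n+2)^2\le440n^3$.
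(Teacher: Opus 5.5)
Your proposal is correct and follows essentially the same route as the paper. The upper bound compares with the model integral $1/(48E_n^2n(n+2)^2)$ and uses $48E_n^2\ge48E_2^2>104$. The lower bound linearizes the quadratic term of $P_n$ with an enlarged slope, uses $\cos y\ge1-y^2/2$, $\sin y/y\ge1-y^2/6$ and Bernoulli to obtain the same $c_n=(n^2+8)/(12n)$, and evaluates the same Beta integrals. Your slope $E_n+r_n/(16n)$ is exactly the chord slope $\sigma_n/r_n$, because $P_n(r_n)=0$. This justifies $[0,\sigma_n/E_n']=[0,r_n]$ and is slightly sharper than the paper's $E_n+\frac1{160}$.

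One bookkeeping slip: $c_n\sigma_n^2=(n^2+8)/(24n^2(n+2))\le\frac1{32}$ decays like $1/(24n)$, so the relative loss tends to $0$ rather than staying near $0.4\%$. With this bound, a single uniform estimate, as in the paper, gives the constant $110$ for all $n\ge2$ without a separate numerical check of small $n$.
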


\begin{proof}
For the upper bound, note that
$P_n(t)\le\sigma_n-E_nt$, both geometric factors in
\eqref{eq:epsilon-definition} are at most $1$, and
$r_n<\sigma_n/E_n$.  Therefore
\[
 \varepsilon_n<
 n\int_0^{\sigma_n/E_n}t(\sigma_n-E_nt)^2\dd t
 =\frac{1}{48E_n^2n(n+2)^2}.
\]
The smallest value of $L_n$ occurs at $n=2$ and equals $29/2$.  Hence
\[
 E_n\ge\frac{1+\sqrt{29}+\sqrt6}{6}.
\]
Since $\sqrt{174}>211/16$,
\[
 (\sqrt{29}+\sqrt6)^2
 =35+2\sqrt{174}>\frac{491}{8}>
 \left(\frac{47}{6}\right)^2.
\]
Thus $E_n>53/36$, and
$48E_n^2>2809/27>104$.

For the lower bound, first observe that
\[
 \gamma_n<\frac{41}{16}
\]
for all $n\ge2$.  This is immediate for $n=2$.  For $n\ge3$, the inequality is equivalent to
\[
 8\sqrt{17n^2-26n+9}<33n-24;
\]
after squaring, the difference between the right and left sides is
$n^2+80n>0$.  Hence $L_n<249/16$.  It follows that
\[
 D_n<\frac{257}{64},
 \qquad
 E_n<\frac{289}{192}.
\]
Indeed,
\[
 D_n^2<\frac{297}{32}+\frac{\sqrt{747}}4
 <\frac{129}{8}<\left(\frac{257}{64}\right)^2.
\]

Put
\[
 \widehat E_n=E_n+\frac1{160},
 \qquad
 t_n=\frac{\sigma_n}{\widehat E_n}.
\]
The preceding estimates give
\[
 \frac{47}{32}<\widehat E_n<\frac{1451}{960}.
\]
Moreover, $\widehat E_n>5/4$ and
\[
 t_n<\frac45\sigma_n\le\frac n{10};
\]
the last inequality is equivalent to
$32\le n^3(n+2)$.  Therefore, for $0\le t\le t_n$,
\[
 \frac{t^2}{16n}\le\frac t{160},
 \qquad
 P_n(t)\ge\sigma_n-\widehat E_nt.
\]
In particular, $P_n(t_n)\ge0$.  Since $P_n$ is strictly decreasing on $[0,\infty)$ and $r_n$ is its unique positive zero, it follows that $t_n\le r_n$.
All arguments of the sine and cosine below are smaller than $1/4$.  The elementary inequalities
$\cos y\ge1-y^2/2$ and
$\sin y/y\ge1-y^2/6$, followed by Bernoulli's inequality, give
\[
 \cos\!\left(\sqrt{\frac{3}{2n}}t\right)
 \left[
 \frac{\sin\!\left(\sqrt{\frac{n+1}{2n}}t\right)}
 {\sqrt{\frac{n+1}{2n}}t}
 \right]^{n-1}
 \ge1-c_nt^2,
\]
where
\[
 c_n=\frac{n^2+8}{12n}.
\]
Consequently,
\begin{align*}
 \varepsilon_n
 &\ge n\int_0^{t_n}
 t(\sigma_n-\widehat E_nt)^2(1-c_nt^2)\dd t\\
 &=\frac{1}{48\widehat E_n^2n(n+2)^2}
 \left(1-\frac{c_n\sigma_n^2}{5\widehat E_n^2}\right).
\end{align*}
A direct calculation gives
\[
 c_n\sigma_n^2
 =\frac{n^2+8}{24n^2(n+2)}\le\frac1{32}.
\]
Using the bounds for $\widehat E_n$,
\[
 1-\frac{c_n\sigma_n^2}{5\widehat E_n^2}
 >1-\frac{32}{11045}=
 \frac{11013}{11045}.
\]
Therefore the coefficient multiplying
$[n(n+2)^2]^{-1}$ is larger than
\[
 \frac{11013}{11045}\frac{1}{48(1451/960)^2}
 =\frac{42289920}{4650830809}>\frac1{110}.
\]
This proves the lower bound.  Finally,
$n+2\le2n$ gives the cubic estimate.
\end{proof}

\begin{proposition}\label{prop:epsilon-asymptotic}
One has
\[
 \lim_{n\to\infty}n^3\varepsilon_n
 =C_\infty
 =\frac{1}{48E_\infty^2}
 \approx0.00920130564911321,
\]
where
\[
 \gamma_\infty=\frac{1+\sqrt{17}}2,
 \qquad
 L_\infty=13+\gamma_\infty,
\]
\[
 D_\infty=\frac{\sqrt{L_\infty}+\sqrt3}{\sqrt2},
 \qquad
 E_\infty=\frac{1+2D_\infty}{6}.
\]
\end{proposition}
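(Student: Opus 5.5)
The plan is to scale the integral defining $\varepsilon_n$ by $t=\tau/n$ and pass to the limit by dominated convergence; the geometric factors should disappear, leaving a pure polynomial integral.

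\textbf{Limits of the constants.} First I record the behavior of the constants as $n\to\infty$. The formula for $\gamma_n$ gives
\[
\gamma_n=\frac{n+3+\sqrt{17n^2-26n+9}}{2n}\longrightarrow\frac{1+\sqrt{17}}{2}=\gamma_\infty .
\]
Hence $L_n\to L_\infty$, $D_n\to D_\infty$ and $E_n\to E_\infty$. We also have $\sigma_n=(2n(n+2))^{-1/2}$, so $n\sigma_n\to 1/\sqrt2$. Next, $r_n=2\sigma_n/\bigl(E_n+\sqrt{E_n^2+\sigma_n/(4n)}\bigr)$, which gives $nr_n\to 1/(\sqrt2E_\infty)$.

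\textbf{Rescaling.} I substitute $t=\tau/n$ in \eqref{eq:epsilon-definition}. Then
\[
n^3\varepsilon_n=\int_0^{nr_n}\tau\,\bigl(nP_n(\tau/n)\bigr)^2\,G_n(\tau)\,d\tau,
\]
where $G_n$ is the product of the cosine factor and the sine-quotient power. The polynomial factor rescales as
\[
nP_n(\tau/n)=n\sigma_n-E_n\tau-\frac{\tau^2}{16n^2},
\]
which converges uniformly on bounded sets to $2^{-1/2}-E_\infty\tau$. For the geometric factor, the arguments are $\sqrt{3/(2n)}\,\tau/n=O(n^{-3/2})$ and $\sqrt{(n+1)/(2n)}\,\tau/n=O(n^{-1})$ on the bounded range $\tau\le nr_n$. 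The sine quotient is therefore $1-O(n^{-2})$, and raising it to the power $n-1$ still gives $1-O(n^{-1})$. So $G_n\to1$ uniformly, using the same elementary bound $1-c_nt^2$ from the proof of \Cref{prop:epsilon-polynomial}, with $c_nt^2=O(n\cdot n^{-2})$.

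\textbf{Passing to the limit.} The integrand is bounded by a constant on a uniformly bounded interval, so dominated convergence applies. The moving endpoint converges to $\tau_\infty=1/(\sqrt2E_\infty)$, which causes no difficulty. The limit is
\[
\int_0^{\tau_\infty}\tau\Bigl(\tfrac1{\sqrt2}-E_\infty\tau\Bigr)^2d\tau .
\]
With $a=1/\sqrt2$ and $b=E_\infty$, this equals
\[
\frac{a^4}{b^2}\int_0^1u(1-u)^2\,du=\frac{a^4}{12b^2}=\frac{1}{48E_\infty^2}.
\]
Finally, I evaluate the constant numerically. We get $\gamma_\infty\approx2.5616$ and $L_\infty\approx15.5616$, so $\sqrt{L_\infty}\approx3.9448$. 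Then $D_\infty\approx(3.9448+1.7321)/1.41421\approx4.0142$ and $E_\infty\approx1.5047$. This gives $48E_\infty^2\approx108.68$, so $C_\infty\approx0.0092$, consistent with the stated value.

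The main obstacle is the uniformity of $G_n\to1$: the exponent $n-1$ grows while the argument shrinks. The estimate $(\sin y/y)^{n-1}\ge 1-(n-1)y^2/6$ with $y=O(1/n)$ handles it.
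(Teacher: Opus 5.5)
Your proposal is correct and follows the paper's own proof. The paper uses the same steps: the substitution $t=\tau/n$, the limits $n\sigma_n\to 1/\sqrt2$, $nr_n\to 1/(\sqrt2E_\infty)$ and $nP_n(\tau/n)\to 1/\sqrt2-E_\infty\tau$, uniform convergence of the trigonometric factors to $1$, and dominated convergence. Your two-sided bound $1-c_nt^2\le G_n\le 1$ simply makes explicit the uniformity that the paper states without detail.
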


\begin{proof}
Put $t=y/n$ in \eqref{eq:epsilon-definition}.  Then
\[
 n\sigma_n\longrightarrow\frac1{\sqrt2},
 \qquad
 nr_n\longrightarrow\frac1{\sqrt2E_\infty},
\]
and
\[
 nP_n(y/n)\longrightarrow
 \frac1{\sqrt2}-E_\infty y.
\]
The sine and cosine factors converge uniformly to $1$ on the rescaled integration interval.  Dominated convergence gives
\begin{align*}
 \lim_{n\to\infty}n^3\varepsilon_n
 &=\int_0^{1/(\sqrt2E_\infty)}
 y\left(\frac1{\sqrt2}-E_\infty y\right)^2\dd y\\
 &=\frac1{48E_\infty^2}.
\end{align*}
\end{proof}

The first numerical values of the two principal coefficients are
\[
\begin{array}{c|c|c}
 n&\varepsilon_n&\delta_n\\ \hline
 2&2.9813775716\times10^{-4}&2.0752596325\times10^{-5}\\
 3&1.2243274270\times10^{-4}&4.6413452902\times10^{-6}\\
 4&6.3822649593\times10^{-5}&1.5173252326\times10^{-6}\\
 5&3.7527417681\times10^{-5}&6.1093306075\times10^{-7}\\
 6&2.3948629062\times10^{-5}&2.8349972676\times10^{-7}\\
 10&6.3884106880\times10^{-6}&2.9913619295\times10^{-8}.
\end{array}
\]

\end{document}